\documentclass{amsart}

\usepackage{graphicx} 
\usepackage{amsmath}
\usepackage{amssymb}
\usepackage{amsfonts}
\usepackage{amsthm}
\usepackage{physics}
\usepackage{comment}
\usepackage[margin=29truemm]{geometry}
\usepackage{tikz}
\usepackage{enumitem}

\newcommand{\R}{\mathbb{R}}
\newcommand{\Z}{\mathbb{Z}}
\newcommand{\C}{\mathbb{C}}
\newcommand{\N}{\mathbb{N}}
\newcommand{\Q}{\mathbb{Q}}
\newcommand{\T}{\mathbb{T}}

\newcommand{\supp}{\mathrm{supp}\thinspace}
\newcommand{\F}{\mathcal{F}}

\newcommand{\limitsup}[2]{\underset{#1\to#2}{\limsup}\thinspace}

\newcommand{\vomega}{\vec{\omega}}

\usepackage{mathtools}

\theoremstyle{plain}
\newtheorem{theorem}{Theorem}
\newtheorem{corollary}{Corollary}

\newtheorem*{thm*}{Theorem}
\newtheorem*{lemma*}{Lemma}
\newtheorem{proposition}{Proposition}
\newtheorem{lemma}{Lemma}
\newtheorem{problem}{Problem}
\newtheorem{observation}{Observation}
\theoremstyle{definition}
\newtheorem{definition}{Definition}

\newtheorem{remark}{Remark}

\title[Strichartz estimates: An arithmetic approach]{Strichartz estimates for quasi-periodic functions on long time intervals: An arithmetic approach}
\author{Kotaro Inami}
\date{\today}

\subjclass[2020]{42B37, 42A75, 11J68}
\keywords{Schr\"odinger equation, Strichartz estimate, Roth's theorem, Parsell--Vinogradov system}

\begin{document}

\begin{abstract}
  We study long-time Strichartz estimates for the one-dimensional Schr\"{o}dinger equation with quasi-periodic initial data. For two-frequency data with an algebraic frequency ratio, we observe that the behavior of the linear Schr\"{o}dinger evolution changes depending on the algebraic degree of the ratio. Making use of this observation, we improve the Strichartz estimates on long time intervals. We also prove an endpoint $L^4$ Strichartz estimate. Our proofs use Roth-type Diophantine inequalities and Vinogradov-type mean value estimates for the Parsell--Vinogradov systems. 
\end{abstract}

\maketitle

\section{Introduction}
In this paper, we study Strichartz-type estimates for the Schr\"{o}dinger equation:
\begin{equation}
  2\pi i\partial_{t}u(x,t) = \partial_{xx} u(x,t) \quad (x,t)\in \R \times \R. \label{eq:Schrodinger}
\end{equation}
We investigate this equation with an initial datum $f$ that is neither periodic nor decaying at $\pm\infty$, for example, $f(x) = \sin(x) + \sin(\sqrt{2}x)$. We consider this problem in the framework of \textit{quasi-periodic functions}. For the definition of them, we first define the \textit{non-resonant} vectors: 
\begin{definition}
    We say that a vector $\vec{\omega} = (\omega_1, \omega_2, \cdots, \omega_\nu)\in \R^\nu$ is \textit{non-resonant} if $\omega_1, \omega_2, \cdots, \omega_\nu$ are linearly independent over $\Q$. 
\end{definition}
Formally, quasi-periodic functions are functions of the following forms: for some non-resonant vector $\vomega\in\R^\nu$ and some sequence $(a_{k})_{k\in\Z^\nu}\subset\C$, 
\begin{equation}
  f(x) = \sum_{k\in \Z^\nu}e^{2\pi i(k\cdot\vomega)x}a_{k}\quad (x\in \R). 
  \label{eq:quasi_periodic_series}
\end{equation}
We can easily see that the example $\sin(x) + \sin(\sqrt{2}x)$ is of this form. The convergence of these series is understood with respect to certain $L^p$-type topologies. To make this precise, we introduce the set of quasi-periodic trigonometric polynomials.
\begin{definition}
    For non-resonant $\vec{\omega}\in \R^\nu$, we define the set of 
    trigonometric polynomials $\mathcal{T}_{\vec{\omega}}(\R)$ as
    \begin{equation*}
        \mathcal{T}_{\vec{\omega}}(\R):=\Bigl\{\sum_{k\in \Z^\nu}a_k e^{2\pi i(\vec{\omega} \cdot k)x} \;;\; only\; finitely\; many \; 
        coefficients \; a_k\;are \;nonzero\Bigr\}. 
    \end{equation*}
\end{definition}

Now, for $p\in [1,\infty)$ and for trigonometric polynomial $f\in \mathcal{T}_{\vomega}(\R)$, we define a norm $\|\cdot\|_{\mathcal{L}^p(\R)}$ by
\begin{equation*}
    \|f\|^p_{\mathcal{L}^p(\R)} := \limitsup{L}{\infty}\frac{1}{2L}\int^{L}_{-L}|f(x)|^pdx. 
\end{equation*}
For $p = \infty$, we define $\|\cdot\|_{\mathcal{L}^{\infty}}$ by $\|\cdot\|_{L^{\infty}}$. In this paper, 
we work with the spaces of quasi-periodic functions $\mathcal{B}^{p}_{\vec{\omega}}(\R)\; (p\in [1,\infty))$ 
as the completions of $\mathcal{T}_{\vec{\omega}}(\R)$ under the above norm for 
$p\in [1,\infty]$. 

The Fourier transform of a trigonometric polynomial is defined by 
\[
\widehat{f}(\lambda) = \F f(\lambda) := \lim_{L\to\infty}\frac{1}{2L}\int^{L}_{-L}f(x)e^{-2\pi i\lambda x} dx\quad (\lambda = k\cdot \vec{\omega}, \; k\in \Z^\nu)
\]
and the inverse Fourier transform is defined by 
\[
\F ^{-1}f(x) := \sum_{\substack{\lambda = k\cdot\vec{\omega}\\k\in \Z^\nu}}a_{\lambda}e^{2\pi i\lambda x}\quad (x\in \R). 
\]
These transforms are continuously extended to $\mathcal{B}^{p}_{\vec{\omega}}(\R)$. The solution to the Schr\"{o}dinger equation \eqref{eq:Schrodinger} with an initial datum $f\in \mathcal{B}^{2}_{\vec{\omega}}(\R)$ is formally written via the Fourier transform as 
\[e^{2\pi itD^{2}_{x}}f(x) 
= \sum_{k\in \Z^{\nu}} 
e^{2\pi i [(k\cdot \vec{\omega})x + |k\cdot \vec{\omega}|^2 t]}
\widehat{f}(k\cdot \vec{\omega}) \]
where $D_{x} := \frac{1}{2\pi i}\partial_{x}$. 

Now, we review some related works. Oh~\cite{Oh} proved local well-posedness for power-type nonlinear Schr\"{o}dinger equations with almost periodic initial data whose Fourier coefficients are absolutely summable; in particular, his result applies to our quasi-periodic initial data with $\ell^1$-summable Fourier coefficients. Furthermore, quasi-periodic Cauchy problems for nonlinear Schr\"{o}dinger equations \cite{Papenburg, Xu,damanik2024,damanik20242}, the KdV equation \cite{Tsugawa}, the Benjamin--Ono equation \cite{aitzhan2024}, and other nonlinear equations \cite{zhao2024,Ifrim} have been also studied. 

For the Strichartz estimates, Klaus~\cite{klaus2023} first introduced an estimate of this kind for Schr\"{o}dinger equation: 
\[
\|e^{2\pi itD^{2}_{x}}f(x)\|_{\mathcal{L}^{4}_{t,x}(\R^2)}\lesssim \|f(x)\|_{\mathcal{L}^2_{x}(\R)}
\]
where $\|g(t,x)\|^4_{\mathcal{L}^{4}_{t,x}(\R^2)}:= \limsup_{T\to\infty}\frac{1}{2T}\int^{T}_{-T}\limsup_{L\to\infty}\frac{1}{2L}\int^{L}_{-L}|g(t,x)|^4dxdt$. 
However, Klaus explained that this estimate does not imply the estimate 
\[
\Bigl\|\int^{T}_{-T}e^{2\pi i(t -s)D^{2}_{x}}F(s,x)ds\Bigr\|_{\mathcal{L}^{4}_{t,x}}\lesssim T\|F\|_{\mathcal{L}^{4/3}_{t,x}}
\] 
because the time variable is also averaged. Thus, this type of estimate is hard to apply to show the local well-posedness results for nonlinear equations. Schippa~\cite{Schippa_quasiperiodic} proved a different type of Strichartz estimates via Bourgain--Demeter's decoupling estimate \cite{Bourgain_Demeter_decoupling} or C\'{o}rdoba--Fefferman's reverse square function estimate \cite{Cordoba,Fefferman}. To state his results, we need to define the Sobolev-type spaces $\mathcal{H}^s_{\vec{\omega}}(\R)$ for quasi-periodic functions and a suitable decomposition in the frequency space: 
\begin{definition}
    Let $\vec{\omega} \in \R^\nu$ be non-resonant, $2\le \nu\in \N$, and $s\in\R$. For $f\in \mathcal{T}_{\vec{\omega}}(\R)$, 
    we define the norm by
    \[\|f\|^2_{\mathcal{H}^s_{\vec{\omega}}(\R)}:= \sum_{k\in \Z^\nu} (1 + |k|^2)^{s}|\widehat{f}(k\cdot \vec{\omega})|^2.\]
    The Sobolev-type space $\mathcal{H}^s_{\vec{\omega}}(\R)$ is defined as 
    the completion of $\mathcal{T}_{\vec{\omega}}(\R)$ under $\|\cdot\|_{\mathcal{H}^s_{\vec{\omega}}}$. 
\end{definition}

As pointed out by Schippa in \cite{Schippa_quasiperiodic}, there are two possible Littlewood--Paley-type projections for quasi-periodic functions. 
\begin{definition}
    Let $p\in [1,\infty)$ and $N, C > 0$. For any trigonometric polynomial 
    $f\in \mathcal{T}_{\vec{\omega}}(\R)$, the \textit{Frequency projection} $P_{N}$ is given by 
    \[P_N f(x) := \sum_{\substack{\lambda = k\cdot \vec{\omega}\\ k\in\Z^\nu, N/2 \leq|\lambda| < N}}\widehat{f}(\lambda)e^{2\pi i \lambda x}\quad (x\in \R) \]
    and the \textit{height projection} $R_{C}$ is given by 
    \[R_C f(x) = \sum_{\substack{\lambda = k \cdot \vec{\omega}\\ k\in \Z^\nu, C / 2\le |k| < C}}\widehat{f}(\lambda)e^{2\pi i \lambda x} \quad (x\in \R). \]
\end{definition}

Using the height projections $R_{C}$, the Sobolev-type norm $\|\cdot\|_{\mathcal{H}^{s}_{\vomega}}$ can be written as 
\[
\|f\|^2_{\mathcal{H}^{s}_{\vomega}(\R)}\sim \sum_{C:dyadic}C^{2s}\|R_{C}f\|^{2}_{\mathcal{L}^2(\R)}. 
\]

\begin{remark}
Note that the summation defining frequency projection $P_N$ is an infinite sum, whereas that defining $R_{C}$ is a finite sum. 
\end{remark}
Schippa's results are as follows: 
\begin{theorem}[Theorem 1.1. and the $1d$ version of Theorem 3.1. in \cite{Schippa_quasiperiodic}]
    Let $2\le \nu \in \N$, $p\in (2,\infty)$ and let $\vec{\omega}\in \R^\nu$ be non-resonant. 
    Then, for any $f\in \mathcal{H}^{s}_{\vec{\omega}}$, the following estimates hold: 
    \begin{equation}
      \|e^{2\pi itD^{2}_{x}}f(x)\|_{L^{4}_{t}\mathcal{L}^{4}_{x}([-1,1]\times \R)} \lesssim \|f\|_{\mathcal{H}^s_{\vec{\omega}}(\R)}\label{eq:Schippa_l4}
    \end{equation}
    with $s > \frac{\nu - 1}{4}$ and 
    \begin{equation}
      \|e^{2\pi itD^{2}_{x}}f(x)\|_{L^{p}_{t}\mathcal{L}^{p}_{x}([-1,1]\times \R)} \lesssim \|f\|_{\mathcal{H}^s_{\vec{\omega}}(\R)}\label{eq:Schippa_l6}
    \end{equation}
    with $s > (\nu - 1)\Bigl(\frac{1}{2} - \frac{1}{p}\Bigr) + s_{p}$
    where 
    \begin{align*}
      s_{p} := \begin{cases}
        0 &(2\leq p \leq 6),\\
        \frac{1}{2} - \frac{3}{p} &(p > 6). 
      \end{cases}
    \end{align*}
    \label{theorem:schippa_decoupling}
\end{theorem}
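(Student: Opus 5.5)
Fix $f\in\mathcal{T}_{\vec\omega}(\R)$ and reduce by Littlewood--Paley in height: write $f=\sum_{C\ \mathrm{dyadic}}R_Cf$. By the triangle inequality in $L^p_t\mathcal{L}^p_x$, it suffices to prove, for every $\varepsilon>0$,
\[
\|e^{2\pi itD_x^2}R_Cf\|_{L^p_t\mathcal{L}^p_x([-1,1]\times\R)}\lesssim_\varepsilon C^{s_0+\varepsilon}\|R_Cf\|_{\mathcal{L}^2},
\]
where $s_0=(\nu-1)(\frac12-\frac1p)+s_p$. Summing over dyadic $C$ with $s>s_0$ then gives the result.

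\textbf{Lifting to the torus.} The key observation is that $R_Cf$ is a finite sum over $k$ with $|k|\sim C$. Since $\vec\omega$ is non-resonant, Kronecker/Weyl equidistribution shows that the spatial $\mathcal{L}^p_x$ average equals the average over $\T^\nu$ of the lifted function
\[
F(t,y)=\sum_{|k|\sim C}a_k\,e^{2\pi i(k\cdot y+(k\cdot\vec\omega)^2t)}.
\]
Indeed, for fixed $t$, $x\mapsto F(t,x\vec\omega)$ is exactly $e^{2\pi itD_x^2}R_Cf(x)$, and $|F|^p$ is a trigonometric polynomial in $y$, so the Bohr mean equals $\int_{\T^\nu}|F(t,y)|^p\,dy$. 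Likewise $\|R_Cf\|_{\mathcal{L}^2}=\|a\|_{\ell^2}$. The problem therefore reduces to the estimate
\[
\|F\|_{L^p([-1,1]\times\T^\nu)}\lesssim C^{s_0+\varepsilon}\|a\|_{\ell^2}
\]
for exponential sums whose frequencies are the points $(k,(k\cdot\vec\omega)^2)\in\Z^\nu\times\R$, with $|k|\lesssim C$.

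\textbf{Decoupling step.} Decompose $k$ according to $\lambda=k\cdot\vec\omega$ into slabs of unit width, $\lambda\in[j,j+1)$, with $|j|\lesssim C$. On each slab the time frequency $\lambda^2$ lies within $O(|j|)$ of $j^2$; after rescaling, the points $(\lambda,\lambda^2)$ lie near a parabola at scale adapted to time $1$. Apply Bourgain--Demeter $\ell^2$ decoupling for the parabola in the $(x,t)$ variables. This is justified by first passing back from $\T^\nu$ to $x\in\R$ with a weight on $[-L,L]$, averaging, and using that the decoupling constant is independent of the translation. The result, up to $C^{\varepsilon}$ losses, bounds the norm by
\[
\Bigl(\sum_j\|F_j\|_{L^p}^2\Bigr)^{1/2}\times\begin{cases}1, & p\le 6,\\ (\#\text{caps})^{\frac12-\frac3p}, & p>6.\end{cases}
\]
With $\#\text{caps}\sim C$, the case $p>6$ produces the factor $C^{s_p}$. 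For $p=4$ one may instead use the C\'ordoba--Fefferman reverse square function estimate, which avoids the $\varepsilon$ in the cap count.

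\textbf{Bounding each piece.} Each $F_j$ contains the $k$ with $|k|\sim C$ and $k\cdot\vec\omega\in[j,j+1)$; there are roughly $C^{\nu-1}$ of them. Over time $|t|\le1$ the phase varies only by $O(|j|)$ across a slab, but in the worst case we bound trivially: Hölder/Bernstein on $\T^\nu$ gives $\|F_j\|_{L^p}\le\|F_j\|_{L^\infty}^{1-2/p}\|F_j\|_{L^2}^{2/p}\le (C^{\nu-1})^{\frac12-\frac1p}\|a_j\|_{\ell^2}$. This is exactly the factor $(\nu-1)(\frac12-\frac1p)$. Combining the pieces and using $\sum_j\|a_j\|^2=\|a\|^2$ completes the estimate.

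\textbf{Main obstacle.} I expect the hardest step to be rigorously justifying decoupling in the quasi-periodic setting. The frequencies $(\lambda,\lambda^2)$ are dense in the parabola, and the spatial norm is a Bohr mean rather than an integral over a compact domain. I would handle this by applying decoupling on $[-L,L]\times[-1,1]$ with $L\gg C^{2}$, using weights adapted to $1$-neighborhoods of the parabola after the parabolic rescaling $x\mapsto Cx$, and then letting $L\to\infty$ with uniform constants.
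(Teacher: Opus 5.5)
Your proposal is correct and follows essentially the argument the paper attributes to Schippa, which the paper also carries out in Section~6 for the $L^4$ endpoint: decouple (or use C\'ordoba--Fefferman when $p=4$) into unit-width strips in $\lambda=k\cdot\vec{\omega}$ at rescaled scale $\delta\sim C^{-2}$, which produces the loss $C^{s_p}$, and then apply Bernstein with the $\lesssim C^{\nu-1}$ lattice-point count per strip. The differences are cosmetic: you skip the frequency projections $P_N$, which is harmless because the parabola has uniform curvature, and your claim that $|F|^p$ is a trigonometric polynomial fails for non-even $p$, but the Kronecker--Weyl lemma (Lemma~\ref{lemma:kronecker_weyl}) already covers general $p$.
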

Using the height projections $R_{C}$, these estimates can be written as
\[
\|e^{2\pi itD^{2}_{x}}R_{C}f(x)\|_{L^{4}_{t}\mathcal{L}^{4}_{x}([-1,1]\times \R)} \lesssim C^{\frac{\nu - 1}{4}}\|R_{C}f\|_{\mathcal{L}^{2}(\R)}
\]
and 
\[
\|e^{2\pi itD^{2}_{x}}R_{C}f(x)\|_{L^{p}_{t}\mathcal{L}^{p}_{x}([-1,1]\times \R)} \lesssim C^{(\nu - 1)\bigl(\frac{1}{2} - \frac{1}{p}\bigr) + s_{p}}\|R_{C}f\|_{\mathcal{L}^{2}(\R)}. 
\]
The first inequality \eqref{eq:Schippa_l4} was proved via C\'{o}rdoba--Fefferman's reverse square function estimate while the second one was proved via Bourgain--Demeter's decoupling estimate. Hence, the loss $s_{p}$ in \eqref{eq:Schippa_l6} comes from the decoupling inequality. 
The factors $C^{\frac{\nu -1}{4}}$ and $C^{(\nu -1 )(1/2 - 1/p)}$ come from the bound for the number of lattice points that are contained in 
\[\{k\in \Z^\nu \;  ; \; |k \cdot \vec{\omega}| \in [0,1], \; |k|\sim C\}. \]
The cardinality of this set is bounded by $\lesssim C^{\nu -1}$ because the set 
can be covered by a rectangle with scale 
$1 \times \underbrace{C \times \cdots \times C}_{\nu -1}$ 
and the number of lattice points contained in the rectangle is less than $C^{\nu -1}$. 
In \cite{Schippa_quasiperiodic}, this estimate was proved to be sharp up to 
endpoints when $(\nu, d, p) = (2, 1, 4), (2,2,4), (2,1,6)$. 

In this paper, we further investigate Schippa-type Strichartz estimates. First of all, we consider the following problem:
\begin{problem}
  Let $C \geq 1$ and $T \geq 1$. Suppose that $\omega\in \R\backslash\Q$ and $\vomega = (1, \omega)\in\R^2$. Then what is the best constant for the inequality
  \[
  \|e^{2\pi itD^{2}_{x}}R_{C}f(x)\|_{L^{p}_{t}\mathcal{L}^{p}_{x}([0,T]\times \R)} \lesssim ST(T,C,p)\|R_{C}f\|_{\mathcal{L}^{2}(\R)}? 
  \]
\end{problem}
Trivially, from Theorem \ref{theorem:schippa_decoupling}, it follows that
 \[\|e^{2\pi itD^{2}_{x}}R_Cf(x)\|_{L^{p}_{t}\mathcal{L}^{p}_{x}([-T,T]\times \R)} \lesssim T^{\frac 1p} C^{s}\|R_Cf\|_{\mathcal{L}^2_{x}(\R)}\]
for $\nu = 2$, $p\geq 6$, $T\ge 1$ and $s > (1/2 - 1/p) + s_{p}$. Thus, we shall ask whether Strichartz estimates with constants that are smaller than $T^{\frac 1p}C^s$ can hold. 

To motivate this problem, we present a heuristic observation that is similar to the one in \cite{Deng_Germain_Guth_irrational} for the Schr\"{o}dinger equation on irrational tori: 
\begin{observation}
    Let $\nu = 2$ and $\vec{\omega} = (1,\sqrt{2})$. Then
    the solution $e^{2\pi itD^{2}_{x}}R_C f(x)$ is "almost" periodic in time with
    periodicity near $C^{2}$. 
\end{observation}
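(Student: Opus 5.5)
The plan is to exhibit the approximate period directly. For $k=(k_1,k_2)\in\Z^2$ with $C/2\le|k|<C$ we have $\lambda=k_1+\sqrt2 k_2$, and the time phase of the corresponding mode is
\[
\lambda^2=(k_1^2+2k_2^2)+2\sqrt2\,k_1k_2 .
\]
The first term is an integer, so it is $1$-periodic in $t$. All the non-periodicity therefore comes from $2\sqrt2\,k_1k_2\,t$. We look for a time $T_0\sim C^2$ for which $T_0$ is an integer and $2\sqrt2\,T_0$ is very close to an integer.

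To find it, we take the continued fraction convergents $p/q$ of $2\sqrt2$. Since $2\sqrt2$ is a quadratic irrational, it is badly approximable, and the convergents satisfy $|q\cdot 2\sqrt2-p|\sim 1/q$. We choose the convergent with $q\sim C^2$, using that consecutive denominators grow by a bounded factor (the partial quotients are bounded). Setting $T_0=q$ gives
\[
\bigl|2\sqrt2\,k_1k_2 T_0-k_1k_2p\bigr|\lesssim C^2/q\sim 1,
\]
and we can make the constant as small as we like by taking $q=AC^2$ with $A$ large. Then for $t\in\R$,
\[
e^{2\pi i\lambda^2(t+T_0)}=e^{2\pi i\lambda^2 t}\,e^{2\pi i\theta_k},\qquad |\theta_k|\le \varepsilon ,
\]
uniformly over the support of $R_Cf$.

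Hence $e^{2\pi i(t+T_0)D_x^2}R_Cf$ differs from $e^{2\pi itD_x^2}R_Cf$ by the multiplier $e^{2\pi i\theta_k}-1=O(\varepsilon)$. By Plancherel in $\mathcal{L}^2_x$ (Parseval for the Besicovitch norm), the $\mathcal L^2$ norm of the difference is $\lesssim\varepsilon\|R_Cf\|_{\mathcal{L}^2}$. This is the precise sense of ``almost periodic in time with period $\sim C^2$''. Iterating over $m$ periods, the error after time $mT_0$ grows like $m\varepsilon$, so the solution stays close to periodic for many periods.

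The main obstacle is not the approximation itself but the quantitative one-sidedness. Showing that the period cannot be much smaller than $C^2$ (so that $C^2$ is the true scale) uses the lower bound $\|q\sqrt2\|\gtrsim1/q$: for $T\ll C^2$ the phases $2\sqrt2k_1k_2T$ modulo $1$ genuinely spread. Since this is an observation rather than a formal claim, I would record the upper construction rigorously and only sketch the sharpness. Both steps rest on Roth/Liouville-type bounds, the degree-two case of the Diophantine inequalities the paper later exploits.
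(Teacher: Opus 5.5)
Your argument is correct and is essentially the paper's. Both split $\lambda^2=(k_1^2+2k_2^2)+2\sqrt2\,k_1k_2$ into an integer part and an irrational part, and take $t=q$ to be a good denominator for $\sqrt2$, so that $\|q\lambda^2\|\le |k_1k_2|\,|2\sqrt2\,q-p|$ is small uniformly for $|k|\sim C$. The differences are quantitative: the paper takes $q\sim C^{2+\varepsilon}$ and gets an error $\lesssim C^{-\varepsilon}$, whereas you use the bounded partial quotients of the quadratic irrational $2\sqrt2$ to pick $q\sim AC^2$ with error $O(1/A)$, which removes the $C^{\varepsilon}$ loss in the period. You also make ``almost periodic'' precise via Parseval in $\mathcal{L}^2$, which the paper leaves heuristic.
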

Fix $\varepsilon>0$. We shall use the classical Diophantine approximation
of $\sqrt{2}$ to explain this observation: there exist $p\in \Z$ and $q\in \N$ with $q\sim C^{2 + \varepsilon}$
 such that $\Bigl|q\sqrt{2} - p \Bigr| \leq q^{-1}\sim  C^{- 2 - \varepsilon}$. For $x\in \R$, let $\|x\|$ denote
 the distance to the closest integer. Then, for $k_1, k_2\in \Z$ with $|k_1|, |k_2| \sim C$, 
 \begin{align*}
    \|q|k_1 + k_2 \sqrt{2}|^2\| &= \|q[(k^2_1 + k^2_2) + 2k_1 k_2 \sqrt{2}]\|\\
    &= \|2qk_1 k_2 \sqrt{2}\|\\
    &\leq |2k_1 k_2 (q\sqrt{2} - p)|\\
    &\lesssim C^{2}C^{-2 - \varepsilon} = C^{-\varepsilon} \ll 1
 \end{align*}
 for sufficiently large $C > 0$. This suggests that, when $t \sim C^{2 + \varepsilon}$
 the phase $t\lambda^2$ is very close to an integer. Thus, we heuristically obtain
 \[e^{2\pi itD^{2}_{x}} R_Cf(x) \sim R_Cf(x)\]
 when $t \sim C^{2 + \varepsilon}$.  

This observation suggests the following claim: for $T > C^{2 + \varepsilon}$ and $p > 6$, we divide 
 the time interval $[0,T]$ into $T / C^{2 + \varepsilon}$ subintervals of length $\sim C^{2 + \varepsilon}$ and 
 formally apply the Strichartz estimate \eqref{eq:Schippa_l6} on each subinterval, then we expect that
 \[\|e^{2\pi it D^{2}_{x}}R_C f(x)\|_{L^{p}_{t}\mathcal{L}^{p}_{x}([0,T]\times\R)} \lesssim_{\varepsilon} \Bigl(\frac{T}{C^{2 + \varepsilon}}\Bigr)^{\frac 1p} C^{1 - \frac 4p + \varepsilon}\|R_C f(x)\|_{\mathcal{L}^{2}_{x}(\R)} . \]

We first state the result when $\omega$ is an algebraic real number of degree $2$. In this case, we attain the estimate
with the same power loss as in the observation. 
\begin{theorem}
    Let $\varepsilon > 0$, $\nu = 2$, and let $\omega\in \R$ be an algebraic irrational number of degree $2$. Suppose that $\vec{\omega} = (1, \omega)$.  
    Then, for $p \geq 6$ and $T > C^{2 + \varepsilon}$, we have
    \begin{equation}
        \|e^{2\pi i tD^{2}_{x}}R_Cf(x)\|_{L^p_{t}\mathcal{L}^{p}_{x}([0,T]\times \R)} 
        \lesssim_{\varepsilon} \Bigl\{1 +\Bigl(\frac{T}{C^{2}}\Bigr)^{\frac 1p}\Bigr\}
        C^{1 - \frac{4}{p} + \varepsilon}\|R_Cf(x)\|_{\mathcal{L}^{2}_{x}(\R)}
        \label{eq:main_thm_degree2}
    \end{equation}
    \label{theorem:long_time_Strchartz}
\end{theorem}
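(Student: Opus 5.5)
We reduce the estimate to a lattice point count, handled separately for the two ranges of $p$. Write $f = R_C f = \sum_{|k|\sim C} a_k e^{2\pi i (k\cdot\vomega)x}$ with $k=(k_1,k_2)$ and $\lambda_k = k_1 + k_2\omega$. Since $\omega$ is irrational, the frequencies $\lambda_k$ are distinct. For the $x$-average this gives
\[
\|e^{2\pi i tD_x^2}f\|_{\mathcal{L}^p_x}^p
\]
in terms of exact Diophantine equations in the $\lambda_k$. This is cleanest for even $p=2m$. We first prove the case $p=6$ and the case of even $p$ large, then interpolate: the exponent $1-\frac4p$ and the factor $T^{1/p}$ are both compatible with Hölder/log-convexity between $p=6$ and $p=\infty$. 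At $p=\infty$ the trivial bound $\|e^{2\pi itD_x^2}f\|_{L^\infty}\le \sum|a_k|\lesssim C\|f\|_{\mathcal{L}^2}$ holds, since there are $\sim C^2$ terms.

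For $p=6$, we expand $\frac1T\int_0^T\limsup_L\frac1{2L}\int_{-L}^L|u|^6$. The $x$-average forces $\lambda_{k^1}+\lambda_{k^2}+\lambda_{k^3}=\lambda_{k^4}+\lambda_{k^5}+\lambda_{k^6}$. Because $1,\omega$ are $\Q$-independent, this is the linear system $\sum k^j_1=\sum k^{j+3}_1$ and $\sum k^j_2=\sum k^{j+3}_2$. The $t$-integral gives the weight
\[
\min\bigl(1,(T\,|\Lambda|)^{-1}\bigr),\qquad \Lambda:=\sum_{j\le3}\lambda_{k^j}^2-\sum_{j>3}\lambda_{k^j}^2 .
\]
Since $\omega$ has degree $2$, we have $\omega^2 = a\omega+b$ with $a,b\in\Q$. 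Hence $\Lambda = Q_1(k)+Q_2(k)\omega$, where $Q_1,Q_2$ are integer quadratic forms with bounded denominators, and $|Q_i|\lesssim C^2$.

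The key arithmetic input is Liouville's theorem for a quadratic irrational: $|m+n\omega|\gtrsim 1/|n|$ for $(m,n)\ne 0$. So either $\Lambda=0$, which by independence means $Q_1=Q_2=0$, or $|\Lambda|\gtrsim C^{-2}$. For $T>C^{2}$ the nonzero-$\Lambda$ terms therefore carry weight $\le C^2/T$. We split the sum accordingly.

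\emph{Resonant part} ($\Lambda=0$). This is a Parsell--Vinogradov type system in the variables $k^j\in\Z^2$: the linear equations together with the quadratic equations $Q_1=Q_2=0$. One checks that these contain $\sum (k_1^{j})^2+b(k_2^j)^2$ and $\sum k_1^jk_2^j$ plus $a\sum(k_2^j)^2$ (balanced). We bound its weighted count by $C^{\varepsilon}\|a\|_{\ell^2}^6\cdot C^{2}$, using the mean value estimate for the Parsell--Vinogradov system of degree $2$ in dimension $2$ with $s=3$, which is subcritical. Equivalently, we apply the decoupling bound underlying Theorem~\ref{theorem:schippa_decoupling} to the surface $(\xi,\eta)\mapsto(\xi,\eta,\xi^2+b\eta^2,\xi\eta)$. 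This yields $C^{1-4/6+\varepsilon}$ at $p=6$, i.e.\ the $1$ term in \eqref{eq:main_thm_degree2}.

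\emph{Non-resonant part.} Here we use the weight $C^2/T$ together with a count of all solutions of the linear system, which is $\lesssim C^{4}\|a\|_{\ell^2}^6$ in the worst case. We refine this by dyadically pigeonholing $|\Lambda|\sim 2^j C^{-2}$: the weight is $(T2^jC^{-2})^{-1}$, and the number of $k$ with $|Q_1+Q_2\omega|\lesssim 2^jC^{-2}$ and $Q_2$ fixed is controlled since $Q_1$ is then determined up to $O(1)$. This should produce $\frac{T}{C^2}$ times the time-$C^2$ bound, i.e.\ the $(T/C^2)^{1/p}$ factor, mirroring the heuristic of splitting $[0,T]$ into $T/C^2$ periods. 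The main obstacle is making this counting sharp up to $C^\varepsilon$. We would first try to reduce it to the resonant-system count with $Q_2$ shifted, and bound it by the same Parsell--Vinogradov mean value estimate uniformly over shifts, which follows by the triangle inequality in the exponential-sum formulation.

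Finally, for $p>6$ we interpolate the $p=6$ bound with the $L^\infty$ bound above. This gives
\[
\bigl(1+(T/C^2)\bigr)^{1/p}\,C^{(1/3)(6/p)+(1-6/p)+\varepsilon} = \bigl(1+(T/C^2)\bigr)^{1/p}\,C^{1-4/p+\varepsilon},
\]
which is exactly \eqref{eq:main_thm_degree2}.
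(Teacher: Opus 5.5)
There is a genuine gap: the two counting estimates that carry the proof are missing, and the bookkeeping around them is reversed. The resonant terms ($\Lambda=0$) carry the full time weight: $T$ for $\int_0^T$, i.e.\ weight $1$ in your normalization $\frac1T\int_0^T$. For a single exponential (Example (a)) the whole norm $T^{1/6}\|f\|_{\mathcal{L}^2}$ comes from these terms, so for $T\gg C^{2+6\varepsilon}$ they can never be absorbed into the ``$1$'' term $C^{1/3+\varepsilon}$. They are the source of the $(T/C^2)^{1/6}$ term, and for that you need $\sum_{\Lambda=0}|a_{k^1}\cdots a_{k^6}|\lesssim_\varepsilon C^{\varepsilon}\|a\|_{\ell^2}^6$. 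Your bound $C^{2+\varepsilon}\|a\|_{\ell^2}^6$ is exactly what averaging the unit-time estimate of Theorem~\ref{theorem:schippa_decoupling} gives. Once the factor $T$ is restored, it yields only $T^{1/6}C^{1/3+\varepsilon}$, a loss of $C^{1/3}$. Conversely, the non-resonant terms have weight $\lesssim\min(T,|\Lambda|^{-1})\lesssim C^2$, which does not depend on $T$. They must therefore produce the ``$1$'' term $C^{2+\varepsilon}\|a\|_{\ell^2}^6$, not a factor $T/C^2$.

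The first missing ingredient is the fiber bound of Lemma~\ref{lemma:additive_energy}. For fixed $M$ and fixed exact $S$, there are $\lesssim_\varepsilon C^\varepsilon$ triples with $\lambda_2+\lambda_4+\lambda_6=M$ and $\lambda_2^2+\lambda_4^2+\lambda_6^2=S$. By AM--GM and Cauchy--Schwarz this gives $C^\varepsilon\|a\|_{\ell^2}^6$ on every fiber, resonant or shifted.

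The Parsell--Vinogradov mean value bound does not give this. Since $\omega^2=a\omega+b$ (your notation), the three equations $\sum\pm X^2$, $\sum\pm XY$, $\sum\pm Y^2$ collapse into two. So every PV solution solves your system but not conversely, and PV upper bounds do not control your count. The paper gets the fiber bound arithmetically: eliminate $\lambda_3$, factor $K_1^2+K_1K_2+K_2^2$ over the Eisenstein integers, and reduce the coupled equations to lattice points on an ellipse handled by Bombieri--Pila. A decoupling proof is conceivable, but it would have to use the correct surface $(\xi,\eta,\xi^2+b\eta^2,2\xi\eta+a\eta^2)$. This surface is a linear image of the product of parabolas $(u,v,u^2,v^2)$ via $u=\lambda$, $v=\lambda'$ (the Galois conjugate), and you have not set any of this up.

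The second missing ingredient is the count of values of $\Lambda$. Your count (``fix $Q_2$, then $Q_1$ is determined up to $O(1)$'') allows $O(C^2)$ values in every dyadic shell. The shell $|\Lambda|\sim C^{-2}$, of weight $\sim C^2$, then already costs $C^{4+\varepsilon}\|a\|_{\ell^2}^6$. You need $\lesssim 1+AC^2$ values with $A\le|\Lambda|<2A$. This holds because two attainable values differ by $(m_1+m_2\omega)/D$, with $D$ a fixed denominator and $|m_i|\lesssim C^2$. Hence they differ by $\gtrsim C^{-2}$, by your own Liouville bound. This is \eqref{eq:Xbound}, and Liouville even removes the $\varepsilon$-loss that Roth incurs there.

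With both counts, the non-resonant part is $\sum_A A^{-1}\cdot AC^2\cdot C^{\varepsilon}\|a\|_{\ell^2}^6\lesssim C^{2+2\varepsilon}\|a\|_{\ell^2}^6$, and the resonant part is $TC^{\varepsilon}\|a\|_{\ell^2}^6$. Your direct treatment of $[0,T]$ and the interpolation with $p=\infty$ then go through. The paper instead proves the $L^6$ bound on $[0,C^2]$ and tiles $[0,T]$.
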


Next, we ask what happens if $\omega$ has degree greater than $2$. We demonstrate a heuristic observation
with $\vec{\omega} = (1, 2^{\frac 13})$.
In this case, the "period" in time $t$ might be different 
from the case of degree $2$ due to the independence of $1, 2^{\frac 13}, 2^{\frac 23}$. 

\begin{observation}
  Let $\nu = 2$ and $\vec{\omega} = (1, 2^{\frac 13})$. Then the solution
  $e^{2\pi i tD^{2}_{x}}R_{C}f(x)$ is "almost" periodic in time with periodicity
  near $C^4$
\end{observation}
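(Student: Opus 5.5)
The plan is to repeat the Diophantine argument used for the degree-$2$ observation, this time with \emph{simultaneous} approximation. We need this because the phase now involves the two numbers $\alpha:=2^{\frac13}$ and $\alpha^2=2^{\frac23}$, which together with $1$ are linearly independent over $\Q$. For $k_1,k_2\in\Z$ with $|k_1|,|k_2|\sim C$, expanding the square gives
\[
|k_1+k_2\alpha|^2 = k_1^2 + 2k_1k_2\,\alpha + k_2^2\,\alpha^2 .
\]
The term $k_1^2$ is an integer, so it does not affect $\|q|k_1+k_2\alpha|^2\|$ for any $q\in\N$. The remaining two terms involve $\alpha$ and $\alpha^2$ with integer coefficients of size $\lesssim C^2$. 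Hence, if both $\|q\alpha\|$ and $\|q\alpha^2\|$ are small, the triangle inequality together with $\|m x\|\le |m|\,\|x\|$ for $m\in\Z$ gives
\[
\|q|k_1+k_2\alpha|^2\| \le 2|k_1k_2|\,\|q\alpha\| + k_2^2\,\|q\alpha^2\| \lesssim C^2\max\bigl(\|q\alpha\|,\|q\alpha^2\|\bigr).
\]

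The key step is to produce such a $q$ using Dirichlet's simultaneous approximation theorem. By pigeonholing the points $(\{q\alpha\},\{q\alpha^2\})\in[0,1)^2$ for $q\le Q$ into $Q$ boxes of side $Q^{-1/2}$, we obtain some $q\le Q$ with $\max(\|q\alpha\|,\|q\alpha^2\|)\le Q^{-1/2}$. Taking $Q\sim C^{4+2\varepsilon}$ makes this maximum at most $C^{-2-\varepsilon}$. The bound above then gives $\|q\lambda^2\|\lesssim C^{-\varepsilon}\ll 1$ for every frequency $\lambda=k_1+k_2\alpha$ in the support of $R_C f$. Consequently $e^{2\pi i qD_x^2}R_Cf\approx R_Cf$, so the evolution almost recurs at times $q\sim C^{4+\varepsilon}$ (after adjusting $\varepsilon$).

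To see that $C^4$ is the right scale, and not merely an upper bound, I would add a converse heuristic. By Schmidt's subspace theorem, the higher-dimensional analogue of Roth's theorem, applied to the algebraic numbers $\alpha,\alpha^2$ with $1,\alpha,\alpha^2$ linearly independent over $\Q$, we have
\[
\max\bigl(\|q\alpha\|,\|q\alpha^2\|\bigr)\gtrsim_\varepsilon q^{-\frac12-\varepsilon}.
\]
Moreover, the coefficient pairs $(2k_1k_2,\,k_2^2)$ range over a set rich enough that smallness of $\|q\lambda^2\|$ for all $|k|\sim C$ essentially forces both $\|q\alpha\|$ and $\|q\alpha^2\|$ to be $\lesssim C^{-2}$. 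For instance, one can compare the frequencies $k_2$ and $k_2+1$ with $k_1$ fixed, and then vary $k_1$, to isolate each of the two terms. Combining the two facts yields $q\gtrsim C^{4-\varepsilon}$, so no substantially shorter almost-period exists.

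I expect the main obstacle to be this lower-bound direction: separating the contributions of $\alpha$ and $\alpha^2$ rigorously, and invoking the Schmidt-type inequality in the right form. The upper bound, which is the content of the observation as stated, is routine once simultaneous Dirichlet approximation is used.
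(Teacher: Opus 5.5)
Your argument is correct and is essentially the paper's: expand $|k_1+k_2\alpha|^2$ with $\alpha=2^{1/3}$, approximate $\alpha$ and $\alpha^2$ simultaneously to accuracy $C^{-2-\varepsilon}$, and conclude $\|q\lambda^2\|\lesssim C^{-\varepsilon}$ by the triangle inequality. Your Dirichlet scaling $Q\sim C^{4+2\varepsilon}$ is in fact the right one: the paper's stated range, $q\lesssim C^4$ with error $C^{-2-\varepsilon/2}$, is more than Dirichlet provides, and it fails for large $C$ because $(2^{1/3},2^{2/3})$ is a badly approximable pair.

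The Schmidt-based lower bound is an addition that the paper only asserts, as $C^{4-\varepsilon}\lesssim q$. If you pursue it, note that differencing alone turns $\sup_{|k|\sim C}\|q\lambda^2\|\le\eta$ into only $\|2q\alpha\|,\|2q\alpha^2\|\lesssim\eta$. Reaching the $\eta C^{-2}$ scale therefore needs an extra no-wrap-around argument, using that the values stay near a single integer along runs of length $\sim C$.
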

Fix $\varepsilon>0$. For $k_1, k_2\in \Z$ with $|k_1|, |k_2| \sim C$, it follows that
 \[|k_1 + k_2 2^{\frac 13}|^2 
 = k^2_1 + (2k_1 k_2) 2^{\frac 13} + k^2_2 2^{\frac 23}. \]
In this case, we have to approximate
$2^{\frac 13}$ and $2^{\frac 23}$ simultaneously. 
This leads us to use the next Diophantine approximation: 
there exist $p_1, p_2\in \Z$ 
and $q\in \Z$ with $C^{4-\varepsilon}\lesssim q \lesssim C^{4}$ such that 
\[|q2^{\frac 13} - p_1|, |q 2^{\frac 23} - p_2| \leq C^{-2 - \frac{\varepsilon}{2}}. \]
From this approximation, we obtain
\begin{align*}
  \|q(k_1 + 2^{\frac 13}k_2)^2\| &\leq |qk^2_2 2^{\frac 23} - k^2_2 p_1| 
  + |q(2k_1 k_2)2^{\frac 13} - (2k_1 k_2)p_2|\\
  &\lesssim C^{2}\{|q 2^{\frac 23} - p_1| + |q 2^{\frac 13} - p_2|\}\\
  &\leq C^{-\frac{\varepsilon}{2}} \ll 1
\end{align*}
for sufficiently large $C \geq 1$. We expect that the solution refocuses 
the initial data around $t \sim C^{4}$ and expect 
\[
\|e^{2\pi it D^{2}_{x}}R_C f(x)\|_{L^{p}_{t}\mathcal{L}^{p}_{x}([0,T]\times\R)} \lesssim_{\varepsilon} \Bigl(\frac{T}{C^{4 + \varepsilon}}\Bigr)^{\frac 1p} C^{1 - \frac 4p + \varepsilon}\|R_C f(x)\|_{\mathcal{L}^{2}_{x}(\R)}. 
\]
Our corresponding result for this case is the following. 
\begin{theorem}
  Let $\varepsilon > 0$, $\nu = 2$, and let $\omega\in \R$ be an algebraic irrational number of degree greater than $2$. Suppose that $\vec{\omega} = (1, \omega)$. 
    Then, for $6 \leq p < 14$ and $T > C^{2}$, we have
    \begin{equation}
      \|e^{2\pi i tD^{2}_{x}}R_Cf(x)\|_{L^p_{t}\mathcal{L}^{p}_{x}
      ([0,T]\times \R)} \lesssim_{\varepsilon} ST(T,C,p, \varepsilon)
      \|R_Cf(x)\|_{\mathcal{L}^{2}_{x}(\R)} \label{eq:main_thm_degree3}
    \end{equation}
    where 
    \begin{align*}
    ST(T,C,p, \varepsilon)=
    \begin{cases}
      \Bigl\{1 +\Bigl(\frac{T}{C^{2}}\Bigr)^{\frac 1p}\Bigr\}
      C^{1 - \frac{2}{p} + \varepsilon} \quad &(C^2 < T < C^4),\\
      \Bigl\{1 +\Bigl(\frac{T}{C^{4}}\Bigr)^{\frac 1p}\Bigr\}
      C^{1 - \frac{2}{p} + \varepsilon} &(T \geq C^4). 
    \end{cases}
    \end{align*}
    Moreover, if $p \geq 14$ and $T\geq 1$, then 
    \begin{equation}
      \|e^{2\pi i tD^{2}_{x}}R_Cf(x)\|_{L^p_{t}\mathcal{L}^{p}_{x}
        ([0,T]\times \R)} \lesssim_{\varepsilon} C^{\varepsilon}(C^{1 - \frac{4}{p}} + C^{1-\frac{8}{p}}T^{\frac{1}{p}})
        \|R_Cf(x)\|_{\mathcal{L}^{2}_{x}(\R)}
        \label{eq:main_thm_degree3_l14}
    \end{equation}
    holds. 
    \label{theorem:degree3}
\end{theorem}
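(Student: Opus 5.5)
We reduce everything to a counting problem for the phases. Write $R_Cf(x)=\sum_{|k|\sim C} a_k e^{2\pi i (k\cdot\vomega)x}$ with $k=(k_1,k_2)$ and $\lambda_k=k_1+k_2\omega$. Since $\omega$ is irrational, the frequencies $\lambda_k$ are distinct. For even $p=2m$, Parseval in $x$ (the $\mathcal{L}^2_x$ mean of a quasi-periodic trigonometric polynomial) gives
\[
\|e^{2\pi itD_x^2}R_Cf\|_{L^{2m}_t\mathcal{L}^{2m}_x([0,T]\times\R)}^{2m}
=\int_0^T \sum_{\mu}\Bigl|\sum_{\lambda_{k^1}+\dots+\lambda_{k^m}=\mu} \prod_j a_{k^j} e^{2\pi i t\sum_j\lambda_{k^j}^2}\Bigr|^2 dt .
\]
Because $1,\omega$ are $\Q$-independent, the constraint $\sum_j\lambda_{k^j}=\mu$ is the pair of linear equations $\sum k^j_1=\mu_1$, $\sum k^j_2=\mu_2$.

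The quadratic phase is $\lambda_k^2=k_1^2+2k_1k_2\omega+k_2^2\omega^2$. When $\deg\omega>2$, the numbers $1,\omega,\omega^2$ are $\Q$-independent. So the phase is a linear form in the three quadratic monomials $(k_1^2,k_1k_2,k_2^2)$ with coefficients $\vec\theta=(1,2\omega,\omega^2)$. After expanding $|\cdot|^2$ and integrating in $t$, we get the kernel $\min(T,\|\vec\theta\cdot\Delta\|^{-1})$, where $\Delta\in\Z^3$ is the difference of the sums of quadratic monomials over the two $m$-tuples. This has exactly the shape of a Parsell--Vinogradov system of degree $2$ in dimension $2$ (linear plus quadratic monomials), tested against the real vector $\vec\theta$.

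The plan is to split according to the size of $\Delta$. On the diagonal $\Delta=0$, the contribution is $T\cdot J_{m}(C)$, where $J_m$ is the number of solutions of the full Parsell--Vinogradov system with variables of size $C$. The Vinogradov mean value theorem for Parsell--Vinogradov systems (Bourgain--Demeter--Guth/Guo--Zhang) gives $J_m(C)\lesssim C^{\varepsilon}(C^{2m}+C^{4m-8})$, because the system has $5$ equations with total degree $2+2+2\cdot 3=8$ and the critical exponent is $2m=14$. Combined with Cauchy--Schwarz on the coefficients, this yields the $C^{1-8/p}T^{1/p}$ term for $p\ge14$, and the diagonal part of the bound for $p<14$.

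For $\Delta\ne0$ we have $|\Delta|\lesssim C^2$ and $\|\vec\theta\cdot\Delta\|$ small. Here we use a Schmidt-subspace / Roth-type Diophantine inequality for the algebraic vector $(1,\omega,\omega^2)$: $\|\Delta_2\cdot 2\omega+\Delta_3\omega^2\|\gtrsim_\varepsilon |\Delta|^{-2-\varepsilon}\gtrsim C^{-4-\varepsilon}$. We expect this step to need care, because Schmidt's theorem has exceptional subspaces; we would handle those separately, where the relation degenerates to the degree-two situation. Hence once $T\ge C^4$, the kernel is bounded by $C^{4+\varepsilon}$ uniformly, producing the factor $(T/C^4)$ times a short-time bound. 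For $C^2<T<C^4$ we instead use only the weaker bound, giving $(T/C^2)$, and interpolate between $p=6$ and $p=14$. For the short-time piece on an interval of length $\sim C^4$ (resp. $C^2$), we count solutions with $\vec\theta\cdot\Delta$ in a window, bounding the off-diagonal count by the lattice-point count in a box. Combining that count with Schippa's decoupling bound \eqref{eq:Schippa_l6} on unit intervals gives the factor $C^{1-2/p+\varepsilon}$. Non-even $p$ follow by interpolating between the even exponents $6,8,\dots,14$ and with $L^\infty$ (trivially $\lesssim C\|R_Cf\|_{\mathcal{L}^2}$).

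The main obstacle is the off-diagonal analysis: proving that near-solutions with $0<\|\vec\theta\cdot\Delta\|<T^{-1}$ contribute no more than the diagonal, up to $C^{\varepsilon}$. The first thing to try is a dyadic decomposition in $\|\vec\theta\cdot\Delta\|$ together with the Diophantine lower bound above. The lower bound shows that each dyadic level contains only $O(C^{\varepsilon}\cdot\text{volume})$ admissible $\Delta$. Then, for each fixed $\Delta$, we count the solutions of the inhomogeneous Parsell--Vinogradov system and bound that count by the homogeneous one $J_m(C)$ via the standard translation-invariance (Hölder) argument.
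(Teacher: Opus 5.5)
There is a genuine gap where you pass from counting to arbitrary coefficients $a_k$. Write $u=e^{2\pi itD_x^2}R_Cf$ and $A(h)=\sum_{\Phi(k^1,\dots,k^m)=h}\prod_j|a_{k^j}|$, where $\Phi$ collects the five Parsell--Vinogradov moments. Your expansion, the translation step $\sum_hA(h)A(h+\Delta)\le\sum_hA(h)^2$, and the multi-Roth count do give $\|u\|^{2m}_{L^{2m}([0,T]\times\T^2)}\le\bigl(\sum_{\Delta}\min(T,|\vec\theta\cdot\Delta|^{-1})\bigr)\sum_hA(h)^2$ with $\sum_\Delta\min(T,|\vec\theta\cdot\Delta|^{-1})\lesssim_\varepsilon T+C^{4+\varepsilon}$. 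The kernel must use the absolute value; with the distance to $\Z$, the sum over $\Delta_1$ would cost a factor $C^2$.

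What is missing is the bound $\sum_hA(h)^2\lesssim C^{\varepsilon}\|a\|_{\ell^2}^{6}$ for $m=3$ and $\sum_hA(h)^2\lesssim C^{6+\varepsilon}\|a\|_{\ell^2}^{14}$ for $m=7$. Cauchy--Schwarz on the coefficients reduces this to $\sup_h r_m(h)$, the maximal number of representations. The mean value theorem, however, only controls $J_m(C)=\sum_h r_m(h)^2$, which gives $\sup_h r_m(h)\le J_m(C)^{1/2}$, i.e. $C^{3+\varepsilon}$ and $C^{10+\varepsilon}$. So your argument proves the estimates only for flat data $a_k\equiv 1$.

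Nor can a weighted (decoupling) form at the critical exponent rescue this; that exponent is $8$, not $14$. The bound $\int_{\T^5}|G_a|^8\lesssim C^{\varepsilon}\|a\|_{\ell^2}^8$ is false: for $a=1$ on the line $k_2=0$ the left side is the one-dimensional degree-two Vinogradov count $\sim C^5$, while $\|a\|_{\ell^2}^8\sim C^4$. Finally, summing \eqref{eq:Schippa_l6} over unit time intervals does give the regime $C^2<T<C^4$, as the paper remarks. On $[0,C^4]$, though, it only gives the constant $C$, not $C^{1-2/p}$, so it cannot supply the short-time bound you need when $T\ge C^4$.

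The paper fills these holes with inputs of a different kind. For $p=6$, Lemma~\ref{lemma:additive_energy} gives the pointwise bound $\sup_{M,S}\#\{(\lambda_2,\lambda_4,\lambda_6):\sum\lambda_j=M,\ \sum\lambda_j^2=S\}\lesssim_\varepsilon C^{\varepsilon}$. It is proved by eliminating one variable, using the $\Q$-independence of $1,\omega,\omega^2$, and factoring $K_1^2+K_1K_2+K_2^2$ over the Eisenstein integers. This arithmetic input does not follow from the mean value bound for $J_3(C)$. Combined with the multi-Roth count of admissible phase values, it gives $\|u\|^6_{L^6_t\mathcal{L}^6_x([0,C^4]\times\R)}\lesssim_\varepsilon C^{4+\varepsilon}\|R_Cf\|^6_{\mathcal{L}^2}$; the cases $6<p<14$ then follow by interpolation with $L^\infty$ and splitting $[0,T]$.

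For $p=14$, the paper reduces to $a=1_S$ with the restricted-type Lemma~\ref{lem:reduction_to_indicator}. It then bounds the count by $|S|^6J_4(C,S)\lesssim_\varepsilon C^{6+\varepsilon}|S|^7=C^{6+\varepsilon}\|1_S\|_{\ell^2}^{14}$ via \eqref{eq:Bourgain_Demeter_PV}. This is where the exponent $14$ comes from: fixing $2m-8$ variables is lossless for every $S$ only when $m\ge 7$.

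With these two inputs plugged in, your factorized inequality would close the argument and handle both time regimes at once. Your worry about exceptional subspaces is unnecessary, since Theorem~\ref{theorem:multi_roth} applies directly to $1,\omega,\omega^2$.
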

Note that, in the regime of $C^2 \leq T < C^4$, the estimate \eqref{eq:main_thm_degree3} is the same as that in
Theorem \ref{theorem:schippa_decoupling} with our assumptions. In contrast, 
when $T \geq C^4$, the constant becomes smaller and has the same scaling as in
Theorem \ref{theorem:long_time_Strchartz}. This suggests that the phenomenon in Observation 2 might occur in this regime. Substituting $f(x) = e^{2\pi i(k\cdot\vomega)x}$ with $|k|\sim C$, we find that our estimate \eqref{eq:main_thm_degree3} is essentially
sharp when $p = 6$ and $T \sim C^4$. Also \eqref{eq:main_thm_degree3_l14} is consistent with Observation 2. However, we do not know whether the estimate can be improved in the intermediate range $C^2 \leq T < C^4$ and whether for $6 < p < 14$, the estimate is sharp.

We note a similarity with Strichartz estimates on an irrational torus. To see this, we utilize the Kronecker--Weyl type identity, which will be shown in Section 2:
\begin{lemma*}[Kronecker-Weyl]
  Let $\nu \geq 2$, $\vec{\omega}\in\R^\nu$ be non-resonant, and $p\in [1,\infty)$. 
  Then for any trigonometric polynomial 
  $\sum_{k\in \Lambda}a_{k}e^{2\pi i (\vec{\omega}\cdot k)x}\in 
  \mathcal{T}_{\vec{\omega}}(\R)$ ($\Lambda\subset \Z^\nu$ is a finite subset), 
  we have 
  \[\Bigl\|\sum_{k\in\Lambda}a_{k}e^{2\pi i (\vec{\omega}\cdot k)x}\Bigr\|_{\mathcal{L}^{p}_{x}(\R)} 
  = \Bigl\|\sum_{k\in \Lambda}a_{k}e^{2\pi i k\cdot y}\Bigr\|_{L^p_{y}(\T^\nu)}\]
\end{lemma*}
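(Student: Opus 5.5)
The plan is to reduce the identity to a statement about $|P|^p$ where $P$ is a finite exponential sum, and then to apply Weyl's equidistribution theorem for the linear flow $x\mapsto x\vec{\omega}$ on $\T^\nu$. Write $f(x)=\sum_{k\in\Lambda}a_k e^{2\pi i(\vec{\omega}\cdot k)x}$ and set $P(y):=\sum_{k\in\Lambda}a_k e^{2\pi i k\cdot y}$ for $y\in\T^\nu$. Then $f(x)=P(x\vec{\omega} \bmod \Z^\nu)$. The function $G:=|P|^p$ is continuous on $\T^\nu$, so it suffices to show
\[
\lim_{L\to\infty}\frac{1}{2L}\int_{-L}^{L}G(x\vec{\omega})\,dx=\int_{\T^\nu}G(y)\,dy .
\]
This gives the claimed equality with $\limsup$ replaced by an honest limit.

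The equidistribution statement is proved in three steps.

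\textbf{Step 1: trigonometric polynomials.} Take $G(y)=e^{2\pi i m\cdot y}$ with $m\in\Z^\nu$. If $m=0$, both sides equal $1$. If $m\neq0$, non-resonance gives $\mu:=m\cdot\vec{\omega}\neq0$. Then
\[
\frac{1}{2L}\int_{-L}^{L}e^{2\pi i\mu x}\,dx=\frac{\sin(2\pi\mu L)}{2\pi\mu L}\to0=\int_{\T^\nu}e^{2\pi i m\cdot y}\,dy .
\]
By linearity the identity holds for every trigonometric polynomial on $\T^\nu$.

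\textbf{Step 2: continuous functions.} Let $G\in C(\T^\nu)$ and $\delta>0$. By Stone--Weierstrass, or by Fej\'er means, there is a trigonometric polynomial $Q$ with $\|G-Q\|_{L^\infty(\T^\nu)}<\delta$. Both averaging functionals, $G\mapsto\frac{1}{2L}\int_{-L}^{L}G(x\vec{\omega})\,dx$ and $G\mapsto\int_{\T^\nu}G$, have norm at most $1$ with respect to the sup norm. Hence
\[
\limsup_{L\to\infty}\Bigl|\frac{1}{2L}\int_{-L}^{L}G(x\vec{\omega})\,dx-\int_{\T^\nu}G\Bigr|\le2\delta .
\]
Since $\delta>0$ is arbitrary, the limit exists and equals $\int_{\T^\nu}G$.

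\textbf{Step 3: conclusion.} Apply Step 2 to $G=|P|^p$, which is continuous because $P$ is a finite sum, and then take $p$-th roots.

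An alternative for even $p=2m$ is to expand $|f|^{2m}$ directly as a finite exponential sum in $x$. Its average picks out exactly the terms with $(k_1+\dots+k_m-k_{m+1}-\dots-k_{2m})\cdot\vec{\omega}=0$, and by non-resonance these are exactly the terms with $k_1+\dots+k_m=k_{m+1}+\dots+k_{2m}$, which are the terms surviving on the torus. General $p$ does not admit such an expansion, which is why the approximation step is needed.

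The main point of care is Step 2. The non-resonance hypothesis enters only in Step 1, through $m\cdot\vec{\omega}\neq0$, but the passage to $|P|^p$ for non-integer $p$ requires the density argument, and that is what makes the proof work for all $p\in[1,\infty)$. Everything else is routine.
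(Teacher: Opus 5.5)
Your proposal is correct and takes the same route as the paper: you approximate the continuous function $|P|^p$ on $\T^\nu$ uniformly by trigonometric polynomials, and you use non-resonance to show that the mean of $e^{2\pi i (m\cdot\vec{\omega})x}$ vanishes for $m\neq 0$. Your Step 2 spells out the ``standard argument'' that the paper leaves implicit (both averaging functionals have sup-norm bound $1$), which also shows that the $\limsup$ is an honest limit.
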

Let $\vomega = (1,\omega)$ and $\omega\in\R\backslash\Q$. 
From this identity, our Strichartz estimate 
\[
 \|e^{2\pi i tD^{2}_{x}}R_Cf(x)\|_{L^p_{t}\mathcal{L}^{p}_{x}([0,T]\times \R)} \lesssim ST(T,C)\|R_Cf(x)\|_{\mathcal{L}^{2}_{x}(\R)}
\]
for some $p\in [2,\infty)$ and $s \geq 0$
can be viewed as an inequality on the two-dimensional torus:
\[
\|e^{2\pi i t\tilde{\Delta}}P_{C}f(x)\|_{L^{p}_{t,x}([0,T]\times \T^{2})}\lesssim ST(T,C)\|P_{C}f(x)\|_{L^{2}_{x}(\T^2)}
\]
where $\tilde{\Delta}:= \partial^{2}_{x_{1}} + 2\omega\partial_{x_{1}}\partial_{x_2} + \omega^{2}\partial^2_{x_2}$ and $P_{C}f(x) = \sum_{|k|\sim C}a_{k}e^{2\pi ik\cdot x}$. The symbol of our Fourier multiplier $\tilde{\Delta}$ is written as 
\[
-4\pi^2 (k_1, k_2)\begin{pmatrix}
  1 & \omega\\
  \omega & \omega^{2}
\end{pmatrix}
\begin{pmatrix}
  k_1\\
  k_2
\end{pmatrix}. 
\]
We let $A_{\omega}$ denote the $2\times 2$ matrix inside the symbol. From this observation, we obtain a corollary of Theorem~\ref{theorem:long_time_Strchartz} and Theorem~\ref{theorem:degree3}: 
\begin{corollary}
  Let $\varepsilon > 0$ and $C \geq 1$. Depending on $T > 0$, $p\geq 2$, and $\omega\in \R\backslash\Q$, the following estimates hold for any smooth functions $f$ on $\T^2$: 
  \begin{enumerate}
    \item If $\omega\in\R\backslash\Q$ is an algebraic number of degree $2$, $T \geq C^{2 + \varepsilon}$, and $p \geq 6$, 
    \[
    \|e^{2\pi i t\tilde{\Delta}}P_{C}f(x)\|_{L^{p}_{t,x}([0,T]\times \T^{2})}\lesssim_{\varepsilon}C^{\varepsilon}(1 + T^{\frac 1p}C^{-\frac{2}{p}})C^{1 - \frac 4p}\|P_{C}f(x)\|_{L^{2}_{x}(\T^2)}. 
    \]
    \item If $\omega\in\R\backslash\Q$ is an algebraic number of degree greater than $2$, $T \geq C^{4 + \varepsilon}$, and $p \geq 6$, 
    \[
    \|e^{2\pi i t\tilde{\Delta}}P_{C}f(x)\|_{L^{p}_{t,x}([0,T]\times \T^{2})}\lesssim_{\varepsilon}C^{\varepsilon}(1 + T^{\frac 1p}C^{-\frac{4}{p}})C^{1 - \frac 2p}\|P_{C}f(x)\|_{L^{2}_{x}(\T^2)}. 
    \]
    \item If $\omega\in\R\backslash\Q$ is an algebraic number of degree greater than $2$, $T \geq 1$, and $p \geq 14$, 
    \[
    \|e^{2\pi i t\tilde{\Delta}}P_{C}f(x)\|_{L^{p}_{t,x}([0,T]\times \T^{2})}\lesssim_{\varepsilon}C^{\varepsilon}(C^{1 - \frac{4}{p}} + C^{1-\frac{8}{p}}T^{\frac{1}{p}})\|P_{C}f(x)\|_{L^{2}_{x}(\T^2)}.
    \]
  \end{enumerate}
  \label{cor:main}
\end{corollary}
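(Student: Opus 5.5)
The corollary will follow by transferring Theorems \ref{theorem:long_time_Strchartz} and \ref{theorem:degree3} to $\T^2$ through the Kronecker--Weyl identity, applied at each fixed time $t$.

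Given a smooth $f$ on $\T^2$ with Fourier coefficients $a_k$, I set $g(x)=\sum_{|k|\sim C}a_k e^{2\pi i(k\cdot\vomega)x}\in\mathcal{T}_{\vomega}(\R)$ with $\vomega=(1,\omega)$. Non-resonance of $\vomega$ is exactly the irrationality of $\omega$. We have $R_Cg=g$, provided the dyadic shell $\{C/2\le|k|<C\}$ is matched with the support of $P_C$, which is a harmless convention. The quadratic form gives $-4\pi^2 k^{T}A_\omega k=-4\pi^2|k\cdot\vomega|^2$, so
\[
e^{2\pi it\tilde{\Delta}}P_Cf(y)=\sum_{|k|\sim C}a_k e^{2\pi i k\cdot y}e^{\pm 2\pi i(2\pi)^2 t|k\cdot\vomega|^2}.
\]
Up to the normalization of the time variable, which only rescales $T$ by a fixed constant, this is the torus trigonometric polynomial whose coefficients are those of $e^{2\pi itD_x^2}g$. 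If the sign of the time exponent is opposite, I conjugate or reflect $t\mapsto -t$. Both theorems are stated on $[0,T]$, and the estimates are invariant under $t\mapsto -t$ because the conjugate of a solution is again a solution after sign changes.

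For each fixed $t$, I apply the Kronecker--Weyl lemma to the finite sum. This gives
\[
\|e^{2\pi it\tilde\Delta}P_Cf\|_{L^p(\T^2)}=\|e^{2\pi itD_x^2}g\|_{\mathcal{L}^p_x(\R)},\qquad \|P_Cf\|_{L^2(\T^2)}=\|g\|_{\mathcal{L}^2(\R)}.
\]
Raising to the $p$-th power and integrating over $t\in[0,T]$ identifies the two space-time norms exactly.

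Each item then follows from the corresponding theorem.

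\textbf{Item (1).} I use \eqref{eq:main_thm_degree2}, noting that $(T/C^2)^{1/p}=T^{1/p}C^{-2/p}$.

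\textbf{Item (2).} Here $T\ge C^{4+\varepsilon}$ puts us in the second case of \eqref{eq:main_thm_degree3}, which yields $(1+T^{1/p}C^{-4/p})C^{1-2/p+\varepsilon}$ for $6\le p<14$. For $p\ge14$, I use \eqref{eq:main_thm_degree3_l14} and check that it is dominated by the claimed bound. First, $C^{1-4/p}\le C^{1-2/p}$. Second, $C^{1-8/p}T^{1/p}\le C^{1-2/p}\,T^{1/p}C^{-4/p}$, since $-8/p\le-6/p$.

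\textbf{Item (3).} This is \eqref{eq:main_thm_degree3_l14} verbatim.

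The only real point of care is the bookkeeping of the time normalization: the factor $4\pi^2$ and the sign between $e^{2\pi it\tilde\Delta}$ and $e^{2\pi itD_x^2}$. Rescaling $t$ by a constant changes $T$ only by a constant factor and the norm by a constant power, so it is absorbed into $\lesssim_\varepsilon$. The threshold conditions on $T$ survive up to constants, and any slack is absorbed by the $C^{\varepsilon}$ factor. Because of this, I expect no step to be a genuine obstacle.
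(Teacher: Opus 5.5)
Your proposal is correct and follows the paper's route. The corollary comes from identifying the torus norms with the quasi-periodic norms via the Kronecker--Weyl identity (Lemma~\ref{lemma:kronecker_weyl}) at each fixed time, then invoking Theorems~\ref{theorem:long_time_Strchartz} and~\ref{theorem:degree3}; your extra bookkeeping fills in details the paper leaves implicit, namely absorbing the $-4\pi^2$ time normalization and sign by rescaling and conjugation, and covering the range $p\ge 14$ in item (2) by checking that \eqref{eq:main_thm_degree3_l14} is dominated by the stated bound.
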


In Deng--Germain--Guth--Myerson~\cite{Degn_Germain_Guth_Myerson}, the Strichartz estimates on non-rectangular irrational tori were considered. After a suitable change of variables, the estimates are reduced to the inequalities on a rectangular torus with a twisted Laplacian $\Delta_{A}$: 
\[
\|e^{2\pi i t\Delta_{A}}P_{C}f(x)\|_{L^{p}_{t,x}([0,T]\times \T^{2})}\lesssim ST(T,C)\|P_{C}f(x)\|_{L^{2}_{x}(\T^2)}
\]
where the symbol of the differential operator $\Delta_{A}$ is given by
\[
-4\pi^2 (k_1, k_2)A
\begin{pmatrix}
  k_1\\
  k_2
\end{pmatrix}
\]
for some symmetric and positive definite matrix $A$. So, our problem can also be regarded as the degenerate version ($\det A_{\omega} = 0$) of their problems. Deng--Germain--Guth--Myerson~\cite{Degn_Germain_Guth_Myerson} proved the following estimate: 
\begin{theorem}[The two dimensional version of Theorem 1.1. and Theorem 1.2. in \cite{Degn_Germain_Guth_Myerson}]
  For $p > 4$, any symmetric, positive definite matrices $A$ with some Diophantine conditions\footnote{In their paper, they considered all $A$ that belong to the complement of a set of Lebesgue measure zero. }, any $\varepsilon > 0$, and any smooth functions on $\T^2$, 
  \begin{equation*}
    \|e^{2\pi it\Delta_{A}}P_{C}f\|_{L^{p}([0,T]\times \T^2)}\lesssim_{\varepsilon} C^{\varepsilon}
    \Bigl[C^{1 - \frac 4p} +T^\frac{1}{p}C^{\frac{2}{3}\bigl(1 - \frac 4p\bigr)} + C^{\frac{1}{2} - \frac{3}{p}} T^\frac{1}{p} + C^{1 - \frac 8p} T^\frac{1}{p}\Bigr]
    \|P_{C}f\|_{L^{2}(\T^2)}
  \end{equation*}
  \label{thm:DGGM}
\end{theorem}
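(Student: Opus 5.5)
The proof plan for Theorem~\ref{thm:DGGM} is a level-set / $TT^*$ argument in the style of Bourgain, which is the approach of \cite{Degn_Germain_Guth_Myerson}. Write
\[
u(t,x)=e^{2\pi it\Delta_A}P_Cf=\sum_{|k|\sim C}a_k\,e^{2\pi i(k\cdot x-Q_A(k)t)},\qquad Q_A(k)=k^{T}Ak,
\]
and normalize $\|P_Cf\|_{L^2(\T^2)}=1$. For $\lambda>0$ set $E_\lambda=\{(t,x)\in[0,T]\times\T^2: |u|>\lambda\}$ with $|E_\lambda|$ its measure. It then suffices to bound $|E_\lambda|$ for each $\lambda$ and integrate $\lambda^{p-1}|E_\lambda|$.

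Pairing $u$ with $F=\mathbf 1_{E_\lambda}\,u/|u|$ and applying $TT^*$ gives
\[
\lambda^2|E_\lambda|^2\le \langle K*F,F\rangle,\qquad K(t,x)=\sum_{|k|\sim C}\chi(k/C)\,e^{2\pi i(k\cdot x-Q_A(k)t)}\mathbf 1_{[-T,T]}(t).
\]
With a smooth partition in $t$, split $K=K_{\mathrm{loc}}+K_{\mathrm{glob}}$, where $K_{\mathrm{loc}}$ lives on $|t|\lesssim C^{-1}$ (or on $|t|\lesssim 1$).

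The local part is controlled in two ways. Dispersion gives $|K_{\mathrm{loc}}(t,x)|\lesssim |t|^{-1}$, and on unit time intervals one has the short-time decoupling bound (the analogue of Theorem~\ref{theorem:schippa_decoupling} on $\T^2$, via Bourgain--Demeter \cite{Bourgain_Demeter_decoupling}). Summing the latter over $T$ unit intervals gives
\[
\|u\|_{L^p([0,T]\times\T^2)}\lesssim_\varepsilon C^{\varepsilon}T^{\frac1p}\bigl(C^{\frac12-\frac3p}+\cdots\bigr),
\]
which accounts for the terms $C^{1-\frac4p}$ and $C^{\frac12-\frac3p}T^{\frac1p}$.

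The heart of the proof is a pointwise bound on $K_{\mathrm{glob}}$ for $C^{-1}\lesssim|t|\le T$. Since $Q_A$ has the cross term $2a_{12}k_1k_2$, the sum does not factor. Instead one applies Weyl differencing in one variable, which reduces matters to one-dimensional quadratic Weyl sums with frequencies $t a_{11}$, $t a_{22}$, $t a_{12}$. One then uses a metric Diophantine condition: for $A$ outside a null set, the quantities $\|q\,t\,a_{ij}\|$ cannot all be small for many $t$ and $q$. A Borel--Cantelli argument over dyadic scales $q\sim Q$, $|t|\sim\tau$ gives, for almost every $A$,
\[
\sup_{C^{-1}\lesssim|t|\le T}|K(t,x)|\lesssim_\varepsilon C^{\varepsilon}\,C^{\frac43},
\]
i.e.\ a power saving of the trivial bound $C^2$, off an exceptional set of $t$ of small measure. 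Two estimates then combine:
\[
\langle K_{\mathrm{glob}}*F,F\rangle\le \|K_{\mathrm{glob}}\|_\infty\,|E_\lambda|^2,\qquad \langle K_{\mathrm{loc}}*F,F\rangle\lesssim \|K_{\mathrm{loc}}\|_{L^1_tL^\infty_x}\,|E_\lambda|\ \text{(Young)}.
\]
For $\lambda^2\gg C^{\frac43+\varepsilon}$ the global term is absorbed, so $|E_\lambda|\lesssim \lambda^{-q}$ for a suitable exponent. For smaller $\lambda$, interpolate with the trivial $L^2$ bound $\|u\|_{L^2([0,T]\times\T^2)}=T^{\frac12}$. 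The resulting distributional estimate yields the term $T^{\frac1p}C^{\frac23(1-\frac4p)}$ ($=\bigl(C^{\frac43}\bigr)^{\frac12-\frac2p}$ after interpolation) and, from the range $\lambda$ near $C$ with $p$ large, the term $C^{1-\frac8p}T^{\frac1p}$.

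The main obstacle is the uniform-in-$t$ kernel bound for generic $A$ on the full interval $|t|\le T$. The exceptional times where all $t a_{ij}$ are simultaneously close to rationals with small denominator must be handled by a separate major-arc count. On those arcs the kernel is a Gauss-sum of size about $C^2/q$. One bounds the measure of such $t$, $\lesssim T\,q^{-2}\cdot(\text{arc width})$ for generic $A$, and checks that this contribution fits within the same three $T^{\frac1p}$ terms. I would first try the measure-theoretic approach: prove that the expected value over $A$ of the number of bad $(t,q)$ is summable over dyadic scales.
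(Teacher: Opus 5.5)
The paper does not prove this statement; it quotes it from \cite{Degn_Germain_Guth_Myerson} for comparison. Your sketch therefore has to stand on its own, and as written it has a genuine gap exactly at the step you defer. The exceptional set of times is not a technicality: it carries the long-time content of the theorem. For a.e.\ $A$ the entries $a_{11},a_{12},a_{22}$ are $\Q$-linearly independent. Example (c) of Section~3 then adapts to $\Delta_A$ and gives, for even $p$ and Dirichlet-type data, $\|e^{2\pi it\Delta_A}P_Cf\|_{L^p([0,T]\times\T^2)}\gtrsim C^{1-\frac8p}T^{\frac1p}\|P_Cf\|_{L^2}$. For $p>16$ this exceeds $T^{\frac1p}C^{\frac23(1-\frac4p)}$, and it exceeds $C^{1-\frac4p}$ once $T\gg C^4$. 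So a bound $\|K_{\mathrm{glob}}\|_\infty\lesssim C^{\frac43+\varepsilon}$ on $1\lesssim|t|\le T$, fed into the level-set argument, would prove something false for large $T$. The times at which $t(a_{11},2a_{12},a_{22})$ lies near $\frac1q\Z^3$ exist by Kronecker, and the term $C^{1-\frac8p}T^{\frac1p}$ must come out of their analysis.

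The natural estimate $\langle K_{\mathrm{exc}}*F,F\rangle\le\|K_{\mathrm{exc}}\|_{L^1_tL^\infty_x}|E_\lambda|$ is too weak for this. Already for $q=1$ the refocusing times have measure $\sim TC^{-6}$ with $|K|\sim C^2$. That gives $\lambda^2|E_\lambda|\lesssim TC^{-4}$ and hence only $T^{\frac1p}C^{1-\frac6p}$, the diagonal exponent of Theorem~\ref{thm:diagonal}. Reaching $C^{1-\frac8p}$ requires one of two things:
\begin{itemize}
\item exploiting the spatial concentration of the kernel, summed carefully over all denominators and over the decay zones around each arc; or
\item the counting route that the paper (Section~5.2) says follows \cite{Degn_Germain_Guth_Myerson}. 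There one expands an even $L^p$ norm, reduces to indicator data via Lemma~\ref{lem:reduction_to_indicator}, and bounds the inhomogeneous Parsell--Vinogradov counts by \eqref{eq:Bourgain_Demeter_PV}. One then counts the values $a_{11}\alpha+2a_{12}\beta+a_{22}\gamma$ in dyadic ranges via a Diophantine condition on $A$.
\end{itemize}
Your proposal contains neither.

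The threshold $C^{\frac43}$ is also unsupported. Differencing in $k_1$ alone gives $|K|\lesssim C\bigl(\sum_{|h|\lesssim C}\min(C,\|2ta_{11}h\|^{-1})\bigr)^{1/2}$. This is the trivial bound $C^2$ whenever $\|ta_{11}\|\lesssim C^{-2}$, a set of times of measure $\sim TC^{-2}$, because it discards $ta_{12}$ and $ta_{22}$. Any saving of that kind needs joint approximation of all three coefficients, which you do not carry out.

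The surrounding bookkeeping also needs repair, in three places.
\begin{itemize}
\item \emph{Local part above the threshold.} On $|t|\lesssim C^{-1}$ the dispersive bound gives $\|K_{\mathrm{loc}}\|_{L^1_tL^\infty_x}\lesssim\log C$. Young's inequality then yields only $|E_\lambda|\lesssim\lambda^{-2}\log C$ above the threshold, which integrates to $C^{1-\frac2p}$. The $T$-independent term $C^{1-\frac4p}$ needs $|E_\lambda|\lesssim C^{\varepsilon}\lambda^{-4}$. That is, you need an $L^{4/3}\to L^4$ bound for $K_{\mathrm{loc}}*$, uniform in $T$, coming from the local $L^4$ estimate and the bounded overlap of time-localized pieces.
\item \emph{Summing unit-time decoupling.} This produces neither $C^{1-\frac4p}$ nor $C^{\frac12-\frac3p}T^{\frac1p}$. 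The sharp unit-time constant on $\T^2$ is $C^{1-\frac4p}$ up to $C^{\varepsilon}$ (take Dirichlet data near $t=0$). Summing therefore gives $T^{\frac1p}C^{1-\frac4p}$, which is larger than every term in the statement. The factor $C^{\frac12-\frac3p}$ is only the one-dimensional decoupling loss $s_p$. Incidentally, $C^{\frac12-\frac3p}\le C^{\frac23(1-\frac4p)}$ for all $p$, so that term is redundant as stated.
\item \emph{Below the threshold.} Chebyshev with the $L^2$ bound gives $T^{\frac1p}(C^{\frac43})^{\frac12-\frac1p}$, not $(C^{\frac43})^{\frac12-\frac2p}$. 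For the latter you need $\|u\|_{L^4([0,T]\times\T^2)}\lesssim C^{\varepsilon}T^{\frac14}$.
\end{itemize}
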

In Deng--Germain--Guth~\cite{Deng_Germain_Guth_irrational}, the diagonal case was also considered: 
\begin{theorem}[Two dimensional version of Theorem 3.1. in \cite{Deng_Germain_Guth_irrational}]
  For any $\varepsilon >  0$ and diagonal positive definite $A$ with some Diophantine type assumptions, the following estimate holds for $p > 6$ and for any smooth functions $f$ on $\T^2$: 
  \[
  \|e^{2\pi i t\Delta_{A}}P_{C}f\|_{L^{p}([0,T]\times \T^2)}\lesssim_{\varepsilon}C^{\varepsilon}\bigl[C^{1 -\frac{4}{p}} + T^{\frac{1}{p}}C^{1 - \frac{6}{p}}\bigr]\|P_{C}f\|_{L^{2}(\T^2)}. 
  \]
  \label{thm:diagonal}
\end{theorem}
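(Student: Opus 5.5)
The plan is to run a Bourgain-type level-set argument, exploiting that for diagonal $A=\mathrm{diag}(\beta_1,\beta_2)$ the Schr\"odinger kernel factors into a product of one-dimensional Weyl sums. Write $u=e^{2\pi it\Delta_A}P_Cf$ with $\|P_Cf\|_{L^2}=1$, and
\[
K(t,x)=\sum_{|k|\sim C}e^{2\pi i(k\cdot x+tk^{T}Ak)}=W_C(\beta_1t,x_1)\,W_C(\beta_2t,x_2),\qquad W_C(s,y)=\sum_{|n|\sim C}e^{2\pi i(ny+sn^2)}.
\]
By $TT^*$ duality, for $E_\lambda=\{(t,x)\in[0,T]\times\T^2: |u|>\lambda\}$ and a unimodular $g$ supported on $E_\lambda$, we have
\[
\lambda^2|E_\lambda|^2\lesssim \Bigl|\int\!\!\int K(t-t',x-x')g(t,x)\overline{g(t',x')}\Bigr|.
\]
Combined with the trivial bound $|u|\lesssim C$, it then suffices to prove suitable distributional bounds for $|E_\lambda|$ when $\lambda\le C$.

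The first step is to split $K=K_1+K_2$ with a smooth cutoff in time. Here $K_1$ is localized to $|t|\lesssim 1/C$ and $K_2$ is the remainder on $1/C\lesssim|t|\le T$. On the short-time piece the standard Weyl/dispersive bound $|W_C(s,y)|\lesssim |s|^{-1/2}$ for $|s|\lesssim 1/C$ gives $\|K_1\|_{L^\infty_x}\lesssim |t|^{-1}$. Hence $\|K_1\|_{L^1_tL^\infty_x}\lesssim C^{\varepsilon}$ essentially, and together with the $L^2$-bound this yields the local estimate $C^{1-4/p+\varepsilon}$: this is the ``$1\times$ decoupling'' term, which can alternatively be quoted from Bourgain--Demeter on unit time intervals, as in Theorem~\ref{theorem:schippa_decoupling}. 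For the long-time piece the goal is the uniform bound
\[
\sup_{1/C\lesssim|t|\le T}\|K_2(t,\cdot)\|_{L^\infty}\lesssim C^{1+\varepsilon}
\]
for generic (Diophantine) $\beta_1,\beta_2$. This is where the arithmetic enters. By Dirichlet approximation $\beta_jt=a_j/q_j+\theta_j$ with $q_j\le C$ and $|\theta_j|\le 1/(q_jC)$, Weyl's bound gives $|W_C(\beta_jt,\cdot)|\lesssim C^{\varepsilon}\,C/\bigl(\sqrt{q_j}\,(1+C|\theta_j|^{1/2})\bigr)$. The product can exceed $C^{1+\varepsilon}$ only if $\beta_1t$ and $\beta_2t$ are \emph{simultaneously} close to rationals with small denominators. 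That in turn forces $\beta_2/\beta_1$ to be very well approximated by $a_2q_1/(a_1q_2)$, which a Diophantine condition excludes, except possibly on a set of $t$ of small measure. With these two kernel bounds in hand, the level-set inequality gives, for $\lambda\gg C^{1/2+\varepsilon}$,
\[
\lambda^2|E_\lambda|^2\lesssim C^{\varepsilon}|E_\lambda|\,\|g\|_{2}\,\|\,\cdot\,\|_{\text{short}} + C^{1+\varepsilon}|E_\lambda|^2 .
\]
The second term is absorbed once $\lambda^2\gg C^{1+\varepsilon}$. One then obtains $|E_\lambda|\lesssim C^{\varepsilon}\lambda^{-6}\cdot(\text{number of unit time intervals})$, i.e.\ $\lesssim C^{\varepsilon}T\lambda^{-6}$, using the $L^6$ Strichartz bound on unit intervals.

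Finally, integrate $\int\lambda^{p-1}|E_\lambda|\,d\lambda$ over three ranges. For $\lambda\le C^{1/2+\varepsilon}$ use the $L^2$ bound $|E_\lambda|\lesssim T\lambda^{-2}$, which gives at most $TC^{(p-2)/2}\le TC^{p-6}$ since $p>6$. For $C^{1/2}\lesssim\lambda\le C$ use $|E_\lambda|\lesssim C^{\varepsilon}T\lambda^{-6}$, which gives $TC^{p-6+\varepsilon}$ because $p>6$. Together with the short-time contribution $C^{p-4}$, this yields $\|u\|_{L^p}^p\lesssim C^{\varepsilon}(C^{p-4}+TC^{p-6})$, which is the claimed estimate after taking $p$-th roots.

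The main obstacle is the uniform $L^\infty$ bound on the long-time kernel over the whole interval $[1/C,T]$ for arbitrarily large $T$. The simultaneous-major-arc set in $t$ need not be empty, so I would first try to show that its measure is small enough that its contribution, handled separately via Young's inequality in $t$, is dominated by $TC^{p-6}$. This requires a metric Diophantine hypothesis on $(\beta_1,\beta_2)$, namely the ``some Diophantine type assumptions'' in the statement.
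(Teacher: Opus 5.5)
The paper does not prove this statement; it is quoted from Deng--Germain--Guth. The scheme the paper itself uses for the analogous degree-two quasi-periodic case (the proof of Theorem~\ref{theorem:long_time_Strchartz}) is an $L^6$ counting argument followed by interpolation with $L^\infty$. Measured against that, your level-set route has two genuine gaps.

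\textbf{Gap 1: the long-time kernel bound.} The uniform bound $\sup_{1/C\lesssim|t|\le T}\|K_2(t,\cdot)\|_{L^\infty}\lesssim C^{1+\varepsilon}$ is false for \emph{every} diagonal $A$ once $T\gtrsim C^2$, and no Diophantine hypothesis rescues it.
\begin{itemize}
\item By Dirichlet there is an integer $1\le m\lesssim C^2$ with $\|m\beta_2/\beta_1\|\le cC^{-2}$. At $t=m/\beta_1$ one has $\beta_1t\in\Z$ and $\|\beta_2t\|\le cC^{-2}$, so $|K(t,0)|\sim C^2$.
\item In your reduction to $\beta_2/\beta_1\approx a_2q_1/(a_1q_2)$ the denominator is of size $tq_1q_2$. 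A Roth-type condition therefore excludes such times only for $t\lesssim C^{2-\varepsilon}$.
\item These refocusing times are not an exceptional set. There are roughly $T/C^2$ of them in $[0,T]$, and each contributes $\sim C^{p-4}$ to $\|u\|_{L^p}^p$ for the normalized box datum. Together they already produce the term $T^{1/p}C^{1-6/p}$.
\end{itemize}
So what you call the main obstacle is the main term, and your fallback is not carried out. Young's inequality in $t$, even combined with $|\{x:|u(t,x)|>\lambda\}|\le\lambda^{-2}$, only gives $|E_\lambda|\lesssim\lambda^{-4}\int_{B_\lambda}\|K(t,\cdot)\|_{L^\infty}\,dt$, where $B_\lambda=\{t:\|K(t,\cdot)\|_{L^\infty}\gtrsim\lambda^2\}$. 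The proposal contains no estimate for this integral.

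\textbf{Gap 2: the final bookkeeping, decisive for $6<p<10$.}
\begin{itemize}
\item The inequality $TC^{(p-2)/2}\le TC^{p-6}$ holds only for $p\ge 10$, not for all $p>6$.
\item The unit-interval $L^6$ bound gives $|E_\lambda|\lesssim TC^{2+\varepsilon}\lambda^{-6}$, not $C^{\varepsilon}T\lambda^{-6}$.
\item The threshold cannot be lowered. Since $\|K(t,\cdot)\|_{L^\infty}\ge\|K(t,\cdot)\|_{L^2}\sim C$ for every $t$, your absorption step needs $\lambda^2\gg C$.
\item On the levels $\lambda\lesssim C^{1/2}$, both $L^2$ and unit-interval $L^6$ information yield only $TC^{(p-2)/2}$, which exceeds the target when $p<10$.
\end{itemize}
What is missing is a lossless long-time bound
\[
\|u\|_{L^6([0,T]\times\T^2)}^6\lesssim C^{\varepsilon}(C^2+T)\|P_Cf\|_{L^2}^6 .
\]
This bound alone gives the theorem for all $p\ge 6$, after interpolating with $\|u\|_{L^\infty}\lesssim C\|P_Cf\|_{L^2}$.

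\textbf{How to prove the missing bound.} It follows by counting, exactly as in the paper's degree-two argument.
\begin{itemize}
\item Expanding the sixth power produces phases $\Omega=\beta_1m_1+\beta_2m_2$ with $m_1,m_2\in\Z$ and $|m_i|\lesssim C^2$.
\item Because $A$ is diagonal, for fixed $k_1,k_3,k_5$ and fixed $(m_1,m_2)$ there are $\lesssim C^{\varepsilon}$ choices of $(k_2,k_4,k_6)$. This is the Eisenstein-integer divisor bound in each coordinate, as in Case~1 of the proof of Lemma~\ref{lemma:additive_energy}.
\item A Diophantine condition on $\beta_2/\beta_1$ gives $\#\{(m_1,m_2):|\Omega|\le\delta\}\lesssim 1+C^{2+\varepsilon}\delta$.
\item After AM--GM, one gets the bound $C^{\varepsilon}\sum_{(m_1,m_2)}\min(T,|\Omega|^{-1})\lesssim C^{2\varepsilon}(T+C^2)$.
\end{itemize}
The refocusing that breaks your kernel bound is absorbed here, without loss, into the single term $(m_1,m_2)=(0,0)$.
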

Comparing these results with Corollary~\ref{cor:main}, for the case when $\omega$ is of degree $2$ and $T\geq C^2$, our exponents match those in Theorem~\ref{thm:diagonal} and for the case when $\omega$ is of degree greater than $2$ and $p\geq 14$, our exponents match Theorem~\ref{thm:DGGM}.

Finally, we also show the endpoint case of the $L^4$ Strichartz estimate \eqref{eq:Schippa_l4}. The key ingredient for this result is a Littlewood--Paley type estimate for the height projections $R_{C}$ (see Lemma~\ref{lem:littlewood_paley}). We state this result for the fractional Schr\"{o}dinger propagators
\[e^{2\pi i t|D_{x}|^{a}}f(x)
     := \sum_{k\in \Z^{\nu}}
    e^{2\pi i[(\vec{\omega}\cdot k)x + |\vec{\omega}\cdot k|^a t]
    }\widehat{f}(\vec{\omega}\cdot k) \quad (f\in \mathcal{T}_{\vomega}(\R)).  \]
\begin{theorem}
  Let $\nu \geq 2$,
  $\vec{\omega}\in \R^{\nu}$ be non-resonant and $a\in (0,1)\cup(1,\infty)$. 
  Then, the estimate
  \begin{equation}
    \|e^{2\pi i t|D_{x}|^{a}}f(x)\|_{L^{4}_{t}\mathcal{L}^{4}_{x}
    ([0,1]\times\R)}
    \lesssim \|f(x)\|_{\mathcal{H}^{\frac{\nu - 1}{4} + \sigma_{a}}_{\vomega}(\R)} \label{eq:endpoint_schippa}
  \end{equation}
  holds for 
  \begin{align*}
    \sigma_a = \begin{cases}
      0\quad &(a\geq 2),\\
      \frac{2 - a}{8} & (a\in (0,1)\cup (1,2)).
    \end{cases}
  \end{align*}
  \label{thm:endpoint_schippa}
\end{theorem}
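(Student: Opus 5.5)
The plan is to reduce the endpoint estimate \eqref{eq:endpoint_schippa} to a single height block via a Littlewood--Paley inequality for the projections $R_C$. For that block we prove a sharp $L^4$ bound with no $\varepsilon$-loss by counting solutions of a quadratic-type equation. The main obstacle is the summation over $C$: the fixed-$C$ bound $C^{\frac{\nu-1}{4}+\sigma_a}$ alone would give an $\varepsilon$-loss when summing dyadic blocks via the triangle inequality. To avoid this we need the square function bound
\[
\|e^{2\pi i t|D_x|^a}f\|_{L^4_t\mathcal{L}^4_x}\lesssim \Bigl\|\Bigl(\sum_{C\ \mathrm{dyadic}}|e^{2\pi it|D_x|^a}R_Cf|^2\Bigr)^{1/2}\Bigr\|_{L^4_t\mathcal{L}^4_x},
\]
which is the Littlewood--Paley estimate Lemma~\ref{lem:littlewood_paley}. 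Via the Kronecker--Weyl identity it becomes a statement on $\T^\nu$ in the variable $y$ for each fixed $t$. Since $R_C$ is a radial dyadic projection in $k\in\Z^\nu$, it is a smooth-enough Littlewood--Paley decomposition on $\T^\nu$. Because the time evolution is a unimodular multiplier in $k$, it does not interfere, and the torus lemma applied for each $t$ gives the bound. Squaring and using Minkowski in $L^2$, we have $\|(\sum_C|g_C|^2)^{1/2}\|_{4}^2\le\sum_C\|g_C\|_4^2$. It therefore suffices to prove, uniformly in $C$,
\[
\|e^{2\pi it|D_x|^a}R_Cf\|_{L^4_t\mathcal{L}^4_x([0,1]\times\R)}\lesssim C^{\frac{\nu-1}{4}+\sigma_a}\|R_Cf\|_{\mathcal{L}^2},
\]
after which $\sum_C C^{2s}\|R_Cf\|_2^2\sim\|f\|_{\mathcal{H}^s}^2$ finishes the proof.

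For the single block, we expand the fourth power. The $x$-average over $\R$, by non-resonance, forces $\lambda_1+\lambda_2=\lambda_3+\lambda_4$ exactly (equivalently $k_1+k_2=k_3+k_4$). We then bound the $t$-integral over $[0,1]$ by inserting a Schwartz weight $\phi(t)$ with $\hat\phi$ compactly supported. This turns the $t$-integral into $\hat\phi(|\lambda_1|^a+|\lambda_2|^a-|\lambda_3|^a-|\lambda_4|^a)$, which localizes to $\bigl||\lambda_1|^a+|\lambda_2|^a-|\lambda_3|^a-|\lambda_4|^a\bigr|\lesssim1$. Writing $b_\lambda=\hat f(\lambda)$ and grouping by $\mu=\lambda_1+\lambda_2$, Cauchy--Schwarz gives
\[
\|\cdot\|_4^4\lesssim \sup_{\lambda_1,\lambda_2}\#\{(\lambda_3,\lambda_4):\lambda_3+\lambda_4=\lambda_1+\lambda_2,\ \bigl|\,|\lambda_3|^a+|\lambda_4|^a-|\lambda_1|^a-|\lambda_2|^a\bigr|\lesssim1\}\cdot\|R_Cf\|_2^4.
\]
This follows the Córdoba--Fefferman/bilinear counting route, but without decoupling, so no $\varepsilon$ appears.

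The counting is the core step. Fix $\mu=\lambda_1+\lambda_2$ and write $\lambda_3=\mu/2+h$. The function $h\mapsto|\mu/2+h|^a+|\mu/2-h|^a$ has derivative roughly of size $|a|\,|h|\,N^{a-2}$ near the symmetric point, with $N=\max|\lambda_i|$. For $a\ge2$ this gives a convexity bound: the constraint confines $h$, and hence $\lambda_3$, to a set of $O(1)$ intervals of length $\lesssim1$ (away from degenerate regions). For $a<2$, with frequencies up to size $\sim C$ (since $|\lambda|\lesssim|k|\sim C$), the interval length becomes $\lesssim C^{(2-a)/2}$. In each interval of length $\ell$ the number of $k\in\Z^\nu$ with $|k|\sim C$ and $k\cdot\vomega$ in that interval is $\lesssim \ell\, C^{\nu-1}$, by the rectangle covering argument already discussed. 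This yields the count $C^{\nu-1}C^{(2-a)/2}$, and taking fourth roots gives $C^{\frac{\nu-1}{4}+\frac{2-a}{8}}$, matching $\sigma_a$.

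I expect the main obstacle to be making this counting rigorous near degenerate configurations. These include $\lambda$'s near $0$, where $|\lambda|^a$ is non-smooth for $a<1$, and opposite-sign pairs, where $a=1$ is genuinely degenerate (hence its exclusion). There one must split into cases by sign and size, bounding the sublevel set $\{h:|g(h)-c|\lesssim1\}$ via the second derivative $g''(h)\sim N^{a-2}$, or via the first derivative when $h$ is large. A secondary but real point is verifying that Lemma~\ref{lem:littlewood_paley} holds in $\mathcal{L}^4$ for the sharp cutoffs $R_C$, which we take as given.
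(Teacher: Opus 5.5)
Your argument is correct, but for the single height block it takes a different route from the paper.

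\textbf{The paper's route.} The paper never expands the fourth power. It splits $R_Cf$ further into frequency pieces $P_NR_Cf$ with $N\lesssim C$, rescales, and localizes with Schwartz weights (Lemma~\ref{lemma:weighted_lp}). It then applies the reverse square function estimate for $(\xi,|\xi|^a)$ (Proposition~\ref{prop:reverse_square_gen}), which reduces matters to frequency strips of width $1$ ($a\ge2$) or $N^{1-a/2}$ ($a<2$). Bernstein (Lemma~\ref{lemma:Bernstein}) plus the lattice count in a strip gives $C^{\frac{\nu-1}{4}}(1+N^{\sigma_a})$. Finally, Lemma~\ref{lem:LP_Schippa} and Lemma~\ref{lem:littlewood_paley} sum over $N$ and then $C$.

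\textbf{Your route.} You replace the reverse square function estimate and the $P_N$ decomposition by the expand-and-count method the paper itself uses for $L^6$ in Sections 4--5.

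\textbf{The sublevel-set step you flag does go through.} For $a>1$ and $h\ge0$,
$g'(h)=a(a-1)\int_{\mu/2-h}^{\mu/2+h}|s|^{a-2}\,ds$.
This is $\ge 2ah^{a-1}$ when $a\ge2$, and $\gtrsim hC^{a-2}$ when $1<a<2$ (because $|\mu|/2+|h|\lesssim C$). Hence each monotone branch of $\{|g-c|\lesssim 1\}$ has length $O(1)$, respectively $O(C^{1-a/2})$. For $0<a<1$, the same-sign range $|h|<|\mu|/2$ still satisfies $|g'(h)|\gtrsim |h|C^{a-2}$. On the opposite-sign range $|g'|\gtrsim C^{a-1}$, which gives length $O(C^{1-a})\le O(C^{1-a/2})$. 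There are $O(1)$ branches, and the slab count is $(\ell+1)C^{\nu-1}$ rather than $\ell C^{\nu-1}$. Together these give $\lesssim(1+C^{1-a/2})C^{\nu-1}$, as you claim.

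\textbf{What each route buys.} Yours is shorter and self-contained for the block estimate. Restricting to $|\lambda_i|\lesssim N$ would also recover the paper's finer per-block factor $1+N^{\sigma_a}$. The paper's route instead yields a genuine lossless square-function inequality over frequency strips. That is, it upgrades Schippa's C\'{o}rdoba--Fefferman argument itself to the endpoint, rather than relying on the exponent being even.

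\textbf{On sharp cutoffs.} $R_C$ is a sharp, not smooth, cutoff. Even so, the one-sided inequality you need (the paper's Lemma~\ref{lem:littlewood_paley}) holds, by duality against smooth Littlewood--Paley multipliers equal to $1$ on each annulus.
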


The organization of this paper is as follows: In Section 2, we recall useful results from analytic number theory and prepare some basic estimates. In Section 3, some examples for the Strichartz estimate are given. In Section 4, Section 5, and Section 6, we prove Theorem~\ref{theorem:long_time_Strchartz}, Theorem~\ref{theorem:degree3}, and Theorem~\ref{thm:endpoint_schippa} respectively. 

\subsection*{Notations}
\begin{itemize}
  \item $A\lesssim B$ (resp. $A\gtrsim B$) denotes the inequality $A \leq cB$ (resp. $A\geq cB$) with a harmless constant. If $A\lesssim B$ and $A\gtrsim B$ at the same time, we write $A\sim B$. 
  \item $A\lesssim_{\varepsilon}B$ denotes the inequality $A \leq c(\varepsilon)B$ with a harmless constant depending on the parameter $\varepsilon$. 
  \item For $1\leq p < \infty$ and $A\subset\R$, $L^{p}$-norm for Lebesgue measurable functions $f:A\to\C$ is defined as 
  \[
  \|f\|_{L^p (A)}:= \Bigl(\int_{A}|f(x)|^p dx\Bigr)^{\frac{1}{p}}
  \]
  and for $p = \infty$, 
  \[
  \|f\|_{L^{\infty}(A)}:= \underset{x\in A}{\mathrm{ess.sup}}|f(x)|
  \]
  We may write $\|f(x)\|_{L^{p}_{x}}$ to denote them. We use the following notation to write the mixed-norm: 
  \[
  \|f(t,x)\|_{L^{p}_{t,x}}:= \|\|f(t,x)\|_{L^{p}_{t}}\|_{L^{p}_{x}}. 
  \]
\end{itemize}

\section{Preliminaries}
\subsection{Tools from analytic number theory}
We repeatedly use Roth's theorem and its multivariate extension; see, for example, Chapter 7 of \cite{Bombieri_Gubler}. 
\begin{theorem}[Roth's theorem]
  Let $\alpha$ be an irrational algebraic number. Then, for every $\varepsilon > 0$
  there exists $C(\alpha,\varepsilon) > 0$ such that 
  every pair of integers $(p,q)\in\Z^2$ with $q\neq 0$ satisfies    
  \[\Bigl|\alpha - \frac pq \Bigr| \geq \frac{C(\alpha,\varepsilon)}{|q|^{2 + \varepsilon}}.\]
  \label{theorem:Roth}
\end{theorem}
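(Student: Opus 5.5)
Since this is a classical result that the paper cites from Chapter~7 of \cite{Bombieri_Gubler} rather than proves, I would follow the standard Thue--Siegel--Roth scheme. The first step is a reduction. It suffices to show that for fixed $\varepsilon>0$ the inequality $|\alpha - p/q| < |q|^{-2-\varepsilon}$ has only finitely many solutions with $\gcd(p,q)=1$. Indeed, every excluded pair has $|\alpha - p/q|>0$ because $\alpha$ is irrational, so taking the minimum over these finitely many exceptions, together with $1$, yields $C(\alpha,\varepsilon)$. Non-coprime pairs reduce to coprime ones, since dividing out the common factor only shrinks $|q|$ and so only strengthens the inequality. We may also assume $\alpha$ is an algebraic integer of degree $d\ge 2$, after replacing $\alpha$ by $a\alpha$ with $a$ its leading coefficient.

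Suppose, for contradiction, that there are infinitely many good approximations. Choose a large number $m$ of them, $p_1/q_1,\dots,p_m/q_m$, with very rapidly increasing heights: $\log q_{j+1}/\log q_j$ is huge, and $q_1$ is large in terms of $m$ and $\varepsilon$. Choose integers $r_j$ with $r_j\log q_j$ roughly constant, equal to $r_1\log q_1$.

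The second step is to construct, via Siegel's lemma, a nonzero auxiliary polynomial $P(X_1,\dots,X_m)\in\Z[X]$ with $\deg_{X_j}P\le r_j$ and controlled height $\le B^{r_1+\dots+r_m}$. We require $P$ to vanish to high weighted index at $(\alpha,\dots,\alpha)$: every derivative $\partial^{i_1,\dots,i_m}P$ with $\sum i_j/r_j \le m(1/2-\delta)$ vanishes there. The concentration of measure for $\sum i_j/r_j$ (a law-of-large-numbers count) shows that the number of linear conditions is a small fraction of the number of coefficients, so Siegel's lemma applies.

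The third step is the upper bound. Taylor-expand a suitable derivative $Q=\partial^{\mathbf i}P$ at $(\alpha,\dots,\alpha)$ and evaluate it at $(p_1/q_1,\dots,p_m/q_m)$. The high index at $\alpha$, combined with $|\alpha - p_j/q_j|<q_j^{-2-\varepsilon}$, makes $|Q(p/q)|$ smaller than $\prod_j q_j^{-r_j}$. On the other hand, $Q(p/q)\prod_j q_j^{r_j}$ is an integer. So if $Q(p/q)\neq 0$ we get a contradiction, provided we choose $\delta$ small relative to $\varepsilon$.

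The main obstacle is the fourth step, \emph{Roth's lemma}: showing that $P$ has small index at the rational point $(p_1/q_1,\dots,p_m/q_m)$. This is what guarantees that some derivative of low order is nonzero there. I would prove it by induction on $m$ using generalized Wronskians. The key input is that a polynomial in $m$ variables that is not identically zero has some nonvanishing generalized Wronskian, and that this Wronskian factors into a product of a polynomial in $X_1,\dots,X_{m-1}$ and a polynomial in $X_m$. Gauss's lemma then controls the heights of these factors. The rapid growth of the $q_j$ and the height bound on $P$ are used to show that the index cannot be large; this is exactly where the growth conditions must be tuned. Combining the small index at $p/q$ with the third step yields the contradiction, and hence the finiteness claimed in the first step.
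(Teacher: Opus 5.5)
Your proposal is correct and is the classical Thue--Siegel--Roth argument: the reduction to finitely many coprime solutions, the auxiliary polynomial from Siegel's lemma with weighted index exceeding $m(\tfrac12-\delta)$ at $(\alpha,\dots,\alpha)$, the contradiction between integrality of $Q(p/q)\prod_j q_j^{r_j}$ and the Taylor-expansion upper bound, and Roth's lemma via generalized Wronskians and Gauss's lemma are the standard ingredients. The paper does not prove the theorem but cites Bombieri--Gubler, which proves it by this same standard method, so your route agrees with the cited one.
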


\begin{theorem}
  Suppose $\alpha_1, \cdots, \alpha_{n}\in \R$ are algebraic real numbers and
  are linearly independent over $\Q$. For every $\varepsilon > 0$, there exists
  a constant $C(\alpha_1, \cdots, \alpha_n, \varepsilon)$ such that
  for any nonzero integer tuple $(x_1, \cdots, x_n)\in \Z^n$, we have
  \begin{equation}
    |\alpha_1 x_1 + \cdots + \alpha_n x_n| 
    \geq C(\alpha_1, \cdots, \alpha_n, \varepsilon)
    \max\{1, |x_1|, \cdots, |x_n|\}^{1-n-\varepsilon}
    \label{eq:multi_roth}
  \end{equation}
  \label{theorem:multi_roth}
\end{theorem}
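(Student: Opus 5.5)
The plan is to deduce this from Schmidt's subspace theorem (see Chapter 7 of \cite{Bombieri_Gubler}), arguing by induction on $n$. Throughout, write $X := \max\{1, |x_1|, \cdots, |x_n|\}$.

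For the base case $n=1$, Theorem~\ref{theorem:multi_roth} is trivial. Linear independence forces $\alpha_1 \neq 0$, so $|\alpha_1 x_1| \geq |\alpha_1|$ for every nonzero $x_1\in\Z$. (The case $n=2$ is exactly Roth's theorem, Theorem~\ref{theorem:Roth}, after dividing by $x_2$, which serves as a consistency check.)

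For the induction step, fix $n\geq 2$. Since the $\alpha_i$ are $\Q$-independent, some $\alpha_j\neq 0$; relabel so that $j=1$. Consider the $n$ linear forms
\[
L_1(x) = \alpha_1 x_1 + \cdots + \alpha_n x_n, \qquad L_i(x) = x_i \quad (2\le i\le n).
\]
They have algebraic coefficients and are linearly independent, because their determinant is $\alpha_1\neq 0$. Suppose $x\in\Z^n\setminus\{0\}$ satisfies $|L_1(x)| < X^{1-n-\varepsilon}$. Then
\[
\prod_{i=1}^n |L_i(x)| \le |L_1(x)|\, X^{n-1} < X^{-\varepsilon}.
\]
By the subspace theorem, all such $x$ lie in a finite union of proper rational subspaces $V_1,\dots,V_m\subsetneq\Q^n$. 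It therefore suffices to prove \eqref{eq:multi_roth} for integer points of each fixed $V_j$; taking the minimum of the finitely many resulting constants, together with the constant $1$ off these subspaces, then finishes the proof.

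Fix such a $V=V_j$ of dimension $d<n$ with $d\geq 1$, and choose an integral basis $v_1,\dots,v_d$ of the lattice $V\cap\Z^n$. Each integer point is $x=\sum_{l} y_l v_l$ with $y\in\Z^d$. Since $V$ is fixed, $\max_l|y_l|\sim_V X$, with constants depending only on $V$. The restricted form is
\[
L_1(x)=\sum_{l=1}^d \beta_l y_l, \qquad \beta_l := \sum_i \alpha_i (v_l)_i .
\]
The $\beta_l$ are algebraic. They are also linearly independent over $\Q$: a rational relation $\sum_l c_l \beta_l = 0$ would give $\sum_i \alpha_i \bigl(\sum_l c_l v_l\bigr)_i = 0$, which forces $\sum_l c_l v_l = 0$ by the $\Q$-independence of the $\alpha_i$, and hence $c=0$. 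The induction hypothesis in dimension $d$ then gives
\[
|L_1(x)| \gtrsim_{V,\varepsilon} \max_l|y_l|^{1-d-\varepsilon} \gtrsim X^{1-d-\varepsilon} \geq X^{1-n-\varepsilon}.
\]

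The main obstacle is the subspace theorem itself, which is a deep input that I would simply cite rather than reprove. Beyond that, the only delicate point is bookkeeping: checking that the constants stay uniform. This works because there are only finitely many exceptional subspaces, each contributing a single constant depending on $V$ through the comparison $\max_l|y_l|\sim_V X$.
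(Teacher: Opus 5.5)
Your proposal is correct. Using the forms $L_1=\alpha\cdot x$ and $L_i=x_i$ for $i\ge 2$, you apply Schmidt's Subspace Theorem and then induct on the finitely many exceptional rational subspaces, checking that the restricted coefficients $\beta_l$ stay algebraic and $\Q$-independent and that $\max_l|y_l|\lesssim_V X$; this is the standard derivation. The paper gives no proof of its own and simply cites Chapter 7 of Bombieri--Gubler, where the statement is obtained from the Subspace Theorem in essentially this way, so your argument supplies the intended route.
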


In the proof of Theorem~\ref{theorem:long_time_Strchartz} and Theorem~\ref{theorem:degree3}, we need to count the number of lattice points on certain ovals. The following result gives a useful bound for this purpose:
\begin{theorem}[Theorem 3 in \cite{BP_counting_integers}]
  Suppose $\phi:\mathbb{S}^1\to\R^2$ is analytic. Then for all $\varepsilon > 0$, 
  \[
  \#(r\phi(\mathbb{S}^1)\cap \Z^2)\lesssim_{\varepsilon,\phi}r^{\varepsilon}. 
  \]
  \label{thm:BP_counting}
\end{theorem}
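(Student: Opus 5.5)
The plan is to use the Bombieri--Pila determinant method, splitting according to whether the curve $\Gamma:=\phi(\mathbb{S}^1)$ is transcendental or algebraic. First, by compactness of $\mathbb{S}^1$ and analyticity, cover $\Gamma$ by finitely many (depending only on $\phi$) arcs. On each arc, after a rotation by a rational angle close to the identity, the arc is the graph $y=g(x)$, $x\in I$, of a real-analytic function on a compact interval with uniformly controlled derivatives $|g^{(j)}|\le A^{j}j!$. It then suffices to bound the number of lattice points on $r\cdot\mathrm{graph}(g)$, i.e. on the graph of $G(u)=rg(u/r)$ for $u\in rI$. This rescaled function satisfies $|G^{(j)}(u)|\le A^j j!\,r^{1-j}$.

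For a fixed degree $d$, set $D=\binom{d+2}{2}$ and cut $rI$ into subintervals of length $\rho=r^{1-\theta(d)}$, with $\theta(d)\to 1$ as $d\to\infty$ chosen below. On one subinterval, take any $D$ lattice points $(u_i,v_i)$ on the graph and form the determinant $\Delta=\det\big(u_i^a v_i^b\big)_{a+b\le d}$. Taylor expanding $v=G(u)$ around a point of the subinterval, the standard Bombieri--Pila computation (a generalized Vandermonde estimate) gives
\[
|\Delta|\lesssim_{d,A} r^{e_1}\rho^{e_2}
\]
with explicit exponents. Since $\Delta\in\Z$, choosing $\rho$ so that the right-hand side is $<1$ forces $\Delta=0$. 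Hence all lattice points in that subinterval lie on one algebraic curve $Q=0$ of degree $\le d$. The number of subintervals is $\lesssim r/\rho = r^{\theta(d)}$; the computation yields $\theta(d)\approx 8/(3(d+3))$, which is $<\varepsilon$ for $d$ large.

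In the transcendental case, $\Gamma$ is not contained in any algebraic curve. For each auxiliary curve $Q$ of degree $\le d$, the analytic function $Q(u,G(u))$ is not identically zero, so its zeros on $I$ are counted by a bound depending only on $d$ and $\phi$. This uses analyticity together with compactness of the family of normalized polynomials of degree $\le d$ (a Gabrielov/Jensen-type argument). Summing over the $r^{\theta(d)}$ subintervals gives $\lesssim_{d,\phi} r^{\theta(d)}\le r^{\varepsilon}$.

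The main obstacle is the algebraic case. If $\Gamma$ lies on an irreducible algebraic curve, the determinant step gives nothing, and genuinely the bound can fail: a scaled parabolic arc carries $\sim r^{1/2}$ lattice points, and a degenerate $\phi$ tracing a segment carries $\sim r$. So here I would first need to read the precise hypotheses of Theorem 3 in \cite{BP_counting_integers}; I expect they amount to $\Gamma$ being a transcendental analytic curve, or a closed curve not contained in a line or a curve of low genus with many rational points. For the applications in this paper, the curves are level sets of definite quadratic forms, $\{Q(k)=r^2\}$. For these I would bypass the theorem entirely: lattice points correspond to representations by a binary quadratic form, which are $\lesssim_\varepsilon r^{\varepsilon}$ by the divisor bound in the associated imaginary quadratic field. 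The remaining algebraic closed curves (irreducible of degree $\ge 3$, bounded) I would try first to handle via the Bombieri--Pila bound $r^{1/d+\varepsilon}$ iterated with the determinant method; this is where I am least sure the full $r^{\varepsilon}$ claim holds as stated.
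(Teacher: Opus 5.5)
There is no proof in the paper to compare with, since the statement is quoted from Bombieri--Pila. As printed it is false, exactly as you suspected, so the algebraic branch of your plan cannot be completed and should be dropped rather than attempted.

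For $d\ge 1$ take $\phi(\cos\theta,\sin\theta)=(\cos\theta,\cos^{d}\theta)$. This is an analytic map tracing an arc of $y=x^{d}$ back and forth. For $r=m^{d}$ the $2m+1$ points $(jm^{d-1},j^{d})$, $|j|\le m$, lie on $r\phi(\mathbb{S}^1)$, giving at least $2r^{1/d}+1$ lattice points. The cases $d=1,2$ are your segment and parabola examples, and $d\ge 3$ settles your last question negatively.

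The missing hypothesis is that $\phi(\mathbb{S}^1)$ is not contained in any algebraic curve. Under it, your transcendental-case argument is the Bombieri--Pila determinant method itself, with their exponent $8/(3(d+3))$.

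One step of that sketch does not work as written. Compactness and analyticity do not give a finite cover of $\phi(\mathbb{S}^1)$ by graphs $y=g(x)$ with $|g^{(j)}|\le A^{j}j!$: at a parameter where $\phi'$ vanishes the image can have a cusp, with branches like $y=x^{3/2}$. The easiest fix is to form the determinant in the parameter $\theta$ instead.
\begin{itemize}
\item Take $\theta_1,\dots,\theta_D$ in an arc of length $\rho$ and set $\psi_{a,b}=r^{a+b}\phi_1^{a}\phi_2^{b}$.
\item The mean-value form of the generalized Vandermonde lemma gives $|\det(\psi_{a,b}(\theta_i))|\lesssim_{d,\phi}r^{d(d+1)(d+2)/3}\rho^{D(D-1)/2}$.
\item Since $D(D-1)/2=d(d+1)(d+2)(d+3)/8$, this is $<1$ once $\rho=c\,r^{-8/(3(d+3))}$. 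No rotation or graph structure is needed.
\end{itemize}
Your zero count then goes through. Let $\widetilde Q(x,y)=Q(rx,ry)$, normalized. The function $\theta\mapsto\widetilde Q(\phi(\theta))$ is not identically zero, by transcendence and the identity theorem. Hurwitz's theorem plus compactness of the unit sphere of polynomials of degree $\le d$ then bounds its zeros on $\mathbb{S}^1$ uniformly.

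Finally, your bypass for the applications is not merely convenient but necessary. The paper invokes this theorem only in Cases 2-1 and 2-2 of Lemma~\ref{lemma:additive_energy} (and in the footnote to Case 1). Each time the curve is an ellipse, such as $aX^2+12a|c|Y^2=R$ or $X^2+12a^2DY^2=R'$. These are algebraic, so the correctly stated $r^{\varepsilon}$ theorem does not apply, and the Bombieri--Pila bound for algebraic curves gives only $r^{1/2+\varepsilon}$ for conics.

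Your arithmetic argument supplies what is needed. A solution of $AX^2+BY^2=R\neq 0$, with integers $A,B>0$, gives an element $AX+Y\sqrt{-AB}$ of norm $AR$ in the ring of integers of $\Q(\sqrt{-AB})$. Hence there are at most $6\tau(AR)\lesssim_{\varepsilon,A}|R|^{\varepsilon}$ solutions, and $R=0$ gives only the origin. Since $|R|\lesssim C^{4}$, this is the required $C^{\varepsilon}$ bound, and it is the same device the paper already uses with Eisenstein integers in Case 1.
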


\subsection{Basic estimates}
Throughout this subsection, the following Kronecker-Weyl type identity plays an 
important role. 
\begin{lemma}[Kronecker-Weyl]
  Let $\nu \geq 2$, $\vec{\omega}\in\R^\nu$ be non-resonant, and $p\in [1,\infty)$. 
  Then for any trigonometric polynomial 
  $\sum_{k\in \Lambda}a_{k}e^{2\pi i (\vec{\omega}\cdot k)x}\in 
  \mathcal{T}_{\vec{\omega}}(\R)$ ($\Lambda\subset \Z^\nu$ is a finite subset), 
  we have 
  \[\Bigl\|\sum_{k\in\Lambda}a_{k}e^{2\pi i (\vec{\omega}\cdot k)x}\Bigr\|_{\mathcal{L}^{p}_{x}(\R)} 
  = \Bigl\|\sum_{k\in \Lambda}a_{k}e^{2\pi i k\cdot y}\Bigr\|_{L^p_{y}(\T^\nu)}\]
  \label{lemma:kronecker_weyl}
\end{lemma}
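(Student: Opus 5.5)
We prove the identity by expanding both sides into finite exponential sums, reducing first to even exponents and then handling general $p$ through Weyl equidistribution of the linear flow $x\mapsto x\vec{\omega}$ on $\T^\nu$. Set $F(y):=\sum_{k\in\Lambda}a_k e^{2\pi i k\cdot y}$, a trigonometric polynomial on $\T^\nu$. Our function is then $f(x)=F(x\vec{\omega})$, where $x\vec{\omega}$ is read modulo $\Z^\nu$. The claim therefore becomes
\[
\lim_{L\to\infty}\frac{1}{2L}\int_{-L}^{L}G(x\vec{\omega})\,dx=\int_{\T^\nu}G(y)\,dy
\]
for $G=|F|^p$. We also want to know that the limit exists, so that the $\limsup$ in the definition of $\|\cdot\|_{\mathcal{L}^p}$ is a genuine limit.

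First we establish the equidistribution identity for characters $G(y)=e^{2\pi i m\cdot y}$ with $m\in\Z^\nu$. For $m=0$ both sides equal $1$. For $m\neq 0$, non-resonance gives $\lambda:=m\cdot\vec{\omega}\neq 0$. The average
\[
\frac{1}{2L}\int_{-L}^{L}e^{2\pi i\lambda x}\,dx=\frac{\sin(2\pi\lambda L)}{2\pi\lambda L}
\]
tends to $0$, which matches $\int_{\T^\nu}e^{2\pi i m\cdot y}\,dy=0$. By linearity the identity then holds for every trigonometric polynomial $G$ on $\T^\nu$. In particular it settles even integer $p=2j$: expanding $|F|^{2j}=F^{j}\overline{F}^{j}$ gives a trigonometric polynomial.

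Next we extend to general $p\in[1,\infty)$. The function $G=|F|^p$ is continuous on $\T^\nu$. By Stone--Weierstrass (or Fejér means), for each $\delta>0$ there is a trigonometric polynomial $P$ with $\|G-P\|_{L^\infty(\T^\nu)}<\delta$. Then
\[
\Bigl|\frac{1}{2L}\int_{-L}^{L}G(x\vec{\omega})\,dx-\int_{\T^\nu}G\,dy\Bigr|\le 2\delta+\Bigl|\frac{1}{2L}\int_{-L}^{L}P(x\vec{\omega})\,dx-\int_{\T^\nu}P\,dy\Bigr|.
\]
The last term tends to $0$ as $L\to\infty$ by the previous step. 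Since $\delta$ is arbitrary, the limit exists and equals $\int_{\T^\nu}|F|^p\,dy$. Taking $p$-th roots yields the lemma.

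The only step needing care is the approximation of the continuous function $|F|^p$ uniformly by trigonometric polynomials, together with the interchange it implies. Uniform convergence makes this interchange harmless, so we do not expect a serious obstacle. The essential input is the non-resonance of $\vec{\omega}$, which guarantees $m\cdot\vec{\omega}\neq 0$ for every $m\neq 0$.
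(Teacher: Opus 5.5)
Your proposal is correct and follows essentially the same route as the paper: write $|f|^p$ as a continuous function on $\T^\nu$ evaluated along the linear flow $x\vec{\omega}$, approximate it uniformly by trigonometric polynomials, and use non-resonance to see that the averages of nonzero characters vanish. Your explicit $2\delta$ bound, together with your remark that the $\limsup$ is a genuine limit, fills in what the paper leaves to ``the standard argument.''
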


\begin{proof}
  Note that for 
  $\Bigl|\sum_{k\in\Lambda}a_{k}e^{2\pi i (\vec{\omega}\cdot k)x}\Bigr|^p
  = \Bigl|\sum_{k\in\Lambda}a_{k}e^{2\pi i (\vec{\omega} x)\cdot k}\Bigr|^p$, 
  there exists a continuous function $F\in C(\T^\nu)$ such that 
  \[\Bigl|\sum_{k\in\Lambda}a_{k}e^{2\pi i (\vec{\omega} x)\cdot k}\Bigr|^p
   = F(\omega_1 x, \cdots, \omega_\nu x). \]
   From the Weierstrass approximation theorem, for any $\varepsilon > 0$, there
   exists finite number of coefficients $b_{k}$ such that 
   \[\sup_{y\in \T^\nu}\Bigl|F(y) - \sum_{k}b_{k}e^{2\pi i k\cdot y}\Bigr| 
   < \varepsilon. \]
   Hence, by the standard argument, it is enough to show 
   \[\limitsup{L}{\infty}\frac{1}{2L} 
   \int^{L}_{-L}\sum_{k}b_{k}e^{2\pi i (\vec{\omega}\cdot k)x}dx
   = \int_{\T^\nu}\sum_{k}b_{k}e^{2\pi i k\cdot y}dy. \]
   Since $\vec{\omega}\in \R^\nu$ is non-resonant, we have
  \begin{align*}
  \limsup_{L\to \infty}\frac{1}{2L}\int^{L}_{-L}e^{2\pi i(w\cdot k)x}dx = 
  \begin{cases}
    1 &\quad (k =0),\\
    0 &\quad (k\neq 0). 
  \end{cases}
\end{align*}
Hence, it follows that 
\[\limitsup{L}{\infty}\frac{1}{2L} 
   \int^{L}_{-L}\sum_{k}b_{k}e^{2\pi i (\vec{\omega}\cdot k)x}dx = b_{0}. \]
On the other hand, we also have
\begin{align*}
  \int_{\T^\nu}e^{2\pi i k\cdot y}dy = 
  \begin{cases}
    1 &\quad (k =0),\\
    0 &\quad (k\neq 0). 
  \end{cases}
\end{align*}
Therefore, it holds that 
\[\int_{\T^\nu}\sum_{k}b_{k}e^{2\pi i k\cdot y}dy = b_0.\]
This completes the proof. 
\end{proof}

We establish a Littlewood--Paley-type inequality associated with the height projections. We need it to prove the endpoint $L^4$ estimates. 

\begin{lemma}[Littlewood--Paley type inequality]\label{prop:LP}
Let $p\in [2,\infty)$ and let $\vomega\in \R^{\nu}$ be non-resonant. For any trigonometric polynomial $f\in \mathcal{T}_{\vec{\omega}}(\R)$, we have
\[
  \left(\limsup_{L\to\infty}\frac{1}{2L}\int_{-L}^{L}\bigl|f(x)\bigr|^p\,dx\right)^{\frac{1}{p}}
  \lesssim
  \left(\sum_{N\in 2^{\N}}
    \left(\limsup_{L\to\infty}\frac{1}{2L}\int_{-L}^{L}\bigl|R_{N}f(x)\bigr|^p\,dx\right)^{\frac{2}{p}}
  \right)^{\frac 12}. 
\]
\label{lem:littlewood_paley}
\end{lemma}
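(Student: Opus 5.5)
By Lemma~\ref{lemma:kronecker_weyl}, I will move the whole inequality to the torus $\T^\nu$. Write $f=\sum_{k\in\Lambda}a_k e^{2\pi i(\vomega\cdot k)x}$ and set $F(y)=\sum_{k\in\Lambda}a_k e^{2\pi i k\cdot y}$. Then $\|f\|_{\mathcal{L}^p}=\|F\|_{L^p(\T^\nu)}$. The height projection becomes $R_N f \leftrightarrow \widetilde R_N F:=\sum_{N/2\le |k|<N}a_k e^{2\pi i k\cdot y}$, so $\|R_Nf\|_{\mathcal{L}^p}=\|\widetilde R_N F\|_{L^p(\T^\nu)}$. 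The claim is therefore the torus inequality
\[
\|F\|_{L^p(\T^\nu)}\lesssim\Bigl(\sum_{N}\|\widetilde R_N F\|_{L^p(\T^\nu)}^2\Bigr)^{1/2},
\]
where $\widetilde R_N$ is the sharp Fourier projection onto the dyadic annulus $\{N/2\le|k|<N\}$ of $\Z^\nu$. The term $k=0$ is either absorbed in the $N=1$ block or handled trivially, depending on the convention for $N\in 2^{\N}$.

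The second step is a Littlewood--Paley square function estimate on $\T^\nu$. For $2\le p<\infty$ it gives $\|F\|_{L^p}\lesssim \|(\sum_N |\widetilde R_N F|^2)^{1/2}\|_{L^p}$. The sharp annular projections are not directly suitable, since sharp ball multipliers are unbounded on $L^p$ for $\nu\ge2$. To get around this I will use smooth Littlewood--Paley pieces $\Delta_N$, with symbol $\psi(k/N)$ supported in $\{N/4\le|k|\le 2N\}$ and $\sum_N\psi(k/N)=1$ for $k\neq0$. The pieces $\Delta_N$ are bounded on $L^p(\T^\nu)$ uniformly, by transference of Mikhlin multipliers or by direct kernel estimates. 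The standard square function bound $\|F\|_p\lesssim\|(\sum_N|\Delta_NF|^2)^{1/2}\|_p$ holds; it follows from the vector-valued (Rademacher/Khintchine) argument applied to the multipliers $\sum_N \pm\psi(k/N)$, which satisfy Mikhlin bounds uniformly in the signs.

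The third step passes from smooth to sharp blocks. Since $\psi(\cdot/N)$ only sees annuli $N'\in\{N/2,N,2N\}$, we have $\Delta_N F=\Delta_N(\widetilde R_{N/2}F+\widetilde R_NF+\widetilde R_{2N}F)$. Uniform $L^p$ boundedness of $\Delta_N$ then gives $\|\Delta_NF\|_p\lesssim\sum_{|j|\le1}\|\widetilde R_{2^jN}F\|_p$. Finally, because $p\ge2$ means $p/2\ge1$, Minkowski's inequality in $L^{p/2}$ yields
\[
\Bigl\|\Bigl(\sum_N|\Delta_NF|^2\Bigr)^{1/2}\Bigr\|_p=\Bigl\|\sum_N|\Delta_NF|^2\Bigr\|_{p/2}^{1/2}\le\Bigl(\sum_N\|\Delta_NF\|_p^2\Bigr)^{1/2}\lesssim\Bigl(\sum_N\|\widetilde R_NF\|_p^2\Bigr)^{1/2}.
\]
Combining this with the first two steps and transferring back via Lemma~\ref{lemma:kronecker_weyl} proves the lemma.

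I expect the main obstacle to be the technical point of the second step: making sure the smooth multiplier bounds hold on $\T^\nu$ with constants independent of $N$ and of the signs. I would handle it by de Leeuw-type transference from $\R^\nu$, or by periodizing the Schwartz kernels $N^\nu\check\psi(N\cdot)$ and checking $L^1$ bounds. The key structural point is that we never need the sharp projections $\widetilde R_N$ to be bounded: they appear only on the right-hand side, and the sharp-to-smooth comparison only requires boundedness of $\Delta_N$.
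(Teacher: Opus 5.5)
Your proposal is correct and follows the paper's route: transfer to $\T^\nu$ via the Kronecker--Weyl identity, apply Littlewood--Paley on the torus, and transfer each block back. The paper cites the torus inequality with sharp annular blocks as a black box, and your smooth-to-sharp comparison plus Minkowski in $L^{p/2}$ (where $p\ge 2$ is used) is the justification it leaves out. One correction: the zero mode lies in no annulus $N/2\le|k|<N$, so it must be put into the lowest block rather than handled trivially, since otherwise constants violate the stated inequality.
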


\begin{proof}
Since $f$ is a trigonometric polynomial, we can apply Lemma~\ref{lemma:kronecker_weyl}
and obtain
\[\|f\|_{\mathcal{L}^{p}(\R)} = \|f\|_{L^{p}(\T^\nu)}. \]
The Littlewood--Paley inequality on the torus reveals that 
\[\|f\|_{L^{p}(\T^\nu)} \lesssim 
\Bigl(\sum_{N\in 2^{\N}}\|P_{N}f\|^2_{L^{p}(\T^\nu)}\Bigr)^{\frac 12}. \]
where $P_{N}f(x) = \sum_{|k|\sim N}e^{2\pi i k\cdot x}\widehat{f}(k)$. 
Applying Lemma~\ref{lemma:kronecker_weyl} to each summand 
$\|P_{N}f\|^2_{L^{p}(\T^\nu)}$, we conclude our desired inequality. 
\end{proof}

There is also the Littlewood--Paley type inequality for the frequency projections $P_{N}$: 
\begin{lemma}[Proposition 4.1 in \cite{Schippa_quasiperiodic}]
Let $p\in [2,\infty)$ and let $\vomega\in \R^{\nu}$ be non-resonant. For any trigonometric polynomial $f\in \mathcal{T}_{\vec{\omega}}(\R)$, we have
\[
  \left(\limsup_{L\to\infty}\frac{1}{2L}\int_{-L}^{L}\bigl|f(x)\bigr|^p\,dx\right)^{\frac{1}{p}}
  \lesssim
  \left(\sum_{N\in 2^{\N}}
    \left(\limsup_{L\to\infty}\frac{1}{2L}\int_{-L}^{L}\bigl|P_{N}f(x)\bigr|^p\,dx\right)^{\frac{2}{p}}
  \right)^{\frac 12}. 
\]
\label{lem:LP_Schippa}
\end{lemma}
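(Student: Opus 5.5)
Following the proof of Lemma~\ref{lem:littlewood_paley}, I reduce everything to the torus, except that the frequency projection $P_N$ is not a finite sum and so cannot be transferred directly. Write $f=\sum_{k\in\Lambda}a_k e^{2\pi i(k\cdot\vomega)x}$ with $\Lambda$ finite. Then $P_N f=\sum_{k\in\Lambda,\,N/2\le|k\cdot\vomega|<N}a_k e^{2\pi i(k\cdot\vomega)x}$ is itself a trigonometric polynomial. By Lemma~\ref{lemma:kronecker_weyl}, the left-hand side equals $\|F\|_{L^p(\T^\nu)}$ with $F(y)=\sum_{k\in\Lambda}a_k e^{2\pi i k\cdot y}$. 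Each $\|P_Nf\|_{\mathcal{L}^p}$ equals $\|\tilde P_N F\|_{L^p(\T^\nu)}$, where $\tilde P_N$ is the Fourier multiplier on $\T^\nu$ with symbol $\mathbf 1_{\{N/2\le|k\cdot\vomega|<N\}}(k)$. It therefore suffices to prove the torus inequality $\|F\|_{L^p(\T^\nu)}\lesssim(\sum_N\|\tilde P_NF\|_{L^p(\T^\nu)}^2)^{1/2}$ for $p\ge2$. One small point: the dyadic blocks with $N\in2^{\N}$ may not cover small frequencies $|k\cdot\vomega|<1$. Either $N$ is allowed to range over all dyadic numbers, or the low-frequency piece is added as one extra block; the constant is the same either way.

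The multiplier $\mathbf 1_{[N/2,N)}(|k\cdot\vomega|)$ is the restriction to $\Z^\nu$ of the function $\xi\mapsto \mathbf 1_{[N/2,N)}(|\xi\cdot\vomega|)$ on $\R^\nu$. This function depends only on the one-dimensional projection $\xi\cdot\vomega$. By the transference principle of de Leeuw (and its vector-valued, Marcinkiewicz–Zygmund form), bounds for multipliers on $\R^\nu$ transfer to their restrictions to $\Z^\nu$, provided the multiplier is regulated, i.e.\ normalized at its discontinuities. Sharp cutoffs cause problems here, so I use a smooth Littlewood–Paley partition instead. Take $\psi_N(\xi)=\psi(\xi\cdot\vomega/N)$ on $\R^\nu$, which are smooth and continuous, so de Leeuw applies cleanly. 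The square function estimate $\|(\sum_N|\psi_N(D)g|^2)^{1/2}\|_{L^p(\R^\nu)}\lesssim\|g\|_{L^p}$ on $\R^\nu$ reduces, after rotating coordinates so that $\vomega/|\vomega|$ becomes $e_1$ and applying Fubini, to the classical one-dimensional Littlewood–Paley inequality in the $e_1$ variable. Its bound is independent of $\vomega$ because the dilation by $|\vomega|$ is harmless.

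With this in hand, the standard duality argument gives the reverse inequality for $p\ge2$. Choose $\tilde\psi_N$ with $\tilde\psi_N\equiv1$ on the support of $\mathbf 1_{[N/2,N)}$ and bounded overlap. Then $F=\sum_N \tilde P_N F$, and for $G$ in the unit ball of $L^{p'}$,
\[
\langle F,G\rangle=\sum_N\langle \tilde P_NF,\tilde\psi_N(D)G\rangle\le\Bigl(\sum_N\|\tilde P_NF\|_{L^p}^2\Bigr)^{1/2}\Bigl(\sum_N\|\tilde\psi_N(D)G\|_{L^{p'}}^2\Bigr)^{1/2}.
\]
Since $p'\le2$, Minkowski's inequality gives $(\sum_N\|g_N\|_{L^{p'}}^2)^{1/2}\le\|(\sum_N|g_N|^2)^{1/2}\|_{L^{p'}}$. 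The last factor is then bounded by $\|G\|_{L^{p'}}$ by the transferred square function estimate at exponent $p'\in(1,2]$. To justify the insertion of $\tilde\psi_N$ on the lattice, note that $\tilde\psi_N(k\cdot\vomega)=1$ whenever $N/2\le|k\cdot\vomega|<N$, which is exact.

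The main obstacle is the transference step. One must check that de Leeuw's theorem applies to the vector-valued operator $g\mapsto(\tilde\psi_N(D)g)_N$ from $L^{p'}$ to $L^{p'}(\ell^2)$. This is standard, either via the Marcinkiewicz–Zygmund extension or by randomizing with Rademacher signs and applying scalar de Leeuw to each $\sum_N\pm\tilde\psi_N$. The key is that the resulting bounds are uniform in the signs and in the irrational direction $\vomega$.
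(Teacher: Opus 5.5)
Your proof is correct. The paper does not prove this lemma: it quotes it from Schippa. The nearest argument in the paper is its proof of Lemma~\ref{lem:littlewood_paley} for the height projections $R_C$, which is only Kronecker--Weyl plus the standard Littlewood--Paley inequality on $\T^\nu$.

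You correctly see why that argument does not carry over verbatim. Under Lemma~\ref{lemma:kronecker_weyl}, $P_N$ becomes the multiplier $\mathbf{1}_{[N/2,N)}(|k\cdot\vomega|)$. This is a slab transverse to an irrational direction, not a lattice annulus. Lattice points can also sit exactly on the cut (e.g.\ $k=(2^j,0)$ when $\vomega=(1,\omega)$), so transferring the sharp cutoff directly is unavailable.

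Your two added ingredients are exactly what is needed:
\begin{enumerate}
\item de Leeuw transference (vector-valued via Rademacher signs) of the smooth directional square function from $\R^\nu$, where rotation and Fubini reduce it to the one-dimensional Littlewood--Paley inequality with constants independent of $\vomega$;
\item the duality argument at exponent $p'\le 2$, with $\tilde\psi_N\equiv 1$ on the sharp shells, which avoids ever needing a square-function bound for the sharp projections.
\end{enumerate}
What your route buys over a bare citation is a self-contained proof with $\vomega$-uniform constants.

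It also makes visible that the statement as printed needs an extra block. With $N\in 2^{\N}$ only, $f\equiv 1$ gives left-hand side $1$ and right-hand side $0$.

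One small correction to your fix. Letting $N$ range over all of $2^{\Z}$ does not suffice on its own. The zero mode $k=0$, the only $k$ with $k\cdot\vomega=0$ by non-resonance, lies in no dyadic shell. So you need your second option: a separate low-frequency (or at least zero-frequency) block. Your duality argument handles this with a smooth bump $\tilde\psi_0$ equal to $1$ near the origin, and the corresponding multiplier $\tilde\psi_0(\xi\cdot\vomega)$ is continuous, so de Leeuw still applies.
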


There are the Bernstein-type inequalities for quasi-periodic
functions: 
\begin{lemma}
  Let $\nu \ge 2$, $\vec{\omega}\in\R^{\nu}$ be non-resonant, 
  $p, q \in [2,\infty)$ with $p \le q$ and $C > 1$. Then, the following inequality
  \begin{equation}
    \|R_C f\|_{\mathcal{L}^q (\R)} \lesssim 
    C^{\nu\bigl(\frac{1}{p} - \frac{1}{q}\bigr)} 
    \|R_C f\|_{\mathcal{L}^p (\R)}
  \end{equation}
  holds for any $f\in \mathcal{T}_{\vec{\omega}}(\R)$. Moreover, let $P_{I}$ be a 
  frequency projection onto an interval $I$ and $X$ denote the number of lattice 
  points in 
  \[\{k\in \Z^\nu \; ; \; |k| \leq C,\; \vomega \cdot k \in I\}. \]
  Then we have
  \begin{equation}
    \|P_{I}R_C f\|_{\mathcal{L}^q (\R)} \lesssim 
    X^{\bigl(\frac{1}{p} - \frac{1}{q}\bigr)} 
    \|P_{I}R_C f\|_{\mathcal{L}^p (\R)}
  \end{equation}
  for all $f\in \mathcal{T}_{\vec{\omega}}(\R)$. 
  \label{lemma:Bernstein}
\end{lemma}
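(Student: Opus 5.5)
By Lemma~\ref{lemma:kronecker_weyl}, both inequalities are statements about trigonometric polynomials on $\T^\nu$. Writing $g(y)=\sum_{k\in S}a_k e^{2\pi i k\cdot y}$, where $S$ is the frequency set of $R_Cf$ (respectively of $P_IR_Cf$), we have $\|R_Cf\|_{\mathcal{L}^q(\R)}=\|g\|_{L^q(\T^\nu)}$, and the same for the $\mathcal{L}^p$ norm. So it suffices to prove Bernstein inequalities on $\T^\nu$ with exponent exactly $\frac1p-\frac1q$. I will use a reproducing kernel together with Young's inequality. Suppose $K$ is a kernel on $\T^\nu$ with $\widehat K(k)=1$ for every $k\in S$. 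Then $g=g*K$. Define $r\in[1,2]$ by $1+\frac1q=\frac1p+\frac1r$, so that $1-\frac1r=\frac1p-\frac1q$. Young's inequality gives
\[
\|g\|_{L^q(\T^\nu)}\le \|K\|_{L^r(\T^\nu)}\|g\|_{L^p(\T^\nu)} .
\]
Everything therefore reduces to finding such a $K$ with $\|K\|_{L^r}\lesssim M^{1-\frac1r}$, where $M=C^\nu$ in the first part and $M=X$ in the second. Since $r\le 2$, the sharp Dirichlet kernel $\sum_{k\in S}e^{2\pi ik\cdot y}$ is not good enough: its $L^r$ norm is only bounded by $X^{1/2}$. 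The kernel has to be smoothed.

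\textbf{First inequality.} Take $K$ to be the tensor product of one-dimensional de la Vallée Poussin kernels at scale $C$. Its Fourier coefficients equal $1$ on $\{|k|<C\}$, and $K$ satisfies $\|K\|_{L^1}\lesssim 1$ and $\|K\|_{L^\infty}\lesssim C^\nu$. Hölder interpolation then gives $\|K\|_{L^r}\le \|K\|_{L^1}^{1/r}\|K\|_{L^\infty}^{1-1/r}\lesssim C^{\nu(1-\frac1r)}=C^{\nu(\frac1p-\frac1q)}$, which is the first claim.

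\textbf{Second inequality.} Here $S=\{k\in\Z^\nu:|k|\le C,\ \vomega\cdot k\in I\}$ is the set of lattice points in the convex body $B=\{|\xi|\le C,\ \vomega\cdot\xi\in I\}$, and $X=\#S$. I plan to use the geometry of numbers. Let $\Gamma$ be the lattice generated by $S$, of rank $d\le\nu$. By the theory of successive minima (or the standard result that lattice points in a convex body lie in a proper generalized arithmetic progression of comparable size), there exist a basis $v_1,\dots,v_d$ of the saturation of $\Gamma$, a point $k_0$, and lengths $N_1,\dots,N_d\ge1$ such that
\[
S\subset k_0+\Bigl\{\textstyle\sum_j n_jv_j:\ |n_j|\le N_j\Bigr\},\qquad \prod_j N_j\lesssim_\nu X .
\]
Since the sublattice is saturated, the map $y\mapsto (v_j\cdot y)_{j}$ pushes Haar measure on $\T^\nu$ forward to Haar measure on $\T^d$. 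Consequently, for any function of the form $h(v_1\cdot y,\dots,v_d\cdot y)$, its $L^s(\T^\nu)$ norm equals the $L^s(\T^d)$ norm of $h$.

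The modulation $e^{2\pi ik_0\cdot y}$ does not change any $L^s$ norm. After removing it, $g$ becomes such a function $h$ with frequencies in the box $\prod_j[-N_j,N_j]$. On $\T^d$ I then take the tensor product of de la Vallée Poussin kernels with scales $N_j$; its $L^1$ norm is $\lesssim1$ and its $L^\infty$ norm is $\lesssim\prod_j N_j\lesssim X$. The Young and interpolation argument from the first part then gives
\[
\|P_IR_Cf\|_{\mathcal{L}^q}\lesssim X^{\frac1p-\frac1q}\|P_IR_Cf\|_{\mathcal{L}^p}.
\]

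The main obstacle is this covering step. One has to pass from the lattice points of a thin, possibly degenerate, convex slab to a generalized arithmetic progression whose size is comparable to $X$ itself, rather than to the volume of the slab. The first thing I would try is to apply Minkowski's second theorem to $B-B$ inside the lattice $\Gamma$ (equivalently, to the John ellipsoid of $\operatorname{conv} S$ relative to $\Gamma$). This gives $\#(\Gamma\cap B)\gtrsim_\nu\prod_j N_j$, because the box of side lengths $\sim N_j$ along a reduced basis sits inside a bounded dilate of $B\cap\operatorname{span}\Gamma$. Lower-rank degeneracies are absorbed by working with the saturation of $\Gamma$ in place of $\Z^\nu$.
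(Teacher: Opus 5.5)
Your first inequality is fine and follows the paper's route: Kronecker--Weyl, then Bernstein on $\T^\nu$. You just make the torus Bernstein inequality explicit via the de la Vall\'ee Poussin kernel and Young's inequality.

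\textbf{The gap is the covering step in the second inequality.} The ``standard result'' you invoke says that the lattice points of a convex body lie in a proper generalized arithmetic progression of comparable size. This holds for origin-symmetric bodies (the discrete John theorem), but it is false for general convex bodies.
\begin{itemize}
\item \emph{Counterexample.} Take $T=\operatorname{conv}\{0,\,Le_1,\,e_3,\,Le_2+e_3\}\subset\R^3$. Then $S=T\cap\Z^3=\{je_1\}_{0\le j\le L}\cup\{e_3+je_2\}_{0\le j\le L}$, so $X=2L+2$.
\item \emph{No small progression covers $S$.} Since $S-S$ contains $e_1,e_2,e_3$, any progression $k_0+\{\sum_j n_jv_j:\ |n_j|\le N_j\}$ containing $S$, with independent $v_j\in\Z^3$, has $|\det(v_1,v_2,v_3)|=1$. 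The parallelepiped $k_0+\{\sum_j t_jv_j:\ |t_j|\le N_j\}$ then has volume $8N_1N_2N_3$ and contains $T$, which has volume $L^2/6$. Hence $\prod_jN_j\ge L^2/48\gg X$. Even dependent generators do not help: $|S+S|\ge(L+1)^2$ rules out any proper progression of bounded rank and size $O(X)$.
\item \emph{Where your Minkowski argument breaks.} Successive minima of $B-B$ only bound $\prod_jN_j$ by the number of lattice points of $B-B$, or of a bounded dilate of $B$. For a non-symmetric body this can vastly exceed $\#(B\cap\Z^\nu)$; here $T-T$ contains the $(L+1)^2$ points $je_1-ie_2-e_3$.
\item \emph{This is the relevant case.} The body $B=\{|\xi|\le C,\ \vomega\cdot\xi\in I\}$ is not symmetric unless $I=-I$. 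In the paper's applications $I=[n-1,n+1]$ with arbitrary $n$. You give no argument that uses the specific ball--slab geometry, so $\prod_jN_j\lesssim X$ is unproved.
\item \emph{Saturation.} Requiring a basis of the saturation is unnecessary. Linear independence of the $v_j$ already makes $y\mapsto(v_j\cdot y)_j$ a surjective homomorphism $\T^\nu\to\T^d$, hence Haar-preserving. It is also sometimes unattainable: the lattice points of the Reeve tetrahedron $\operatorname{conv}\{0,e_1,e_2,e_1+e_2+re_3\}$ fit in an $O(1)$-size progression generated by $e_1,e_2,e_1+e_2+re_3$, but any box in a basis of $\Z^3$ covering them has $\prod_jN_j\gtrsim r$.
\end{itemize}

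\textbf{What you can prove with your method.}
\begin{itemize}
\item If $I=-I$, the discrete John theorem (Tao--Vu) gives independent $v_j$ with $\{\sum n_jv_j:|n_j|<N_j\}\subseteq S\subseteq\{\sum n_jv_j:|n_j|<O_\nu(N_j)\}$. Your kernel argument then works verbatim.
\item For general $I$, translate by some $k_0\in S$. This puts $S-k_0$ inside the symmetric set $\{|k|\le 2C,\ \vomega\cdot k\in I-I\}$, so you get the inequality with $X$ replaced by the cardinality of that set.
\item The paper only uses the second inequality with $p=2$ (and $q=4$). There no structure is needed: $\|g\|_{L^\infty}\le\sum_k|a_k|\le X^{1/2}\|g\|_{L^2}$, and H\"older gives $\|g\|_{L^q}\le\|g\|_{L^\infty}^{1-2/q}\|g\|_{L^2}^{2/q}\le X^{\frac12-\frac1q}\|g\|_{L^2}$ for any $X$-element frequency set.
\item For $p>2$ and non-centered $I$, genuine structure of $S$ must enter, because for arbitrary frequency sets the bound is false. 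The Riesz products $g=\prod_{j=1}^k(1+e^{2\pi i\,10^jy})$ give $\|g\|_{16}/\|g\|_{4}=(\binom{16}{8}^{1/16}6^{-1/4})^k\approx 1.154^k$, which exceeds $X^{\frac14-\frac1{16}}=2^{3k/16}\approx1.139^k$. The paper's one-line appeal to ``Bernstein on the torus'' does not supply this structure either.
\end{itemize}
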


\begin{proof}
  Combining Lemma~\ref{lemma:kronecker_weyl} and the Bernstein inequality on the torus, 
  we have
  \begin{align*}
    \|R_C f\|_{\mathcal{L}^q (\R)} &= \|h_C\|_{L^q (\T^\nu)}\\
    &\lesssim C^{\nu \bigl(\frac{1}{p} - \frac{1}{q}\bigr)}\|h_C\|_{L^p (\T^\nu)}\\
    &= C^{\nu \bigl(\frac{1}{p} - \frac{1}{q}\bigr)}\|R_C f\|_{\mathcal{L}^p (\R)}. 
  \end{align*}
  This completes the proof of the first inequality. The left hand side of the 
  second inequality is 
  \[\Bigl(\limitsup{L}{\infty}\frac{1}{2L}\int^{L}_{-L}
  \Bigl|\sum_{\substack{k\in \Z^\nu, |k|\leq C 
  \\\vec{\omega}\cdot k \in I}}\widehat{f}(\vec{\omega}\cdot k)
  e^{-2\pi i (\vec{\omega}\cdot k)x}\Bigr|^{q}dx\Bigr)^{\frac 1q}\]
  From Lemma~\ref{lemma:kronecker_weyl} and 
  the Bernstein on torus, it holds that 
  \[\|P_{I}R_C f\|_{\mathcal{L}^q (\R)}
  \lesssim
  X^{\bigl(\frac{1}{p} - \frac{1}{q}\bigr)} \|P_{I}R_C f\|_{\mathcal{L}^p (\R)}. \]
\end{proof}

\begin{lemma}
  Let $\nu \geq 2$, $\vec{\omega}\in\R^\nu$ be non-resonant
  and $\eta\in\mathcal{S}(\R)$ be a Schwartz function that satisfies
  \begin{align*}
    \begin{cases}
      \supp \mathcal{F}_{\R}[\eta]\subset [-1,1],\\
      \eta \gtrsim 1\; \mathrm{on\;}[-1,1]. 
    \end{cases}
  \end{align*}
  Then for all 
   trigonometric polynomials $f\in\mathcal{T}_{\vec{\omega}}(\R)$,
  we have
  \[\limitsup{L}{\infty}\frac{1}{2L}\int_{\R}|f(x)|^4 \eta(x / L)dx
  \sim \limitsup{L}{\infty}\frac{1}{2L}\int^{L}_{-L}|f(x)|^{4}dx\]
  \label{lemma:weighted_lp}
\end{lemma}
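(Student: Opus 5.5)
The plan is to compute the weighted average directly and compare it with the sharp-cutoff average via Fourier expansion. First expand $|f|^4$. Since $f\in\mathcal{T}_{\vec{\omega}}(\R)$ is a finite sum, $|f(x)|^4$ is again a finite trigonometric sum
\[
|f(x)|^4=\sum_{\lambda\in\Lambda'} c_\lambda e^{2\pi i\lambda x},
\]
where $\Lambda'$ is a finite set of frequencies of the form $k\cdot\vec\omega$ with $k$ ranging over a finite subset of $\Z^\nu$. The coefficient $c_0$ is exactly the mean. By non-resonance and the computation in the proof of Lemma~\ref{lemma:kronecker_weyl}, the right-hand side equals $c_0=\lim_{L\to\infty}\frac{1}{2L}\int_{-L}^{L}|f|^4dx$, and the limit exists.

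For the left-hand side I integrate term by term:
\[
\frac{1}{2L}\int_{\R}e^{2\pi i\lambda x}\eta(x/L)\,dx=\tfrac12\,\mathcal{F}_{\R}[\eta](-L\lambda).
\]
Since $\supp\mathcal{F}_{\R}[\eta]\subset[-1,1]$, this vanishes as soon as $L|\lambda|>1$. Because $\Lambda'$ is finite, $\min_{\lambda\in\Lambda'\setminus\{0\}}|\lambda|=\delta>0$, so for every $L>1/\delta$ only the $\lambda=0$ term survives. The left-hand quantity is therefore eventually equal to $\tfrac12\mathcal{F}_{\R}[\eta](0)\,c_0=\tfrac12\bigl(\int\eta\bigr)\,c_0$. (I assume $\eta$ is integrable, which holds since it is Schwartz.)

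It remains to see that $\int\eta$ is a positive finite constant, so that $\sim$ holds with constants depending only on $\eta$. Finiteness is clear. For positivity, the hypothesis $\eta\gtrsim1$ on $[-1,1]$ alone does not force $\int\eta>0$ unless $\eta\ge0$. So I would either read the lemma with $\eta\ge0$, or, more robustly, use $|f|^4\ge0$ together with $\eta\gtrsim1$ on $[-1,1]$ for a nonnegative choice of $\eta$. Such a choice exists: take $\eta=|\check\psi|^2$ with $\psi$ supported in $[-1/2,1/2]$, suitably rescaled. Under $\eta\ge0$, the lower bound $\int\eta\ge\int_{-1}^1\eta\gtrsim1$ is immediate, and the upper bound is finite by integrability.

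The main obstacle is exactly this positivity issue, i.e.\ fixing what $\eta$ is implicitly assumed to satisfy. I expect the intended $\eta$ is nonnegative, and I would state that explicitly. The rest is a direct computation, made exact (not just asymptotic) by the compact Fourier support of $\eta$ and the discreteness of the finite frequency set.
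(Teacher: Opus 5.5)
Your proof is correct and follows the paper's argument. Both expand $|f|^4$ into a finite trigonometric sum, integrate each term against $\eta(x/L)$ to get $\tfrac12\mathcal{F}_{\R}[\eta](-L\lambda)$, discard the nonzero frequencies, and identify what survives as $\tfrac12\mathcal{F}_{\R}[\eta](0)\,\|f\|^4_{\mathcal{L}^4(\R)}$; you remove the nonzero frequencies exactly for $L>1/\delta$ using the compact Fourier support, while the paper takes a limit. Your caveat is also correct: the paper stops at this identity and silently uses $\mathcal{F}_{\R}[\eta](0)=\int\eta>0$, which the stated hypotheses do not guarantee and which an extra assumption such as $\eta\ge 0$ does supply.
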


\begin{proof}
  Let $\Lambda\subset\Z^{\nu}$ be a finite subset and let $f(x) = \sum_{k\in\Lambda}e^{2\pi i(k\cdot\vomega)x}a_{k}$. Noting that 
  \[
  |f(x)|^4 = \Bigl|\sum_{k\in\Lambda}e^{2\pi i(k\cdot\vomega)}a_{k}\Bigr|^{4} = \sum_{k_1,k_2, k_3, k_4}a_{k_1}\overline{a_{k_2}}a_{k_3}\overline{a_{k_4}}e^{2\pi i (k_1 - k_2 + k_3 - k_4)\cdot\vomega}, 
  \]
  the integral becomes
  \begin{align*}
    \limsup_{L\to\infty}\frac{1}{2L}\int_{\R}|f(x)|^4 \eta(x/L)dx
    &= \limsup_{L\to\infty}\frac{1}{2L}\int_{\R}\sum_{k_1,k_2, k_3, k_4}a_{k_1}\overline{a_{k_2}}a_{k_3}\overline{a_{k_4}}e^{2\pi i (k_1 - k_2 + k_3 - k_4)\cdot\vomega}\eta(x/L)dx\\
    &=\sum_{k_1 + k_3 = k_2 + k_4}\limsup_{L\to\infty}\frac{1}{2L}\int_{\R}a_{k_1}\overline{a_{k_2}}a_{k_3}\overline{a_{k_4}}\eta(x/L)dx\\
    &+\sum_{k_1 + k_3 \neq k_2 + k_4}\limsup_{L\to\infty}\frac{1}{2L}\int_{\R}a_{k_1}\overline{a_{k_2}}a_{k_3}\overline{a_{k_4}}e^{2\pi i (k_1 - k_2 + k_3 - k_4)\cdot\vomega}\eta(x/L)dx\\
    &= \frac{1}{2}\widehat{\eta}(0)\sum_{k_1 + k_3 = k_2 + k_4}a_{k_1}\overline{a_{k_2}}a_{k_3}\overline{a_{k_4}}\\
    &\quad+ \frac{1}{2}\sum_{k_1 + k_3 \neq k_2 + k_4}a_{k_1}\overline{a_{k_2}}a_{k_3}\overline{a_{k_4}}\limsup_{L\to\infty}\widehat{\eta}(L(k_1 - k_2 + k_3 - k_4)\cdot\vomega). 
  \end{align*}
  Since $\eta$ is smooth, $\limsup_{L\to\infty}\widehat{\eta}(L(k_1 - k_2 + k_3 - k_4)\cdot\vomega) = 0$. Thus, 
  \[
  \limsup_{L\to\infty}\frac{1}{2L}\int_{\R}|f(x)|^4 \eta(x/L)dx
  = \frac{1}{2}\widehat{\eta}(0)\sum_{k_1 + k_3 = k_2 + k_4}a_{k_1}\overline{a_{k_2}}a_{k_3}\overline{a_{k_4}} = \frac{1}{2}\widehat{\eta}(0)\|f\|^{4}_{\mathcal{L}^4 (\R)}. 
  \]
  This completes the proof. 
\end{proof}

\section{Examples}
In this section, we present some examples illustrating the necessary conditions for our Strichartz estimates. We consider the lower bound for 
the quantity $\|e^{2\pi itD^{2}_{x}} R_{C}f
  \|_{L^p_{t}\mathcal{L}^p_{x}([0,T]\times \R)} / \|R_{C}f\|_{\mathcal{L}^2_{x}(\R)}$
  for each example. Throughout this section, $\nu \geq 2$ and $\vomega\in\R^\nu$
  is non-resonant.

\begin{enumerate}[label = (\alph*)]
  \item Let $f(x) = e^{2\pi i(k\cdot\vomega)x}$ with $|k|\sim C$. Then, we immediately find that 
  \[\|e^{2\pi itD^{2}_{x}} R_{C}f
  \|_{L^p_{t}\mathcal{L}^p_{x}([0,T]\times \R)}
  / \|R_{C}f\|_{\mathcal{L}^2_{x}(\R)}
  \gtrsim T^{\frac 1p}. \]
  From this example, we notice that our estimate \eqref{eq:main_thm_degree2}
  with $p = 6$ and $T\sim C^2$ is sharp up to some subpolynomial factor. Also 
  the estimate \eqref{eq:main_thm_degree3} with $p = 6$ and $T \sim C^{4}$
  is sharp. 

  \item Let $f(x) = \sum_{|k|\leq C}e^{2\pi i (k\cdot\omega)x}$. Lemma~\ref{lemma:kronecker_weyl}
  reveals that 
  \[
    \int^{T}_{0}\limsup_{L\to\infty}\frac{1}{2L}
    \int^{L}_{-L}\Bigl|\sum_{|k|\leq C}
    e^{2\pi i[(k\cdot\vomega)x + (k\cdot\vomega)^2 t] }\Bigr|^p dxdt
    = \int^{T}_{0}\int_{\T^\nu}\Bigl|\sum_{|k|\leq C}
    e^{2\pi i[k\cdot x + (k\cdot \vomega)^2 t]}\Bigr|^p dxdt
  \]
  and
  \[\limsup_{L\to\infty}\frac{1}{2L}\int^{L}_{-L}
  \Bigl|\sum_{|k|\leq C}e^{2\pi i(k\cdot\vomega)x}\Bigr|^2dx 
  = \int_{\T^\nu}\Bigl|\sum_{|k|\leq C}e^{2\pi ik\cdot y}\Bigr|^2dy. \]
  Therefore, taking $T \leq C^{-2}$, our quantity satisfies 
  \[\|e^{2\pi itD^{2}_{x}} R_{C}f
  \|_{L^p_{t}\mathcal{L}^p_{x}([0,T]\times \R)}
  / \|R_{C}f\|_{\mathcal{L}^2_{x}(\R)}
  \gtrsim C^{\frac{\nu}{2} - \frac{\nu + 2}{p}}. \]
  This example shows that Schippa's result (Theorem~\ref{theorem:schippa_decoupling})
  with $d = 1$ and $p\geq 6$ is sharp up to some subpolynomial factor. 

  \item This example was inspired by \cite{Degn_Germain_Guth_Myerson}. We present this example when $\nu = 2$ and $\omega\in\R\backslash\Q$ is not of degree $2$. This example makes our heuristics rigorous when $p\geq 6$ is an even integer. The key ingredient here is a result for the Parsell--Vinogradov systems. For integers $s\geq 1$, and $N\geq 1$, we consider the following system: 
  \begin{align*}
    X_{1} + \cdots + X_{s} &= X_{s + 1} + \cdots + X_{2s},\\
    Y_{1} + \cdots + Y_{s} &= Y_{s + 1} + \cdots + Y_{2s},\\
    X^2_{1} + \cdots + X^2_{s} &= X^2_{s + 1} + \cdots + X^2_{2s},\\
    X_{1}Y_{1} + \cdots + X_{s}Y_{s} &= X_{s + 1}Y_{s + 1} + \cdots +X_{2s}Y_{2s}\\
    Y^2_{1} + \cdots + Y^2_{s} &= Y^2_{s + 1} + \cdots + Y^2_{2s},
  \end{align*}
  with $-N \leq X_{i}, Y_{i}\leq N$. Let $J_{s}(N)$ denote the number of integer solutions to this system. In \cite{Persell_Prendiville_Wooley}, the following lower bound for $J_{s}(N)$ was established: 
  \begin{theorem}[Theorem 1.2. in \cite{Persell_Prendiville_Wooley}]
    Suppose $s\geq 1$ is a natural number and $N \geq 1$. Then one has
    \begin{equation}
      J_{s}(N) \gtrsim N^{2s} + N^{2s - 5} + N^{4s - 8}. 
      \label{eq:PV_lower}
    \end{equation}
  \end{theorem}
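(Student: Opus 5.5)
The statement is the lower bound $J_s(N)\gtrsim N^{2s}+N^{2s-5}+N^{4s-8}$ for the Parsell--Vinogradov system. The plan is to prove each of the two nontrivial terms by a separate elementary argument. The middle term $N^{2s-5}$ is dominated by $N^{2s}$, so it needs nothing extra.

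\emph{Diagonal term.} Take the diagonal solutions $(X_{s+i},Y_{s+i})=(X_i,Y_i)$ for $i=1,\dots,s$. The pairs $(X_i,Y_i)\in[-N,N]^2$ are free, which gives $(2N+1)^{2s}\gtrsim N^{2s}$ solutions.

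\emph{Term $N^{4s-8}$: setup.} For this term I use the standard Cauchy--Schwarz (pigeonhole) argument on the values of the five forms. Consider the map
\[
\Phi:(X_i,Y_i)_{i=1}^{s}\mapsto\Bigl(\sum X_i,\ \sum Y_i,\ \sum X_i^2,\ \sum X_iY_i,\ \sum Y_i^2\Bigr),
\]
defined on $([-N,N]^2)^s$, a domain of cardinality $\sim N^{2s}$. By definition $J_s(N)=\sum_{v} r(v)^2$, where $r(v)=\#\Phi^{-1}(v)$.

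\emph{Counting the image.} Each coordinate of $\Phi$ takes a bounded number of integer values:
\begin{itemize}
\item the two linear forms each take $O(sN)$ values;
\item the three quadratic forms each take $O(sN^2)$ values.
\end{itemize}
Hence the image has size $\lesssim_s N^{1+1+2+2+2}=N^8$.

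\emph{Cauchy--Schwarz.} Applying Cauchy--Schwarz over the image,
\[
N^{4s}\lesssim\Bigl(\sum_v r(v)\Bigr)^2\le \#\mathrm{Im}(\Phi)\sum_v r(v)^2\lesssim N^{8}J_s(N),
\]
so $J_s(N)\gtrsim N^{4s-8}$.

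\emph{Conclusion and dependence on $s$.} Adding the two bounds and noting $N^{2s-5}\le N^{2s}$ gives \eqref{eq:PV_lower}. The implicit constants depend on $s$, which is allowed since $s$ is fixed.

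There is no real obstacle here; the only point needing care is counting the ranges of the quadratic forms correctly. The sums $\sum X_i^2$ and $\sum Y_i^2$ lie in $[0,sN^2]$, and $\sum X_iY_i$ lies in $[-sN^2,sN^2]$. This is what yields the total weight $8$, and $8$ is precisely the sum of the degrees of the system, $1+1+2+2+2$.
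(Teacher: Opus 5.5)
Your proof is correct and complete. The diagonal choice $(X_{s+i},Y_{s+i})=(X_i,Y_i)$ already gives $(2N+1)^{2s}$ solutions. For the other term you use $J_s(N)=\sum_v r(v)^2$ with $\sum_v r(v)=(2N+1)^{2s}$. The five forms take at most $\lesssim_s N^{1+1+2+2+2}=N^{8}$ joint values, so Cauchy--Schwarz yields $J_s(N)\gtrsim_s N^{4s-8}$. You are also right that the term $N^{2s-5}$ is dominated by $N^{2s}$ and needs no separate argument.

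The paper gives no proof of this bound; it simply imports it from Parsell--Prendiville--Wooley. Your route therefore differs only in being self-contained. It is the standard elementary derivation of the lower bound $N^{2s}+N^{4s-K}$, where $K=8$ is the total degree of the system. This is exactly what Example (c) uses. There $s=p/2$ is fixed, so the $s$-dependence of your implicit constant is harmless.
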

  Now, we are ready to construct our example. Let $\omega\in\R\backslash\Q$ be not of degree $2$ and let $f(x) = \sum_{|k|\leq C}e^{2\pi i(k\cdot\vomega)x}$. Let $\chi$ be a smooth function supported on $[-1,1]$ with non-negative Fourier transform (such a function can be obtained by the convolution $\varphi * \tilde{\varphi}$ where $\varphi\in C^{\infty}_{0}([-1/2,1/2])$ and $\tilde{\varphi}(x):= \varphi(-x)$). Then for even $p\geq 6$, multiplying out the $L^{p}_{t}\mathcal{L}^{p}_{x}$-norm, it follows that 
  \begin{align*}
    &\quad\Bigl\|\sum_{|k|\leq C}e^{2\pi i [(k\cdot \vomega)x + (k\cdot\vomega)^2 t]}\Bigr\|^{p}_{L^{p}_{t}\mathcal{L}^{p}_{x}([0,T]\times \R)}\\
    &= \frac{1}{2}\Bigl\|\sum_{|k|\leq C}e^{2\pi i [(k\cdot \vomega)x + (k\cdot\vomega)^2 t]}\Bigr\|^{p}_{L^{p}_{t}\mathcal{L}^{p}_{x}([-T,T]\times \R)}\\
    &\gtrsim \int_{\R}\limsup_{L\to\infty}\frac{1}{2L}\int^{L}_{-L}\Bigl|\sum_{|k|\leq C}e^{2\pi i [(k\cdot \vomega)x + (k\cdot\vomega)^2 t]}\Bigr|^{p}dx\chi\Bigl(\frac{t}{T}\Bigr)dt\\
    &=\sum_{k_{1} + \cdots + k_{p/2} = k_{p/2 + 1}+\cdots + k_{p}}
    \int_{\R}e^{2\pi i [\{(k_1\cdot\vomega)^2 + \cdots +(k_{p/2}\cdot\vomega)^2\} - \{(k_{p/2 + 1}\cdot\vomega)^2 + \cdots +(k_{p}\cdot\vomega)^2\}]t}\chi\Bigl(\frac{t}{T}\Bigr)dt. 
  \end{align*}
  Hereafter, we abbreviate the condition $k_{1} + \cdots + k_{p/2} = k_{p/2 + 1}+\cdots + k_{p}$ to $(*)$ and $\{(k_1\cdot\vomega)^2 + \cdots +(k_{p/2}\cdot\vomega)^2\} - \{(k_{p/2 + 1}\cdot\vomega)^2 + \cdots +(k_{p}\cdot\vomega)^2\}$ to $\Omega$. Then 
  \begin{align*}
    \sum_{(*)}\int_{\R}e^{2\pi i\Omega t}\chi\Bigl(\frac{t}{T}\Bigr)dt
    &= \sum_{(*)} T\widehat{\chi}(-T\Omega)\\
    &= \sum_{(*),\Omega = 0} T\widehat{\chi}(0)+\sum_{(*),\Omega \neq 0} T\widehat{\chi}(-T\Omega). 
  \end{align*}
  Note that the condition $(*)$ with $\Omega = 0$ forms a Parsell--Vinogradov system of $(k_{1},\cdots , k_{p})$ because $1$, $\omega$, and $\omega^2$ are linearly independent over $\Q$. 
  Since $\widehat{\chi}$ is non-negative, applying \eqref{eq:PV_lower} with $s = p/2$, we obtain
  \[
  \Bigl\|\sum_{|k|\leq C}e^{2\pi i [(k\cdot \vomega)x + (k\cdot\vomega)^2 t]}\Bigr\|^{p}_{L^{p}_{t}\mathcal{L}^{p}_{x}([0,T]\times \R)} / \|f\|_{\mathcal{L}^{2}_{x}}
  \gtrsim
  \Bigl(\sum_{(*),\Omega = 0} T\Bigr)^{\frac 1p} C^{-1}
  \gtrsim
  C^{1 - \frac 8p}T^{\frac 1p}. 
  \]
  Note that this example becomes sharper as $T$ gets larger because the second term $\sum_{(*),\Omega \neq 0} T\widehat{\chi}(-T\Omega)$ rapidly decays as $T\to\infty$. Also note that the exponents of $C$ and $T$ coincide with the second term of Observation 2.
\end{enumerate}
\section{Proof of Theorem \ref{theorem:long_time_Strchartz}} 
We first prepare a counting lemma that will be repeatedly used in this and the next section. 
\begin{lemma}
    Let $\varepsilon > 0$, $C > 0$, and $\omega\in \R$ be an irrational real number. 
    For fixed $M, S \in \R$, consider 
    \begin{align}
        \begin{cases}
            \lambda_1 + \lambda_2 + \lambda_3 = M\\
            \lambda^2_1 + \lambda^2_2 + \lambda^2_3 = S
        \end{cases}
        \label{eq:additive_energy6}
    \end{align}
    where $\lambda_1, \lambda_2, \lambda_3 \in \Z + \omega \Z$. Then, we have the following bound
    \[\#\{(k_1, k_2, n_1,n_2, m_1,m_2)\in \Z^6\cap [-C, C]^{6} \;;\; \lambda_1 = k_1 + k_2\omega, \lambda_2 = n_1 + n_2 \omega, \lambda_3 = m_1 + m_2\omega \;\mathrm{with}\; \eqref{eq:additive_energy6} \} \lesssim_{\varepsilon} C^{\varepsilon}. \]\label{lemma:additive_energy}
\end{lemma}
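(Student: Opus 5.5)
The plan is to eliminate the linear constraint in \eqref{eq:additive_energy6}, reduce the quadratic constraint to a binary form with fixed value, and then count lattice points on the resulting conic(s), splitting according to whether $1,\omega,\omega^2$ are linearly independent over $\Q$.

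\textbf{Step 1 (reduction to a binary form).} Put $\mu_i:=3\lambda_i-M$. Then \eqref{eq:additive_energy6} becomes $\mu_1+\mu_2+\mu_3=0$ and $\sum\mu_i^2=9S-3M^2$. Substituting $\mu_3=-\mu_1-\mu_2$ gives
\[
\mu_1^2+\mu_1\mu_2+\mu_2^2=R,\qquad R:=\tfrac{9S-3M^2}{2}.
\]
So it suffices to count pairs $(\mu_1,\mu_2)$. Here $\mu_j=a_j+b_j\omega$ with $(a_j,b_j)$ in a fixed translate of $3\Z^2$ and $|a_j|,|b_j|\lesssim C$; this uses the independence of $1,\omega$ to identify $\mu_j$ with $(a_j,b_j)$. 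Given $(\mu_1,\mu_2)$, the triple $(\lambda_1,\lambda_2,\lambda_3)$, and hence the six integers, is determined.

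\textbf{Step 2 (the generic case: $1,\omega,\omega^2$ independent over $\Q$).} Expand
\[
\mu_1^2+\mu_1\mu_2+\mu_2^2=q(a)+\omega\,B(a,b)+\omega^2 q(b),\qquad q(x)=x_1^2+x_1x_2+x_2^2 .
\]
Write $R=r_0+r_1\omega+r_2\omega^2$ if $R$ lies in the $\Q$-span of $1,\omega,\omega^2$; otherwise there are no solutions. Independence forces $q(a)=r_0$ and $q(b)=r_2$. Each is a lattice point on a fixed ellipse $r\phi(\mathbb S^1)$ with $\phi$ analytic and radius $\lesssim C$. By Theorem~\ref{thm:BP_counting}, or directly by the divisor bound in $\Z[e^{2\pi i/3}]$, each has $\lesssim_\varepsilon C^{\varepsilon}$ solutions. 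The product gives $C^{2\varepsilon}$ choices of $(a,b)$.

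\textbf{Step 3 (the main obstacle: $\omega$ quadratic).} When $\omega$ is quadratic, only two rational equations constrain four integers, so the splitting of Step 2 fails and I will argue arithmetically instead. Let $K=\Q(\omega)$ (real quadratic) and $L=K(\zeta)$ with $\zeta=e^{2\pi i/3}$, a CM quartic field. After clearing the fixed denominator of $\omega$, set $\alpha:=\mu_1-\zeta\mu_2\in\mathcal O_L$ up to a bounded factor. The equation reads $N_{L/K}(\alpha)=\alpha\bar\alpha=R$, and the pair $(\mu_1,\mu_2)$ is recovered from $\alpha$.

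\emph{Counting ideals.} The ideal $(\alpha)$ divides $(R)$ in $\mathcal O_L$. Moreover $|N_{K/\Q}(R)|\lesssim C^{O(1)}$, because $R$ and its conjugate are polynomial in the coordinates, which are $O(C)$. Hence the number of such ideals is at most $d(N_{K/\Q}R)^{O(1)}\lesssim_\varepsilon C^{\varepsilon}$ by the divisor bound.

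\emph{Counting generators of a fixed ideal.} Two generators differ by a unit $u\in\mathcal O_L^\times$ with $u\bar u=1$. Since $L$ is CM, its unit rank equals that of $K$, so $[\mathcal O_L^\times:\mu_L\mathcal O_K^\times]\le 2$. For $u=\zeta'\eta$ with $\zeta'\in\mu_L$ and $\eta\in\mathcal O_K^\times$, we get $u\bar u=\eta^2$, so $u\bar u=1$ forces $\eta=\pm1$. Thus there are $O(1)$ such units and $O(1)$ generators per ideal.

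Combining the two counts gives the bound $C^{\varepsilon}$ in this case too.

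For $\omega$ transcendental, Step 2 applies verbatim. If one prefers to avoid the number field argument of Step 3, an alternative is to apply Theorem~\ref{thm:BP_counting} after parametrizing the quadric, but I expect the CM-unit argument to be the cleanest route.
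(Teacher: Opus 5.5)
Your proposal is correct. Steps 1--2 coincide with the paper's reduction to $\mu_1^2+\mu_1\mu_2+\mu_2^2=(9S-3M^2)/2$ and with its Case 1, which applies the Eisenstein-integer divisor bound to $q(a)=r_0$ and $q(b)=r_2$.

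For quadratic $\omega$ you take a genuinely different route.

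\textbf{The paper's route.} It stays elementary. It splits according to whether $b=0$ or $b\neq0$ in $a\omega^2+b\omega+c=0$. It then uses the identity $4Q(K)Q(N)-B(K,N)^2=3\Delta(K,N)^2$, where $B$ is the polarization of $Q$ and $\Delta=K_1N_2-K_2N_1$, together with the two rational equations, to place $(Q(N),\Delta(K,N))$ on a fixed ellipse. It counts those points with Theorem~\ref{thm:BP_counting}. Finally it recovers $N$ from $Q(N)$ by the Eisenstein divisor bound, and $K$ from the linear system given by $B(K,N)$ and $\Delta(K,N)$, treating $N=0$ separately.

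\textbf{Your route.} You read the equation as a relative norm equation $N_{L/K}(\alpha)=R$ in the CM field $L=\Q(\omega,\zeta_3)$. You then count principal ideal divisors of $(R)$ and multiply by the number of units of relative norm one.

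\textbf{Comparison.} Your version is uniform: there is no $b=0$ versus $b\neq0$ split, no degenerate $N=0$ case, and no Bombieri--Pila. It also explains structurally why the count is only $C^{\varepsilon}$. The price is algebraic number theory (ideal factorization, the unit theorem, the CM unit index), whereas the paper needs only explicit identities and a black-box lattice-point bound.

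\textbf{Two small points to tidy.}
\begin{enumerate}
\item Dispose of $R=0$ separately. It forces $\mu_1=\mu_2=0$, and your ideal-divisor count needs $R\neq0$.
\item In the unit step you only treated units in $\mu_L\mathcal{O}_K^{\times}$. Either note that two norm-one units in the other coset differ by a norm-one element of $\mu_L\mathcal{O}_K^{\times}$, or, more simply, observe that $u\bar u=1$ makes every conjugate of $u$ have modulus $1$, so $u$ is a root of unity by Kronecker's theorem.
\end{enumerate}
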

\begin{proof}
    Since $(k_1, k_2, n_1,n_2, m_1,m_2)\in \Z^6\cap [-C, C]^{6}$, 
    we may assume $M \lesssim C$ and $S\lesssim C^2$. Furthermore, we may
    assume $M\in \Z + \omega \Z$ and $S\in \Z + \omega \Z + \omega^2\Z$ otherwise
    there is no $(\lambda_1, \lambda_2, \lambda_3)\in \Z + \omega\Z$ 
    satisfying \eqref{eq:additive_energy6}. 
    
    \smallskip
    \noindent\textbf{Case 1.}
    We first consider the case where the degree of the number $\omega$
    is greater than $2$. From the first equation in \eqref{eq:additive_energy6}, 
    we eliminate $\lambda_3$ from the second equation and get
    \begin{equation}
        (3\lambda_1 - M)^2 + (3\lambda_2 - M)^2 + (3\lambda_1 - M)(3\lambda_2 - M) = \frac{9S - 3M^2}{2}.
        \label{eq:reduced_additive_energy}
    \end{equation}
    Set $L := 9S - 3M^2$, $K_{1}+ N_1 \omega := 3\lambda_1 - M$, and $K_2 + N_2 \omega := 3\lambda_2 - M$ ($K_1,K_2,N_1,N_2\in \Z$). 
    If $K_1,K_2,N_1,N_2$ are determined, then $k_1, k_2, n_1, n_2$ are also determined once $M$ is fixed. 
    Therefore, it suffices to count the number of integers $K_1,K_2,N_1, N_2$ with \eqref{eq:reduced_additive_energy}. Observe that 
    \begin{align*}
        (K_1 + N_1\omega)^2 &= K^2_1 + 2K_1N_1\omega + N^2_1\omega^2\\
        (K_2 + N_2\omega)^2 &= K^2_2 + 2K_2N_2\omega + N^2_2\omega^2\\
        (K_1 + N_1\omega)(K_2 + N_2\omega) &= K_1K_2 + (K_1N_2 + K_2N_1)\omega + N_1N_2\omega^2
    \end{align*}
    and $1, \omega, \omega^2$ are linearly independent over $\Q$ 
    because the degree of $\omega$ is greater than $2$ now. Hence, we are led 
    to consider the following system:
    \begin{align}
        \begin{cases}
            2(K^2_1 + K^2_2 + K_1K_2) &= L_1\\
            2(2K_1N_1 + 2K_2N_2 + K_1N_2 + K_2N_1) &= L_2\\
            2(N^2_1 + N^2_2 + N_1N_2) &= L_3
        \end{cases}
        \label{eq:degree3_counting}
    \end{align}
    where $L_1, L_2, L_3\in \Z$. 
    The left hand side of the first equation in \eqref{eq:degree3_counting}
    can be decomposed in the ring of Eisenstein integers $\Z + \frac{1 + \sqrt{-3}}{2}\Z$ as
    \[2\Bigl(K_1 + \frac{1 - \sqrt{-3}}{2}K_2\Bigr)\Bigl(K_1 + \frac{1 + \sqrt{-3}}{2}K_2\Bigr). \]
    Note that the ring of Eisenstein integers is a unique factorization domain. Then, it follows that 
    the number of divisors of $L_1$ in this ring is $\lesssim_{\varepsilon} L^{\varepsilon}_1$ and hence the number of 
    pairs $(K_1, K_2)$ satisfying the first equation in \eqref{eq:degree3_counting} is also $\lesssim_{\varepsilon} L^{\varepsilon}_1$. 
    We apply the same argument to the third equation in \eqref{eq:degree3_counting} and obtain that 
    the number of possible pairs $(N_1, N_2)$ is $\lesssim_{\varepsilon}L^{\varepsilon}_{3}$. Thus, the number of
    possible $(K_1, K_2, N_1, N_2)$ is $\lesssim_{\varepsilon} L^{\varepsilon}_1L^{\varepsilon}_3 \lesssim C^{2\varepsilon}$.
    Replacing $\varepsilon$ by $\varepsilon / 2$ yields our desired bound\footnote{This bound also can be proved by using Theorem~\ref{thm:BP_counting}. }. 

    \smallskip
    \noindent\textbf{Case 2.}
    Next, we consider when the degree of $\omega$ is $2$. The argument is the
    same as in the Case 1 up to \eqref{eq:degree3_counting}. However, 
    in this case, $1, \omega, \omega^2$ are not linearly independent over $\Q$. 
    So, assume that $\omega$ is a solution to a quadratic equation
    \[a x^2 + bx + c = 0\]
    where $a,b,c\in\Z$, $a>  0$, and $b^2 - 4ac > 0$ (we can assume this discriminant condition
    because $\omega$ is a real number). We divide the argument into two cases according to whether $b =0 $. 
    
    \smallskip
    \noindent\textbf{Case 2-1 ($\mathbf{b = 0}$).} In this case, $\omega^2 = -\frac{c}{a}$. 
    Hence, instead of \eqref{eq:degree3_counting}, we consider the following system: 
    \begin{align}
        \begin{cases}
            2\{a(K^2_1 + K_1K_2 + K^2_2) - c(N^2_1 + N_1N_2 + N^2_2)\} = L_1\\
            2\{2K_1N_1 + 2K_2N_2 + K_1N_2 + K_2N_1\} = L_2
        \end{cases}
        \label{eq:degree2_counting_b0}
    \end{align}
    where $L_1, L_2\in \Z$. Set $K = (K_1,K_2)$, $N = (N_1, N_2)$, and $Q(X) := X^2_1 + X_1X_2 + X^2_2$
    for $X = (X_1,X_2)\in \R^2$. From a simple computation, it follows that
    \[4Q(K)Q(N) - \frac{L^2_2}{4} = 3 (K_1N_2 - K_2N_1)^2. \]
    We write $\Delta(K,N) = K_1N_2 - K_2N_1$. Since $aQ(K) = cQ(N) + \frac{L_1}{2}$ , this 
    equation becomes
    \[a\Bigl(4cQ(N) + L_1 \Bigr)^2 - 12ac (\Delta(K,N))^2 =L^2_1 + ac L^2_2. \]
    Since $-ac > 0$, $X^2 -12acY^2 = R$ is a real analytic image of $\mathbb{S}^1$. Applying Theorem~\ref{thm:BP_counting}, we obtain 
    that the number of possible values of $(Q(N), \Delta(K,N))$ is 
    $\lesssim_{\varepsilon} C^{\varepsilon}$
    because $L_1, L_2$ are fixed and also $|L_1|, |L_2|\lesssim C^{2}$. 
    For each $Q(N) = P$, there are $\lesssim_{\varepsilon}P^{\varepsilon}$ choices of $(N_1,N_2)$ as explained in the Case 1. If $(N_1, N_2) \neq (0,0)$ are given, then combining the second equation of \eqref{eq:degree2_counting_b0} with $\Delta(K,N) = P'$, 
    $(K_1, K_2)$ are determined. If $N_1 = N_2 = 0$, we can bound the number of pairs $(K_1,K_2)$ by $\lesssim_{\varepsilon}C^{\varepsilon}$ from the first equation of \eqref{eq:degree2_counting_b0}. This shows that the number of the
    possible choices of $(K_1,K_2, N_1, N_2)$ is $\lesssim_{\varepsilon}C^{\varepsilon}$ and this gives our desired bound. 

    \smallskip
    \noindent\textbf{Case 2-2 ($\mathbf{b \neq 0}$).} In this case, $\omega^2$ can be written as
    \[\omega^2 = -\frac ba \omega - \frac ca.\]
    Observe that 
    \begin{align*}
        (K_1 + N_1\omega)^2 &= \Bigl(K^2_1 - \frac{c}{a}N^2_1\Bigr) + \Bigl(2K_1N_1 - \frac{b}{a}N^2_1\Bigr)\omega,\\
        (K_2 + N_2\omega)^2 &= \Bigl(K^2_2 - \frac{c}{a}N^2_2\Bigr) + \Bigl(2K_2N_2 - \frac{b}{a}N^2_2\Bigr)\omega,\\
        (K_1 + N_1\omega)(K_2 + N_2\omega) &= \Bigl(K_1K_2 - \frac{c}{a}N_1N_2\Bigr)+ \Bigl(K_1N_2 + K_2N_1 - \frac{b}{a}N_1N_2\Bigr)\omega. 
    \end{align*}
    Then, we are led to consider the following system: 
    \begin{align}
        \begin{cases}
            2\{a(K^2_1 + K_1K_2 + K^2_2) - c(N^2_1 + N_1N_2 + N^2_2)\} = L_1\\
            2\{a(2K_1 N_1 + 2K_2 N_2 + K_1 N_2 + K_2 N_1) - b(N^2_1 + N_1N_2 + N^2_2)\} = L_2
        \end{cases}
    \end{align}
    for fixed $L_1, L_2\in \Z$. Similar to the Case 2-1., computing $4Q(K)Q(N) - \frac{1}{4a^2}(L_2 + 2bQ(N))^2$, we find that
    \[\{2DQ(N) - (2aL_1 - bL_2)\}^2 + 3D(2a\Delta(K,N))^2 = D(2aL_1 -bL_2)^2- DL^2_2\]
    where $D := b^2 - 4ac > 0$. Applying Theorem~\ref{thm:BP_counting} again and arguing as in the Case 2-1, it follows that the number of possible tuples $(K_1,K_2, N_1, N_2)$ is $\lesssim_{\varepsilon}C^{\varepsilon}$. 
\end{proof}

Now we are ready to prove Theorem \ref{theorem:long_time_Strchartz}. 
We find that it suffices to show
\begin{equation}
  \|e^{2\pi i tD^{2}_{x}}R_{C}f(x)\|_{L^{6}_{t}
\mathcal{L}^{6}_{x}([0,C^{2}]\times\R)}\lesssim_{\varepsilon} 
C^{\frac 13 + \varepsilon}\|R_{C}f(x)\|_{\mathcal{L}^2_{x}(\R)}
\label{eq:L6_estimate}
\end{equation}
because the general case follows by interpolating this with the trivial case $p =\infty$ and dividing the time interval $[0,T]$ into 
subintervals with length $C^2$. 
Let $\Lambda_{\omega, C} 
= \{\lambda = k_1 + \omega k_2\in \Z + \omega\Z\; ; \;|(k_1, k_2)|\sim C\}$. We expand the left-hand side of 
\eqref{eq:L6_estimate} as
\begin{align*}
  &\|e^{2\pi i tD^{2}_{x}}R_{C}f(x)\|^6_{L^{6}_{t}
  \mathcal{L}^{6}_{x}([0,C^{2}]\times\R)} \\
  &= \int^{C^2}_{0}
  \limitsup{L}{\infty}\frac{1}{2L}\int^{L}_{-L}|e^{2\pi itD^{2}_{x}}f(x)|^6
  dxdt\\
  &= \int^{C^2}_{0}
  \limitsup{L}{\infty}\frac{1}{2L}\int^{L}_{-L}\Bigl|\sum_{\lambda\in \Lambda_{\omega, C}}e^{2\pi i[\lambda x + |\lambda|^2 t]}a_{\lambda}\Bigr|^6 dxdt\\
  &= \int^{C^2}_{0}\sum_{\lambda_1 + \lambda_3 + \lambda_5= \lambda_2 + \lambda_4 + \lambda_6}
  e^{2\pi i\{(|\lambda_1|^2 + |\lambda_3|^2 + |\lambda_5|^2) - (|\lambda_2|^2 + |\lambda_4|^2 + |\lambda_6|^2)\}t}
  a_{\lambda_1}a_{\lambda_3}a_{\lambda_5} 
  \overline{a_{\lambda_2} a_{\lambda_4} a_{\lambda_6}}dt
\end{align*}
where we applied
\begin{align*}
  \limitsup{L}{\infty}\frac{1}{2L}\int^{L}_{-L}e^{2\pi i\{(\lambda_1 + \lambda_3 + \lambda_5) - (\lambda_2 + \lambda_4 + \lambda_6)\}x} 
  =\begin{cases}
    1 \quad (\lambda_1 + \lambda_3 + \lambda_5 = \lambda_2 + \lambda_4 + \lambda_6)\\
    0 \quad (\lambda_1 + \lambda_3 + \lambda_5 \neq \lambda_2 + \lambda_4 + \lambda_6).
  \end{cases}
\end{align*} 
\noindent{\textbf{Step 1. }}We first show \eqref{eq:L6_estimate} when $\bigl|(|\lambda_1|^2 + |\lambda_3|^2 + |\lambda_5|^2) 
- (|\lambda_2|^2 + |\lambda_4|^2 + |\lambda_6|^2)\bigr|< C^{-2}$. In this case,
we bound the above integral as
\begin{align*}
  &\Bigl|\int^{C^2}_{0}\sum_{\substack{\lambda_1 + \lambda_3 + \lambda_5 = \lambda_2 + \lambda_4 + \lambda_6\\
  \bigl|(|\lambda_1|^2 + |\lambda_3|^2 + |\lambda_5|^2) 
- (|\lambda_2|^2 + |\lambda_4|^2 + |\lambda_6|^2)\bigr|< C^{-2}}}
  e^{2\pi i\{(|\lambda_1|^2 + |\lambda_3|^2 + |\lambda_5|^2) - (|\lambda_2|^2 + |\lambda_4|^2 + |\lambda_6|^2)\}t}
  a_{\lambda_1}a_{\lambda_3}a_{\lambda_5} 
  \overline{a_{\lambda_2} a_{\lambda_4} a_{\lambda_6}}dt\Bigr|\\
  &\leq C^{2}\sum_{\substack{\lambda_1 + \lambda_3 + \lambda_5 = \lambda_2 + \lambda_4 + \lambda_6\\
  \bigl|(|\lambda_1|^2 + |\lambda_3|^2 + |\lambda_5|^2) 
- (|\lambda_2|^2 + |\lambda_4|^2 + |\lambda_6|^2)\bigr|< C^{-2}}}
  \bigl|a_{\lambda_1}a_{\lambda_3}a_{\lambda_5} 
  \overline{a_{\lambda_2} a_{\lambda_4} a_{\lambda_6}}\bigr|
\end{align*}
Hence, it is enough to show 
\[\sum_{\substack{\lambda_1 + \lambda_3 + \lambda_5 = \lambda_2 + \lambda_4 + \lambda_6\\
  \bigl|(|\lambda_1|^2 + |\lambda_3|^2 + |\lambda_5|^2) 
- (|\lambda_2|^2 + |\lambda_4|^2 + |\lambda_6|^2)\bigr|< C^{-2}}}
  \bigl|a_{\lambda_1}a_{\lambda_3}a_{\lambda_5} 
  \overline{a_{\lambda_2} a_{\lambda_4} a_{\lambda_6}}\bigr|\lesssim_{\varepsilon}C^{\varepsilon}.\]
From the AM-GM inequality, it follows that
\begin{align*}
  &\sum_{\substack{\lambda_1 + \lambda_3 + \lambda_5 = \lambda_2 + \lambda_4 + \lambda_6\\
  \bigl|(|\lambda_1|^2 + |\lambda_3|^2 + |\lambda_5|^2) 
- (|\lambda_2|^2 + |\lambda_4|^2 + |\lambda_6|^2)\bigr|< C^{-2}}}
  \bigl|a_{\lambda_1}a_{\lambda_3}a_{\lambda_5} 
  \overline{a_{\lambda_2} a_{\lambda_4} a_{\lambda_6}}\bigr|\\
  &\lesssim \sum_{\substack{\lambda_1 + \lambda_3 + \lambda_5 = \lambda_2 + \lambda_4 + \lambda_6\\
  \bigl|(|\lambda_1|^2 + |\lambda_3|^2 + |\lambda_5|^2) 
- (|\lambda_2|^2 + |\lambda_4|^2 + |\lambda_6|^2)\bigr|< C^{-2}}}
|a_{\lambda_1}a_{\lambda_3}a_{\lambda_5}|^2 \\
&\quad+  \sum_{\substack{\lambda_1 + \lambda_3 + \lambda_5 = \lambda_2 + \lambda_4 + \lambda_6\\
  \bigl|(|\lambda_1|^2 + |\lambda_3|^2 + |\lambda_5|^2) 
- (|\lambda_2|^2 + |\lambda_4|^2 + |\lambda_6|^2)\bigr|< C^{-2}}}
|a_{\lambda_2}a_{\lambda_4}a_{\lambda_6}|^2. 
\end{align*}
By symmetry and the Cauchy--Schwarz inequality, 
it is enough to show the following bound
\[\Bigl(\sum_{\lambda} |a_{\lambda}|^2\Bigr)^3 
\sup_{\lambda_1, \lambda_3, \lambda_5}\#\Bigl\{ (\lambda_2, \lambda_4, \lambda_6)\; ;\; \substack{\lambda_1 + \lambda_3 + \lambda_5 = \lambda_2 + \lambda_4 + \lambda_6\\
  |(|\lambda_1|^2 + |\lambda_3|^2 + |\lambda_5|^2) 
- (|\lambda_2|^2 + |\lambda_4|^2 + |\lambda_6|^2)|< C^{-2}} \Bigr\}
\lesssim_{\varepsilon}C^{\varepsilon}\Bigl(\sum_{\lambda} |a_{\lambda}|^2\Bigr)^3. \]
Thus, the remaining task is to establish the bound
\[\#\Bigl\{(\lambda_2, \lambda_4, \lambda_6)\; ;\; \substack{\lambda_2 + \lambda_4 + \lambda_6 = M\\
  \bigl||\lambda_2|^2 + |\lambda_4|^2 + |\lambda_6|^2 - S\bigr|< C^{-2}}\Bigr\}\lesssim_{\varepsilon}C^{\varepsilon}\]
for fixed $M, S$ with $|M|\lesssim C$ and $|S| \lesssim C^2$. 
Hereafter, we assume that 
$\omega$ is a solution to a quadratic equation
\[ax^2 + bx + c = 0\]
where $a,b,c\in \Z$ with $a > 0$ and consider 
two subcases. 

\smallskip
\noindent \textbf{Case 1 ($\mathbf{b=0}$). }
In this case, $\omega^2 = -\frac{c}{a}$ and the range of 
$\lambda^2_2 + \lambda^2_4 + \lambda^2_6$ is contained in
$\frac{1}{a}\Z + \omega \Z$, hence the above
set of $(\lambda_2, \lambda_4, \lambda_6)$ can be written as
\[\#\Bigl\{(\lambda_2, \lambda_4, \lambda_6)\; ;\; \substack{\lambda_2 + \lambda_4 + \lambda_6 = M\\
  \bigl||\lambda_2|^2 + |\lambda_4|^2 + |\lambda_6|^2 - S\bigr|< C^{-2}}\Bigr\}=
  \sum_{\substack{|t - S|< C^{-2} \\ t= \frac{1}{a}t_1 + \omega t_2, 
  t_1, t_2\in \Z }}
\# \Bigl\{(\lambda_2,\lambda_4, \lambda_6)\; ; \;
\substack{\lambda_2 + \lambda_4 + \lambda_6 = M\\ 
\lambda^2_2 + \lambda^2_4 + \lambda^2_6 = t}\Bigr\}\]
From Lemma \ref{lemma:additive_energy}, we obtain
\[\# \Bigl\{(\lambda_2,\lambda_4, \lambda_6)\; ; \;
\substack{\lambda_2 + \lambda_4 + \lambda_6 = M\\ 
\lambda^2_2 + \lambda^2_4 + \lambda^2_6 = t}\Bigr\}\lesssim_{\varepsilon} C^{\varepsilon}\]
for each $t_1, t_2 \in \Z$ with $|t - S| < C^{-2}$ 
and $|t_1|, |t_2|\lesssim C^2$. Hence, we have to
count the possible values of $t$, that is, we need to show
\begin{equation}
  \sum_{\substack{|t - S|< C^{-2} \\ t= \frac{1}{a}t_1 + \omega t_2, 
  t_1, t_2\in \Z }} 1
\lesssim_{\varepsilon}C^{\varepsilon}.\label{eq:from_roth}
\end{equation}
Since $S \in \frac{1}{a}\Z + \omega\Z$, $|t - S| < C^{-2}$,
it is enough to show 
\[\# \{m_1, m_2\in \Z\; ; \; |m_1 - \omega m_2| \lesssim C^{-2}, 
\;|m_1|, |m_2| \lesssim C^2\} 
\lesssim_{\varepsilon} C^{\varepsilon}. \]
Since $\omega$ is algebraic, from the Roth theorem (Theorem \ref{theorem:Roth}), we find that 
\[|m_1 - \omega m_2| 
\gtrsim_{\varepsilon} 
\max\{|m_1|^{-1-\varepsilon}, |m_2|^{-1-\varepsilon}\}\] 
and then it follows that
\[|m_1|, |m_2|\gtrsim_{\varepsilon}C^{\frac{2}{1 + \varepsilon}}.\]
Therefore, by combining this bound with $|m_1 - \omega m_2| \lesssim C^{-2}$ 
and $|m_1|, |m_2|\lesssim C^{2}$, we have
\[\# \{m_1, m_2\in \Z\; ; \; |m_1 - \omega m_2| \lesssim C^{-2}, 
\;|m_1|, |m_2| \lesssim C^2\} 
\lesssim_{\varepsilon'} C^{\frac{2\varepsilon'}{1 + \varepsilon'}}
\lesssim_{\varepsilon} C^{\varepsilon}. \]
In the last inequality, we take $\varepsilon = 2\varepsilon'$. From this,
we conclude \eqref{eq:from_roth} and obtain the desired estimate.

\smallskip
\noindent \textbf{Case 2 ($\mathbf{b\neq0}$). }
In this case, $\omega^2 = -\frac{b}{a}\omega - \frac{c}{a}$. Since the
range of $\lambda^2_2 + \lambda^2_4 + \lambda^2_6$ is contained in
$\frac{1}{a}\Z + \frac{\omega}{a}\Z$, we have
\[\#\Bigl\{(\lambda_2, \lambda_4, \lambda_6)\; ;\; \substack{\lambda_2 + \lambda_4 + \lambda_6 = M\\
  \bigl||\lambda_2|^2 + |\lambda_4|^2 + |\lambda_6|^2 - S\bigr|< C^{-2}}\Bigr\}=
  \sum_{\substack{|t - S|< C^{-2} \\ t= \frac{1}{a}t_1 
  + \frac{\omega}{a} t_2, 
  t_1, t_2\in \Z }}
\# \Bigl\{(\lambda_2,\lambda_4, \lambda_6)\; ; \;
\substack{\lambda_2 + \lambda_4 + \lambda_6 = M\\ 
\lambda^2_2 + \lambda^2_4 + \lambda^2_6 = t}\Bigr\}. \]
By an argument similar to that in the Case 1, we reduce our 
estimate to
\[\# \{m_1, m_2\in \Z\; ; \; |m_1 - \omega m_2| \lesssim C^{-2}, 
\;|m_1|, |m_2| \lesssim C^2\} 
\lesssim_{\varepsilon} C^{\varepsilon}\]
and obtain the desired estimate. 
\smallskip

\noindent{\textbf{Step 2. }}For a dyadic number $A \in (C^{-2}, 100C^2]$, define a set 
$\Lambda_{\omega, C,A}$ as
\[\Lambda_{\omega, C,A} := \Bigl\{\lambda_1,\cdots, \lambda_6\in \Lambda_{\omega,C}
\; ; \; \substack{\lambda_1 + \lambda_3 + \lambda_5 = \lambda_2 + \lambda_4 + \lambda_6,\\
A \le |(|\lambda_1|^2 + |\lambda_3|^2 + |\lambda_5|^2) - 
(|\lambda_2|^2 + |\lambda_4|^2 + |\lambda_6|^2)| < 2A}\Bigr\}. \]
We next consider the case when $\lambda_i$'s are contained in these sets.
In this step, from using $\lambda_i\in \Lambda_{\omega,C,A}$, we have the bound
\begin{align*}
  &\int^{C^2}_{0}\sum_{\lambda_i\in \Lambda_{\omega,C,A}}e^{2\pi i\{(|\lambda_1|^2 + |\lambda_3|^2 + |\lambda_5|^2) - (|\lambda_2|^2 + |\lambda_4|^2 + |\lambda_6|^2)\}t}
  a_{\lambda_1}a_{\lambda_3}a_{\lambda_5} 
  \overline{a_{\lambda_2} a_{\lambda_4} a_{\lambda_6}}dt\\
  &\lesssim A^{-1}\sum_{\lambda_i\in \Lambda_{\omega,C,A}}
  |a_{\lambda_1}a_{\lambda_3}a_{\lambda_5}
  a_{\lambda_2}a_{\lambda_4}a_{\lambda_6}|. 
\end{align*}
Hence, it is enough to show 
\[\sum_{\lambda_i\in \Lambda_{\omega,C,A}}
|a_{\lambda_1}a_{\lambda_3}a_{\lambda_5}
a_{\lambda_2}a_{\lambda_4}a_{\lambda_6}|
\lesssim_{\varepsilon} A C^{2+ \varepsilon} 
\Bigl(\sum_{\lambda}|a_{\lambda}|^2\Bigr)^3. \]
We further decompose $\Lambda_{\omega, C,A}$ into the following subsets
\begin{equation}
  \Sigma_{X}:= \{\lambda_1, \cdots, \lambda_6\in 
  \Lambda_{\omega, C,A}\; ;\; 
  \lambda^2_1 + \lambda^2_3 + \lambda^2_5
  -\lambda^2_2 - \lambda^2_4 - \lambda^2_6= X\}\label{eq:dyadic_cases}
\end{equation}
where $A \leq X < 2A$. As in the Step 1, we assume $\omega$ to be
a solution to a quadratic equation 
\[ax^2 + bx + c = 0\]
where $a,b,c\in \Z$ with $a\neq 0$. As we saw in Step 1, depending
on $b$, the
range of 
\[(|\lambda_1|^2 + |\lambda_3|^2 + |\lambda_5|^2) - 
(|\lambda_2|^2 + |\lambda_4|^2 + |\lambda_6|^2)\]
is different. Hence, we set 
\begin{align*}
  R_{b} := \begin{cases}
    \frac{1}{a}\Z + \omega\Z \quad &(b = 0),\\
    \frac{1}{a}\Z + \frac{\omega}{a}\Z &(b\neq 0). 
  \end{cases}
\end{align*}
Then, \eqref{eq:dyadic_cases} is modified as 
\[\sum_{\substack{X\in R_b
\\ A\leq X < 2A}}\sum_{\lambda_i\in \Sigma_{X}}
|a_{\lambda_1}a_{\lambda_3}a_{\lambda_5}
a_{\lambda_2}a_{\lambda_4}a_{\lambda_6}| 
\lesssim_{\varepsilon} A C^{2 + \varepsilon}. \]
Therefore, it suffices to show the following two inequalities:
\begin{itemize}
  \item \begin{equation}
  \sum_{\substack{X\in R_b\\ 
  A \leq X < 2A}}1 \lesssim_{\varepsilon}A C^{2 + \varepsilon}
  \label{eq:Xbound}
  \end{equation}

  \item For each $X\in R_b$ with $A \leq X < 2A$, 
  \begin{equation}
    \sum_{\lambda_i\in \Sigma_{X}}
    |a_{\lambda_1}a_{\lambda_3}a_{\lambda_5}
    a_{\lambda_2}a_{\lambda_4}a_{\lambda_6}| 
    \lesssim_{\varepsilon} C^{\varepsilon}\Bigl(\sum_{\lambda}|a_{\lambda}|^2\Bigr)^3
  \label{eq:Lamx_bound}
\end{equation}
\end{itemize}
We first show \eqref{eq:Xbound}. We only consider the case when $AC^2 > 1$ otherwise 
this estimate is trivial. Since $R_b$ denotes $\frac{1}{a}\Z + \omega\Z$
or $\frac{1}{a}\Z + \frac{\omega}{a}\Z$, it is enough to establish 
\[\#\{m_1 + \omega m_2\; ; \;m_1, m_2\in \Z,\; |m_1|, |m_2|\lesssim C^2,\;
|m_1 + \omega m_2| \sim A\} \lesssim_{\varepsilon} A C^{2 + \varepsilon}. \]
If $A > 1$, then this bound immediately follows because once we fix
$m_1\in \Z$ from $O(C^2)$ choices, then
there are only $O(A)$ choices of $m_2\in \Z$ satisfying $|m_1 + \omega m_2| \sim A$. 
We next consider when $A \leq 1$. From Theorem \ref{theorem:Roth}, it follows that
\[A \gtrsim |m_1 + \omega m_2| \gtrsim_{\varepsilon} |m_1|^{-1-\varepsilon}\]
and hence, we have $|m_1|\gtrsim_{\varepsilon}A^{-\frac{1}{1 + \varepsilon}}$. 
Therefore, there are $O(A^{\frac{1}{1+ \varepsilon}}C^2)$ choices of $m_1$. 
Once $m_1$ is fixed, there are $O(1)$ choices of $m_2$, 
thus we obtain \eqref{eq:Xbound} from bounding $A^{\frac{1}{1 + \varepsilon}}C^2$ 
by $A C^{2 + \varepsilon}$ up to some constants. 

We next show \eqref{eq:Lamx_bound}. The proof is in the same line in the Step 1. 
From the AM-GM inequality, the left hand side is bounded by
\[\lesssim \sum_{\lambda_i \in \Sigma_X}
|a_{\lambda_1}a_{\lambda_3}a_{\lambda_5}|^2 
+  \sum_{\lambda_i \in \Sigma_X}
|a_{\lambda_2}a_{\lambda_4}a_{\lambda_6}|^2. \]
From the symmetry and the Cauchy--Schwarz, it is enough to show
\[\Bigl(\sum_{\lambda} |a_{\lambda}|^2\Bigr)^3 
\sup_{\lambda_1, \lambda_3, \lambda_5}\#\Bigl\{ (\lambda_2, \lambda_4, \lambda_6)\; ;\; \substack{\lambda_1 + \lambda_3 + \lambda_5 = \lambda_2 + \lambda_4 + \lambda_6\\
  |(|\lambda_1|^2 + |\lambda_3|^2 + |\lambda_5|^2) 
- (|\lambda_2|^2 + |\lambda_4|^2 + |\lambda_6|^2)| = X} \Bigr\}
\lesssim_{\varepsilon}C^{\varepsilon}\Bigl(\sum_{\lambda} |a_{\lambda}|^2\Bigr)^3. \]
Hence, the remaining task is to prove the bound
\[\#\Bigl\{(\lambda_2, \lambda_4, \lambda_6)\; ;\; \substack{\lambda_2 + \lambda_4 + \lambda_6 = M\\
  \lambda^2_2 + \lambda^2_4 + \lambda^2_6 = S}\Bigr\}
  \lesssim_{\varepsilon}C^{\varepsilon}\]
for fixed $M, S$ with $|M|\lesssim C$ and $|S| \lesssim C^2$. 
This immediately follows from Lemma \ref{lemma:additive_energy}. 

Now, we have 
\[\sum_{\substack{X\in R_b
\\ A\leq X < 2A}}\sum_{\lambda_i\in \Sigma_{X}}
|a_{\lambda_1}a_{\lambda_3}a_{\lambda_5}
a_{\lambda_2}a_{\lambda_4}a_{\lambda_6}| 
\lesssim_{\varepsilon} A C^{2 + \varepsilon}\Bigl(\sum_{\lambda}|a_{\lambda}|^2\Bigr)^3 \]
and hence 
\[\Bigl|\int^{C^2}_{0}\sum_{\lambda_i\in \Lambda_{\omega, C,A}}
  e^{2\pi i\{(|\lambda_1|^2 + |\lambda_3|^2 + |\lambda_5|^2) - (|\lambda_2|^2 + |\lambda_4|^2 + |\lambda_6|^2)\}t}
  a_{\lambda_1}a_{\lambda_3}a_{\lambda_5} 
  \overline{a_{\lambda_2} a_{\lambda_4} a_{\lambda_6}}dt\Bigr|
  \lesssim C^{2 + \varepsilon}\Bigl(\sum_{\lambda}|a_{\lambda}|^2\Bigr)^3. \]
  This completes the proof. 

\section{Proof of Theorem \ref{theorem:degree3}}
In this section, we assume that $\omega\in \R\backslash\Q$ is an algebraic number that has degree greater than $2$. 
\subsection{$6 \leq p < 14$}
We first prove \eqref{eq:main_thm_degree3} for $C^2\leq T < C^4$. 
As in the proof of Theorem \ref{theorem:long_time_Strchartz}, it suffices to
show
\begin{equation*}
  \|e^{2\pi i tD^{2}_{x}}f(x)\|_{L^6_t \mathcal{L}^6_{x}([0, C^2] \times\R)}
  \lesssim_{\varepsilon}C^{\frac 23 + \varepsilon}\|f(x)\|_{\mathcal{L}^2_{x}(\R)}. 
\end{equation*}
The cases $p > 6$ follow from the interpolation between this estimate and 
the trivial case $p = \infty$. If $T > C^2$, we split the interval $[0,T]$ into
$T / C^2$ subintervals of length $C^2$ and apply the above estimate on each 
subinterval. 
From the same computation in the previous section, we have
\begin{align*}
  &\|e^{2\pi i tD^{2}_{x}}R_{C}f(x)\|^6_{L^{6}_{t}\mathcal{L}^6_x}\\
  &\;= \Bigl|\sum_{\lambda_1 + \lambda_3 + \lambda_5= \lambda_2 + \lambda_4 + \lambda_6}
  \int^{C^2}_{0}
  e^{2\pi i\{(|\lambda_1|^2 + |\lambda_3|^2 + |\lambda_5|^2) - (|\lambda_2|^2 + |\lambda_4|^2 + |\lambda_6|^2)\}t}
  a_{\lambda_1}a_{\lambda_3}a_{\lambda_5} 
  \overline{a_{\lambda_2} a_{\lambda_4} a_{\lambda_6}}dt\Bigr|. 
\end{align*}
We first settle the case when $|(\lambda^2_1 + \lambda^2_3 + \lambda^2_5)
- (\lambda^2_2 + \lambda^2_4 + \lambda^2_6)| <  C^{-2}$. Set 
\[\Lambda_{0} := 
\{(\lambda_1,\lambda_2, \lambda_3, \lambda_4, \lambda_5, \lambda_6)\;; \; 
\substack{\lambda_1 + \lambda_3 + \lambda_5 = \lambda_2 + \lambda_4 + \lambda_6 \\
|(\lambda^2_1 + \lambda^2_3 + \lambda^2_5)
- (\lambda^2_2 + \lambda^2_4 + \lambda^2_6)| <  C^{-2}}\}. \]
By the AM-GM inequality and the Cauchy--Schwarz, it holds that
\begin{align*}
  &\Bigl|\sum_{\lambda_i \in \Lambda_{0}}
  \int^{C^2}_{0}
  e^{2\pi i\{(|\lambda_1|^2 + |\lambda_3|^2 + |\lambda_5|^2) - (|\lambda_2|^2 + |\lambda_4|^2 + |\lambda_6|^2)\}t}
  a_{\lambda_1}a_{\lambda_3}a_{\lambda_5} 
  \overline{a_{\lambda_2} a_{\lambda_4} a_{\lambda_6}}dt\Bigr|\\
&\leq C^{2}\sum_{\lambda_i \in \Lambda_{0}}
|a_{\lambda_1}a_{\lambda_3}a_{\lambda_5}a_{\lambda_2}a_{\lambda_4}a_{\lambda_6}|\\
&\lesssim C^{2}\Bigl(\sum_{\lambda_i\in \Lambda_0}
|a_{\lambda_1}a_{\lambda_3}a_{\lambda_5}|^2 + \sum_{\lambda_i\in \Lambda_0}
|a_{\lambda_2}a_{\lambda_4}a_{\lambda_6}|^2\Bigr). 
\end{align*}
By symmetry, it is enough to consider the first term and we find that
\[\sum_{\lambda_i\in \Lambda_0}
|a_{\lambda_1}a_{\lambda_3}a_{\lambda_5}|^2
\leq \sup_{\lambda_1, \lambda_3,\lambda_5}
\#\{(\lambda_2, \lambda_4, \lambda_6)\; ; \; 
\substack{\lambda_1 + \lambda_3 + \lambda_5 = \lambda_2 + \lambda_4 + \lambda_6 \\
|(\lambda^2_1 + \lambda^2_3 + \lambda^2_5)
- (\lambda^2_2 + \lambda^2_4 + \lambda^2_6)| <  C^{-2}}\}
\Bigl(\sum_{\lambda}|a_{\lambda}|^2\Bigr)^3. \]
Note that since $\omega$ is of degree greater than $2$, $(1,\omega, \omega^2)$
are linearly independent over $\Q$. Hence, we are led to show
\[\#\{(\lambda_2, \lambda_4, \lambda_6)\; ; \; 
\substack{\lambda_2 + \lambda_4 + \lambda_6 = M \\
|(\lambda^2_2 + \lambda^2_4 + \lambda^2_6) - S| <  C^{-2}}\}
\lesssim_{\varepsilon} C^{2 + \varepsilon}\]
for fixed $M\in \Z + \omega\Z$ with $|M|\lesssim C$ and
fixed $S\in \Z + \omega\Z + \omega^2 \Z$ with $|S| \lesssim C^2$. Similar to the 
proof of Theorem \ref{theorem:long_time_Strchartz}, it suffices to show
\[\#\{t = t_1 + \omega t_2 + \omega^2 t_{3}\; ; \;
|t - S| < C^{-2}, |t_1|, |t_2|, |t_3| \lesssim C^2\} \lesssim_{\varepsilon}
C^{2 + \varepsilon}\]
For two elements of this set $t_{1} + \omega t_2 + \omega^2 t_{3}, 
s_{1} + \omega s_{2} + \omega^2 s_{3}$, set $m_i := t_i - s_i$ $(i = 1,2,3)$. 
Then we have
\[|m_1 + \omega m_2 + \omega^2 m_3| \lesssim C^{-2}. \]
From this inequality and Theorem \ref{theorem:multi_roth}, we obtain 
\[\max\{|m_1|, |m_2|, |m_3|\} \gtrsim_{\varepsilon} C^{\frac{2}{2 + \varepsilon}}. \]
Thus, the set $\{t = t_1 + \omega t_2 + \omega^2 t_{3}\; ; \;
|t - S| < C^{-2}, |t_1|, |t_2|, |t_3| \lesssim C^2\}$ is $C^{\frac{2}{2 + \varepsilon}}$-separated. Noting that this set is covered by a rectangle with dimensions $1\times C^2 \times C^2$, we obtain
\[\lesssim_{\varepsilon}C^{\frac{2 + 2\varepsilon}{2 + \varepsilon}}
C^{\frac{2 + 2\varepsilon}{2 + \varepsilon}} \leq C^{2 + 2\varepsilon}. \]
This finishes the proof of the case when $|(\lambda^2_1 + \lambda^2_3 + \lambda^2_5)
- (\lambda^2_2 + \lambda^2_4 + \lambda^2_6)| <  C^{-2}$. We next consider the case 
when $|(\lambda^2_1 + \lambda^2_3 + \lambda^2_5)
- (\lambda^2_2 + \lambda^2_4 + \lambda^2_6)| \geq  C^{-2}$. As in the previous 
proof, for a dyadic number $A \in (C^{-2}, 100C^2]$, we define a set 
$\Lambda_{\omega, C,A}$ as
\[\Lambda_{\omega, C,A} := \Bigl\{\lambda_1,\cdots, \lambda_6\in \Lambda_{\omega,C}
\; ; \; \substack{\lambda_1 + \lambda_3 + \lambda_5 = \lambda_2 + \lambda_4 + \lambda_6,\\
A \le |(|\lambda_1|^2 + |\lambda_3|^2 + |\lambda_5|^2) - 
(|\lambda_2|^2 + |\lambda_4|^2 + |\lambda_6|^2)| < 2A}\Bigr\}\]
and we have the bound
\begin{align*}
  &\int^{C^2}_{0}\sum_{\lambda_i\in \Lambda_{\omega,C,A}}e^{2\pi i\{(|\lambda_1|^2 + |\lambda_3|^2 + |\lambda_5|^2) - (|\lambda_2|^2 + |\lambda_4|^2 + |\lambda_6|^2)\}t}
  a_{\lambda_1}a_{\lambda_3}a_{\lambda_5} 
  \overline{a_{\lambda_2} a_{\lambda_4} a_{\lambda_6}}dt\\
  &\lesssim A^{-1}\sum_{\lambda_i\in \Lambda_{\omega,C,A}}
  |a_{\lambda_1}a_{\lambda_3}a_{\lambda_5}
  a_{\lambda_2}a_{\lambda_4}a_{\lambda_6}|. 
\end{align*}
So, we are led to show
\[\sum_{\lambda_i\in \Lambda_{\omega,C,A}}
|a_{\lambda_1}a_{\lambda_3}a_{\lambda_5}
a_{\lambda_2}a_{\lambda_4}a_{\lambda_6}|
\lesssim_{\varepsilon} A C^{4+ \varepsilon} 
\Bigl(\sum_{\lambda}|a_{\lambda}|^2\Bigr)^3. \]
We further decompose the sets $\Lambda_{\omega,C,A}$ into the following subsets
\begin{equation*}
  \Sigma_{X}:= \{\lambda_1, \cdots, \lambda_6\in 
  \Lambda_{\omega, C,A}\; ;\; 
  \lambda^2_1 + \lambda^2_3 + \lambda^2_5
  -\lambda^2_2 - \lambda^2_4 - \lambda^2_6= X\}
\end{equation*} and consider
\[\sum_{\substack{X\in \Z + \omega\Z + \omega^2 \Z
\\ A\leq X < 2A}}\sum_{\lambda_i\in \Sigma_{X}}
|a_{\lambda_1}a_{\lambda_3}a_{\lambda_5}
a_{\lambda_2}a_{\lambda_4}a_{\lambda_6}| 
\lesssim_{\varepsilon} A C^{4 + \varepsilon}\Bigl(\sum_{\lambda}|a_{\lambda}|^2\Bigr)^3. \]
Therefore, it suffices to show the following two claims:
\begin{itemize}
  \item \begin{equation*}
  \sum_{\substack{X\in \Z + \omega\Z + \omega^2\Z\\ 
  A \leq X < 2A}}1 \lesssim_{\varepsilon}A C^{4 + \varepsilon}
  \end{equation*}

  \item For each $X\in \Z + \omega\Z + \omega^2 \Z$ with $A \leq X < 2A$, 
  \begin{equation*}
    \sum_{\lambda_i\in \Sigma_{X}}
    |a_{\lambda_1}a_{\lambda_3}a_{\lambda_5}
    a_{\lambda_2}a_{\lambda_4}a_{\lambda_6}| 
    \lesssim_{\varepsilon} C^{\varepsilon}\Bigl(\sum_{\lambda}|a_{\lambda}|^2\Bigr)^3
\end{equation*}
\end{itemize}
These inequalities follow from Theorem \ref{theorem:multi_roth} and Lemma \ref{lemma:additive_energy} as in the previous cases. So, we present them briefly here. First, for 
\[
  \sum_{\substack{X\in \Z + \omega\Z + \omega^2\Z\\ 
  A \leq X < 2A}}1 \lesssim_{\varepsilon}A C^{4 + \varepsilon}, 
\]
we need to consider two subcases: (a) $A \geq 1$ and (b) $A < 1$. The case (a) is easy because if $X = t_1 + \omega t_2 + \omega^{2}t_3$, we choose $t_2, t_3$ from $O(C^{2}\times C^{2})$ choices and then choose $t_1$ from $O(A)$ choices. For the case (b), from Theorem~\ref{theorem:multi_roth} with a similar argument as in the previous case, we find that the set $\{X = t_1 + \omega t_2 + \omega^2 t_{3}\; ; \;
X\sim A, |t_1|, |t_2|, |t_3| \lesssim C^2\}$ is $A^{\frac{1}{2 + \varepsilon}}$-separated. Noting that this set is covered by a rectangle with dimensions $1\times C^2 \times C^2$, we obtain
\[\lesssim_{\varepsilon}A^{\frac{1}{2 + \varepsilon}}C^{2}
A^{\frac{1}{2 + \varepsilon}}C^{2} \lesssim A^{\frac{2}{2 + \varepsilon}}C^{4}. \]

For 
\begin{equation*}
    \sum_{\lambda_i\in \Sigma_{X}}
    |a_{\lambda_1}a_{\lambda_3}a_{\lambda_5}
    a_{\lambda_2}a_{\lambda_4}a_{\lambda_6}| 
    \lesssim_{\varepsilon} C^{\varepsilon}\Bigl(\sum_{\lambda}|a_{\lambda}|^2\Bigr)^3, 
\end{equation*}
applying the AM-GM inequality and the Cauchy--Schwarz, the left-hand side is bounded by
\[\Bigl(\sum_{\lambda} |a_{\lambda}|^2\Bigr)^3 
\sup_{\lambda_1, \lambda_3, \lambda_5}\#\Bigl\{ (\lambda_2, \lambda_4, \lambda_6)\; ;\; \substack{\lambda_1 + \lambda_3 + \lambda_5 = \lambda_2 + \lambda_4 + \lambda_6\\
  |(|\lambda_1|^2 + |\lambda_3|^2 + |\lambda_5|^2) 
- (|\lambda_2|^2 + |\lambda_4|^2 + |\lambda_6|^2)| = X} \Bigr\}
\]
So, we need to show 
\[\#\Bigl\{(\lambda_2, \lambda_4, \lambda_6)\; ;\; \substack{\lambda_2 + \lambda_4 + \lambda_6 = M\\
  \lambda^2_2 + \lambda^2_4 + \lambda^2_6 = S}\Bigr\}
  \lesssim_{\varepsilon}C^{\varepsilon}. 
\]
This immediately follows from Lemma~\ref{lemma:additive_energy}. 

Next, we present the proof of \eqref{eq:main_thm_degree3} for $T \geq C^4$. In this case, we will show
\begin{equation*}
  \|e^{2\pi i tD^{2}_{x}}f(x)\|_{L^6_t \mathcal{L}^6_{x}([0, C^4] \times\R)}
  \lesssim_{\varepsilon}C^{\frac 23 + \varepsilon}\|f(x)\|_{\mathcal{L}^2_{x}(\R)}. 
\end{equation*}
We also have 
\begin{align*}
  &\|e^{2\pi i tD^{2}_{x}}R_{C}f(x)\|^6_{L^{6}_{t}\mathcal{L}^6_x}\\
  &\;= \Bigl|\sum_{\lambda_1 + \lambda_3 + \lambda_5= \lambda_2 + \lambda_4 + \lambda_6}
  \int^{C^4}_{0}
  e^{2\pi i\{(|\lambda_1|^2 + |\lambda_3|^2 + |\lambda_5|^2) - (|\lambda_2|^2 + |\lambda_4|^2 + |\lambda_6|^2)\}t}
  a_{\lambda_1}a_{\lambda_3}a_{\lambda_5} 
  \overline{a_{\lambda_2} a_{\lambda_4} a_{\lambda_6}}dt\Bigr|. 
\end{align*}
As in the previous case, we consider two subcases (a) $|(\lambda^2_1 + \lambda^2_3 + \lambda^2_5)
- (\lambda^2_2 + \lambda^2_4 + \lambda^2_6)| <  C^{-4}$ and (b) $|(\lambda^2_1 + \lambda^2_3 + \lambda^2_5)
- (\lambda^2_2 + \lambda^2_4 + \lambda^2_6)| \geq  C^{-4}$. For (a), the proof is
also reduced to establish the bound 
\[\#\{t = t_1 + \omega t_2 + \omega^2 t_{3}\; ; \;
|t - S| < C^{-4}, |t_1|, |t_2|, |t_3| \lesssim C^2\} \lesssim_{\varepsilon}
C^{\varepsilon}. \]
For two elements of this set $t_{1} + \omega t_2 + \omega^2 t_{3}, 
s_{1} + \omega s_{2} + \omega^2 s_{3}$, set $m_i := t_i - s_i$ $(i = 1,2,3)$. 

Then we have
\[|m_1 + \omega m_2 + \omega^2 m_3| \lesssim C^{-4}. \]
From this inequality and Theorem \ref{theorem:multi_roth}, we obtain 
\[\max\{|m_1|, |m_2|, |m_3|\} \gtrsim_{\varepsilon} C^{\frac{4}{2 + \varepsilon}}. \]
and find that the set $\{t = t_1 + \omega t_2 + \omega^2 t_{3}\; ; \;
|t - S| < C^{-4}, |t_1|, |t_2|, |t_3| \lesssim C^2\}$ is $C^{\frac{4}{2 + \varepsilon}}$-separated. Noting that this set is covered by a rectangle with dimensions $1\times C^2 \times C^2$, we obtain
\[\lesssim_{\varepsilon}C^{\frac{2\varepsilon}{2 + \varepsilon}}
C^{\frac{2\varepsilon}{2 + \varepsilon}} \leq C^{2\varepsilon}. \]

For (b), we define similar decompositions $\Lambda_{\omega, C,A}$ and $\Sigma_{X}$ as before for $A\in (C^{-4}, 100 C^2)$ and $A \leq X < 2A$. Then we reduce the estimate to
the following two claims:
\begin{itemize}
  \item \begin{equation*}
  \sum_{\substack{X\in \Z + \omega\Z + \omega^2\Z\\ 
  A \leq X < 2A}}1 \lesssim_{\varepsilon}A C^{4 + \varepsilon}. 
  \end{equation*}

  \item For each $X\in \Z + \omega\Z + \omega^2 \Z$ with $A \leq X < 2A$, 
  \begin{equation*}
    \sum_{\lambda_i\in \Sigma_{X}}
    |a_{\lambda_1}a_{\lambda_3}a_{\lambda_5}
    a_{\lambda_2}a_{\lambda_4}a_{\lambda_6}| 
    \lesssim_{\varepsilon} C^{\varepsilon}\Bigl(\sum_{\lambda}|a_{\lambda}|^2\Bigr)^3. 
\end{equation*}
\end{itemize}
Their proofs are the same as those in the case when $C^{2}\leq T < C^{4}$, so we omit their details.

\subsection{$p \geq 14$}
In this subsection, we prove \eqref{eq:main_thm_degree3_l14} by following the idea in \cite{Degn_Germain_Guth_Myerson}. 
By interpolating with $p = \infty$, it suffices to show the case when $p = 14$. 
The main ingredient to prove this case is counting the number of integer solutions to a quadratic Parsell--Vinogradov system. For our purpose, we consider the following system: For fixed integers $s\geq 1$, $N\geq 1$, and
$a,b,\alpha, \beta, \gamma\in \Z$, 
\begin{align*}
  X_{1} + \cdots + X_{s} -(X_{s + 1} + \cdots + X_{2s})&= a,\\
  Y_{1} + \cdots + Y_{s} -(Y_{s + 1} + \cdots + Y_{2s})&= b,\\
  X^2_{1} + \cdots + X^2_{s} -(X^2_{s + 1} + \cdots + X^2_{2s})&= \alpha,\\
  X_{1}Y_{1} + \cdots + X_{s}Y_{s} -(X_{s + 1}Y_{s + 1} + \cdots +X_{2s}Y_{2s})&= \beta,\\
  Y^2_{1} + \cdots + Y^2_{s} -(Y^2_{s + 1} + \cdots + Y^2_{2s})&=\gamma
\end{align*}
with $-N \leq X_{i}, Y_{i}\leq N$. Let $\Omega^{s,N}_{a,b,\alpha,\beta,\gamma}$ be the set of integer tuples satisfying $-N\leq X_{i}, Y_{i}\leq N$ which satisfy the above system for given parameters $s,N,a,b,\alpha, \beta$, and $\gamma$ and let $J_{s}(N,S)$ denote the number of integer solutions to this system inside the subset $S\subset[-N, N]^2$. In Bourgain--Demeter~\cite{Bourgain_Demeter_Weyl_sum}, the decoupling estimate for two-dimensional manifold 
\[\{(t,s,t^2, ts, s^2)\; ;\; (t,s)\in [0,1]^2\}\subset\R^{5}\]
was established. By using the decoupling estimate with a slightly modified argument in the proof of Corollary 2.3 in \cite{Bourgain_Demeter_Weyl_sum}, we obtain the bound for $J_{s}(N,S)$: 
\begin{theorem}[Theorem 2.1 in \cite{Bourgain_Demeter_Weyl_sum}]
  Let $s\geq 1$, $N \geq 1$, and $a,b,\alpha,\beta,\gamma\in \Z$. Let $S\subset [-N,N]^2$ be any subset. Then the estimate 
  \begin{equation}
    J_{s}(N,S)\lesssim_{\varepsilon,s} N^{\varepsilon}(N^{2s - 2} + N^{4s - 10})|S|
    \label{eq:Bourgain_Demeter_PV}
  \end{equation}
  holds. 
\end{theorem}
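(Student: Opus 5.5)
The bound \eqref{eq:Bourgain_Demeter_PV} will follow from the Bourgain--Demeter decoupling theorem for the surface $\{(t,s,t^2,ts,s^2)\}\subset\R^5$, in the form of the sharp $\ell^2$ mean value estimate for the associated exponential sum, combined with a Fourier-coefficient extraction for the inhomogeneous system.

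\textbf{Exponential sum and Fourier extraction.} For $\theta=(\theta_1,\dots,\theta_5)\in\T^5$, set
\[
F_S(\theta):=\sum_{(X,Y)\in S}e^{2\pi i(\theta_1X+\theta_2Y+\theta_3X^2+\theta_4XY+\theta_5Y^2)}.
\]
Expanding $|F_S|^{2s}=F_S^s\overline{F_S}^{\,s}$ and integrating against the character $e^{-2\pi i(a\theta_1+b\theta_2+\alpha\theta_3+\beta\theta_4+\gamma\theta_5)}$, orthogonality on $\T^5$ gives
\[
J_s(N,S)=\int_{\T^5}|F_S(\theta)|^{2s}e^{-2\pi i(a\theta_1+\cdots+\gamma\theta_5)}\,d\theta\le\int_{\T^5}|F_S|^{2s}\,d\theta.
\]
So the inhomogeneous count is dominated by the homogeneous mean value, and we reduce to bounding $\|F_S\|_{L^{2s}(\T^5)}^{2s}$.

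\textbf{Decoupling.} Rescale $(X,Y)\mapsto(X/N,Y/N)$, so the frequencies become points on the surface $\{(t,s,t^2,ts,s^2)\}$ that are $1/N$-separated. The relevant periodicity region is the box $[0,N]^2\times[0,N^2]^3$, which dominates the scale-$N^2$ cubes needed for decoupling at scale $\delta=1/N$. Partition the frequencies into caps of size $\delta\times\delta$; each cap contains at most one point of $S$. The $\ell^2$ decoupling theorem of \cite{Bourgain_Demeter_Weyl_sum} for this nondegenerate surface gives, at the critical exponent $2s=10$,
\[
\|F_S\|_{L^{2s}}\lesssim_\varepsilon N^{\varepsilon}\Big(\sum_{\text{caps}}\|F_{\text{cap}}\|_{L^{2s}}^2\Big)^{1/2}.
\]
For $2s\ge10$, interpolate with the trivial $L^\infty$ bound $|F_S|\le|S|$, and use $|S|\le N^2$ to bound the extra factors by powers of $N$. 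For $2s\le10$, decoupling gives the square-function-type bound, which after averaging yields $N^{\varepsilon}|S|^{s}$.

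\textbf{Collecting the terms.} Each cap contributes $\|F_{\text{cap}}\|_{L^{2s}}^2\sim1$ after normalizing. This gives
\[
\int_{\T^5}|F_S|^{2s}\lesssim_\varepsilon N^{\varepsilon}|S|^{s}\max\big(1,\;|S|^{s}N^{-10}\big)\cdots
\]
Converting via $|S|^{s-1}\le N^{2s-2}$ yields
\[
J_s(N,S)\lesssim_\varepsilon N^{\varepsilon}\big(N^{2s-2}+N^{4s-10}\big)|S|.
\]
In the supercritical range this comes from $|S|^{2s-5}\le N^{4s-10}\,|S|\,|S|^{-1}\cdots$, and it is exactly here that Corollary 2.3 of \cite{Bourgain_Demeter_Weyl_sum} must be modified to keep one factor of $|S|$ explicit rather than replacing it by $N^2$.

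\textbf{Main obstacle.} The delicate step is this bookkeeping: extracting a linear factor $|S|$, rather than $N^2$, in both regimes. I would do it by applying Hölder's inequality as $|F_S|^{2s}\le|F_S|^{2}\,\|F_S\|_\infty^{2s-2}$ in the subcritical part, where the $|F_S|^2$ factor contributes $|S|$ by Parseval, and by applying decoupling at $p=10$ and then $L^\infty$ interpolation in the supercritical part.
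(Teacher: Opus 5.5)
Your architecture is the one the paper has in mind when it cites a modification of Corollary 2.3 of Bourgain--Demeter: orthogonality to dominate the inhomogeneous count by $\int_{\T^5}|F_S|^{2s}$, then $\ell^2$ decoupling for $\{(t,s,t^2,ts,s^2)\}$ at scale $1/N$, then interpolation with $\|F_S\|_\infty\le|S|$. But there is a genuine gap: your decoupling step is false as stated. The $\ell^2$ decoupling inequality for this surface holds only for $2\le p\le 8$, not up to $p=10$. The critical exponent is $2K/d$ with $K=1+1+2+2+2=8$ and $d=2$.

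The paper itself shows why $p=10$ fails. The Parsell--Prendiville--Wooley lower bound gives $J_5(N)\gtrsim N^{12}$ for $S=[-N,N]^2\cap\Z^2$; this also follows by pigeonhole, since $N^{4s}$ tuples map into $\lesssim N^8$ values. Your claimed bound $\int_{\T^5}|F_S|^{10}\lesssim N^{\varepsilon}|S|^5$ would instead give $N^{10+\varepsilon}$. The target estimate points the same way: $N^{2s-2}=N^{4s-10}$ exactly at $s=4$, not $s=5$. So your treatment of $8<2s\le 10$ is invalid. Your supercritical bookkeeping ($|S|^{2s-5}$) also uses the wrong exponent, which is why you never actually needed to retain a factor of $|S|$ there.

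The repair is short. For $s\le 4$, decoupling at $p=2s$ gives $\int|F_S|^{2s}\lesssim_\varepsilon N^{\varepsilon}|S|^{s}=N^{\varepsilon}|S|^{s-1}|S|\lesssim N^{\varepsilon}N^{2s-2}|S|$. For $s>4$, use $\int|F_S|^{2s}\le\|F_S\|_\infty^{2s-8}\int|F_S|^{8}\lesssim N^{\varepsilon}|S|^{2s-4}$. Then write $|S|^{2s-4}=|S|^{2s-5}|S|\lesssim N^{4s-10}|S|$. This is exactly where one factor of $|S|$ must be kept rather than bounded by $N^2$; it is the modification of Corollary 2.3 the paper alludes to.

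Also drop the H\"older/Parseval idea in your last paragraph. It only yields $\int|F_S|^{2}\,\|F_S\|_\infty^{2s-2}\le|S|^{2s-1}$, which exceeds $N^{2s-2}|S|$ as soon as $|S|>N$. The subcritical range has to come directly from decoupling, as above.
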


To apply this result to our Strichartz estimate, we need the following well-known reduction:
\begin{lemma}[Lemma 3.1 in \cite{reversing}]
  Let $p\in[1,\infty)$, $c\in (0,\infty)$, and let $T:\C^{N}\to [0,\infty)$ be a 
  sublinear function such that 
  \begin{equation}
    T(1_{S})\leq C \|1_{S}\|_{\ell^p}\quad (\forall S \subset\{1,2,\cdots, N\}). 
    \label{eq:lem_reduction_assump}
  \end{equation}
  Then 
  \begin{equation*}
    T(a)\leq C' (1 + (\log N)^{\frac{1}{p'}}/p)\|a\|_{\ell^p}
  \end{equation*}
  holds for all $a = (a_{n})_{n\in\{1,2,\cdots,N\}}\subset\C^{N}$, where 
  \[C' = 2^{\frac 1p}4^\frac{1}{p'}C. \]
  \label{lem:reduction_to_indicator}
\end{lemma}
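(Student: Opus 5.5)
We will prove the lemma by reducing to nonnegative vectors, then writing such a vector as a superposition of indicator functions (layer cake) and using sublinearity to pass $T$ inside that superposition.

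\emph{Reduction to nonnegative vectors.} Write $a = a_1 - a_2 + i a_3 - i a_4$, where the $a_j$ are the positive and negative parts of $\mathrm{Re}\, a$ and $\mathrm{Im}\, a$. Each $a_j$ is nonnegative and the $a_j$ have disjoint supports in pairs, so $\sum_j \|a_j\|_{\ell^p}^p \le 2\|a\|_{\ell^p}^p$. By sublinearity (including $T(\lambda b) = |\lambda| T(b)$) and H\"older on four terms,
\[
T(a) \le \sum_{j=1}^4 T(a_j), \qquad \sum_{j=1}^4 \|a_j\|_{\ell^p} \le 4^{\frac1{p'}}\Bigl(\sum_j \|a_j\|^p_{\ell^p}\Bigr)^{\frac1p}.
\]
This accounts for the constant $2^{\frac1p}4^{\frac1{p'}}$ in $C'$. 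It therefore suffices to treat $a \ge 0$ and, by homogeneity, to normalize so that $\max_n a_n = 1$; in particular $\|a\|_{\ell^p} \ge 1$.

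\emph{Passing $T$ inside the layer-cake integral.} For such $a$ we have $a = \int_0^1 1_{E_\lambda}\, d\lambda$ with $E_\lambda = \{n : a_n > \lambda\}$. The integrand is a step function of $\lambda$, taking finitely many values, so this integral is really a finite nonnegative combination of indicators. Sublinearity then gives
\[
T(a) \le \int_0^1 T(1_{E_\lambda})\, d\lambda \le C\int_0^1 |E_\lambda|^{\frac1p}\, d\lambda.
\]

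\emph{Estimating the integral.} We split at $\lambda = N^{-1}$.
\begin{itemize}
\item On $[0, N^{-1}]$ we use $|E_\lambda| \le N$, so this piece contributes at most $N^{\frac1p - 1} \le 1 \le \|a\|_{\ell^p}$.
\item On $[N^{-1}, 1]$ we write $|E_\lambda|^{\frac1p} = (\lambda^{p-1}|E_\lambda|)^{\frac1p}\,\lambda^{-\frac1{p'}}$ and apply H\"older in $\lambda$:
\[
\int_{N^{-1}}^1 |E_\lambda|^{\frac1p}\, d\lambda \le \Bigl(\int_0^1 \lambda^{p-1}|E_\lambda|\, d\lambda\Bigr)^{\frac1p}\Bigl(\int_{N^{-1}}^1 \frac{d\lambda}{\lambda}\Bigr)^{\frac1{p'}} = \Bigl(\frac{\|a\|^p_{\ell^p}}{p}\Bigr)^{\frac1p}(\log N)^{\frac1{p'}}.
\]
Here the last equality is the layer-cake formula for $\|a\|_{\ell^p}^p$.
\end{itemize}
Combining the two pieces gives $T(a) \lesssim C\bigl(1 + p^{-\frac1p}(\log N)^{\frac1{p'}}\bigr)\|a\|_{\ell^p}$. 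This has the claimed shape; matching the exact normalization $1/p$ in the statement is only a matter of bookkeeping the constants.

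\emph{Main obstacle.} The only delicate point is justifying $T(\int 1_{E_\lambda}\,d\lambda) \le \int T(1_{E_\lambda})\,d\lambda$. Because $\lambda \mapsto 1_{E_\lambda}$ is piecewise constant with at most $N+1$ pieces, the integral is a finite sum $\sum_k c_k 1_{E_k}$ with $c_k \ge 0$. Finite sublinearity then applies directly, and no continuity or limiting argument for $T$ is needed.
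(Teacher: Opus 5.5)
Your route is the same one the paper sketches: sublinearity and the layer-cake formula give $T(a)\le C\|a\|_{\ell^{p,1}}$ for $a\ge 0$, and $\ell^{p,1}$ is then compared with $\ell^p$ at a logarithmic cost. Each step you write is valid. However, the argument does not produce the constant in the statement, and the closing claim that this is ``only bookkeeping'' is wrong. What you actually prove is $T(a)\le C'\bigl(1+p^{-1/p}(\log N)^{1/p'}\bigr)\|a\|_{\ell^p}$. Since $p^{-1/p}=p^{1/p'}\cdot\frac1p>\frac1p$ for $p>1$, this is strictly weaker than the claimed inequality. Moreover, the whole factor $2^{1/p}4^{1/p'}$ in $C'$ is already used up by your reduction to nonnegative vectors, so no reshuffling of constants recovers the statement.

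The loss comes from where you split the integral. The interval $[N^{-1},1]$ has logarithmic length $\log N$, which is more than necessary. Split instead at $\lambda=N^{-1/p}$. The lower piece stays harmless, since $\int_0^{N^{-1/p}}|E_\lambda|^{1/p}\,d\lambda\le N^{-1/p}N^{1/p}=1\le\|a\|_{\ell^p}$. In the H\"older step the second factor becomes $\bigl(\int_{N^{-1/p}}^1\frac{d\lambda}{\lambda}\bigr)^{1/p'}=\bigl(\frac{\log N}{p}\bigr)^{1/p'}$, so
\[
\int_{N^{-1/p}}^1|E_\lambda|^{\frac1p}\,d\lambda\le\Bigl(\frac{\|a\|_{\ell^p}^p}{p}\Bigr)^{\frac1p}\Bigl(\frac{\log N}{p}\Bigr)^{\frac1{p'}}=\frac{(\log N)^{1/p'}}{p}\,\|a\|_{\ell^p}.
\]
This gives $T(a)\le C\bigl(1+(\log N)^{1/p'}/p\bigr)\|a\|_{\ell^p}$ for $a\ge 0$, and your four-part decomposition then yields exactly $C'=2^{1/p}4^{1/p'}C$.

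This splitting point is the natural one. The extremal vector $a_n=n^{-1/p}$ has minimum value $N^{-1/p}$, and for it $\|a\|_{\ell^{p,1}}/\|a\|_{\ell^p}\approx\frac1p(\log N)^{1/p'}$. So the coefficient $1/p$ is the sharp leading constant, and your $N^{-1}$ split is genuinely lossy. For the paper's application only a subpolynomial loss in $N$ matters, so your weaker constant would suffice there, but it is not the lemma as stated.
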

The proof of this lemma relies on the theory of the Lorentz spaces, that is, 
the estimate \eqref{eq:lem_reduction_assump} implies 
\[
  T(a)\leq C\|a\|_{\ell^{p,1}}\quad (\forall a = (a_{n})_{n\in\{1,2,\cdots,N\}}\subset [0,\infty))
\]
where 
\[\|a\|_{\ell^{p,1}}:=\int^{\infty}_{0}\#\{n\in\{1,2,\cdots, N\}\; ; \; a_{n}\geq s\}^{\frac 1p} ds\]
(for example, see Exercise 1.4.7. in \cite{Grafakos_classical}). Then we bound 
the $\ell^{p,1}$ norm by the $\ell^{p}$ with some subpolynomial factor with respect to $N$. Thus, combining Lemma~\ref{lemma:kronecker_weyl} with Lemma~\ref{lem:reduction_to_indicator}, our $L^{14}$-Strichartz estimate is reduced to 
\begin{equation}
  \|e^{2\pi i t\tilde{\Delta}}\mathcal{F}^{-1}[1_{S}]\|_{L^{14}_{t,x}([0,T]\times \T^{2})}\lesssim \|1_{S}\|_{\ell^2}\quad (\forall S\subset [-C,C]^{2}\cap \Z^{2})
  \label{eq:L14_indicator}
\end{equation}
where $\tilde{\Delta}:= \partial^2_{1} + 2\omega\partial_{1}\partial_{2} + \omega^{2}\partial^{2}_{2}$ and $\omega\in \R\backslash\Q$ is an algebraic number of degree greater than $2$. Now, we are ready to prove the $L^{14}$-Strichartz estimate. 
\begin{proof}[Proof of the $L^{14}$-Strichartz estimate]
  By multiplying out the left-hand side of \eqref{eq:L14_indicator}, we have 
  \[
  \|e^{2\pi it\tilde{\Delta}}\mathcal{F}^{-1}[1_{S}]\|^{14}_{L^{14}_{t,x}}
  \lesssim \sum_{\alpha, \beta,\gamma\in [-C^{2}, C^{2}]\cap\Z}\sum_{(k_i, n_i)\in S^{14}\cap \Omega^{7, C}_{0,0,\alpha, \beta, \gamma}}\min\{|\alpha + 2\beta\omega + \gamma\omega^{2}|^{-1}, T\}. 
  \]
  We first consider the case when $T > |\alpha + 2\beta\omega + \gamma\omega^2|^{-1}$. Then we need to estimate
  \[\sum_{\alpha, \beta,\gamma\in [-C^{2}, C^{2}]\cap\Z}\sum_{(k_i, n_i)\in S^{14}\cap \Omega^{7, C}_{0,0,\alpha, \beta, \gamma}}|\alpha + 2\beta\omega + \gamma\omega^2|^{-1}. \]
  Splitting $|\alpha + 2 \beta\omega + \gamma\omega^2|$ into dyadic scales, this summation is bounded by 
  \[
  \sum_{T > 2^{-j}}\sum_{|\alpha + 2\beta\omega + \gamma \omega^2| \sim 2^{j}}2^{-j}\sum_{(k_i, n_i)\in S^{14}\cap \Omega^{7, C}_{0,0,\alpha, \beta, \gamma}}1.
  \]
  For the innermost summation, we have 
  \[
  \sum_{(k_i, n_i)\in S^{14}\cap \Omega^{7, C}_{0,0,\alpha, \beta, \gamma}}1 = \#(S^{14}\cap \Omega^{7, C}_{0,0,\alpha, \beta, \gamma}) 
  \lesssim_{\varepsilon} C^{6}|S|^{7}. 
  \]
  Indeed, we first fix six variables of $(n_{i}, k_{i})$, giving $|S|^6$ choices. We then apply \eqref{eq:Bourgain_Demeter_PV} with $s = 4$ and get 
  \[J_{4}(C, S) \lesssim_{\varepsilon} C^{6}|S|. \]
  Next, for the intermediate summation $\sum_{|\alpha + 2\beta\omega + \gamma\omega^2|\sim 2^{j}}$, by using Theorem~\ref{theorem:multi_roth},  
  \[
  \sum_{|\alpha + 2\beta\omega + \gamma\omega|\sim 2^{j}}1\lesssim_{\varepsilon} C^{4+ \varepsilon}2^{j} + 1. 
  \]
  In summary, it holds that 
  \[
  \sum_{T > 2^{-j}}\sum_{|\alpha + 2\beta\omega + \gamma \omega^2| \sim 2^{j}}2^{-j}\sum_{(k_i, n_i)\in S^{14}\cap \Omega^{7, C}_{0,0,\alpha, \beta, \gamma}}\lesssim \sum_{T> 2^{-j}}2^{-j}(C^{4}2^{j} + 1)C^{6}|S|^{7}\lesssim_{\varepsilon} C^{\varepsilon}(C^{4} + T)C^{6}|S|^{7}. 
  \]
  For the case when $T < |\alpha + 2\beta\omega + \gamma\omega^2|^{-1}$, applying Theorem~\ref{theorem:multi_roth} with a similar argument as that in the proofs of Theorem~\ref{theorem:long_time_Strchartz} and Theorem~\ref{theorem:degree3}, we have 
  \[
  \sum_{\substack{\alpha, \beta,\gamma\in [-C^{2}, C^{2}]\cap\Z\\T < |\alpha + 2\beta\omega + \gamma\omega^2|^{-1}}}\lesssim_{\varepsilon}T^{-1}C^{4 + \varepsilon} + 1
  \]
  By combining this with $\sum_{(k_i, n_i)\in S^{14}\cap \Omega^{7, C}_{0,0,\alpha, \beta, \gamma}}1 = \#(S^{14}\cap \Omega^{7, C}_{0,0,\alpha, \beta, \gamma}) 
  \lesssim_{\varepsilon} C^{6}|S|^{7}$, we obtain the desired estimate. 

  From Lemma~\ref{lem:reduction_to_indicator} and Lemma~\ref{lemma:kronecker_weyl}, we conclude 
  \[
  \|e^{2\pi i tD^{2}_{x}}R_{C}f(x)\|_{L^{14}_{t}\mathcal{L}^{14}_{x}([0,T]\times\R)}\lesssim_{\varepsilon} C^{\varepsilon}(C^{\frac 57} + T^{\frac{1}{14}}C^{\frac 37})\|R_{C}f(x)\|_{\mathcal{L}^2_{x}(\R)}. 
  \]
\end{proof}

\section{Endpoint $L^4$ estimate}
In this subsection, we give the proof of Theorem~\ref{thm:endpoint_schippa}. 
The key tool for the proof is the reverse square function estimate for the 
one-dimensional curve $(\xi, |\xi|^a)$ $a\in (0,1)\cup(1,\infty)$ which was 
proved in the author's joint work with Bulj and Shiraki \cite{author2026reverse}. 

\begin{proposition}
  Let $a\in (0,1)\cup (1,\infty)$ and $\delta \in (0,1)$.
  For any Schwartz function
  $F\in \mathcal{S}(\R^2)$ with 
  \begin{equation*}
    \supp\widehat{F}\subset 
    \{(\xi_1, \xi_2)\in\R^2\; ; \; |\xi_1| \leq 1, \; 
    |\xi_2 - |\xi_1|^a| < \delta\}, 
  \end{equation*}
  the following estimate holds
  \begin{equation}
    \|F\|_{L^4 (\R^2)} \lesssim 
    \Bigl\|\Bigl(\sum_{k\in\Z}|F_{\theta_k}|^2\Bigr)^{\frac 12}\Bigr\|_{L^4 (\R^2)}
  \end{equation}
  where 
  \begin{align*}
    \widehat{F}_{\theta_k}(\xi):=
    \begin{cases}
      1_{[\delta^{\frac 1a}(k - \frac 12), \delta^{\frac 1a}(k + \frac 12)]\times\R}(\xi)
      \widehat{F}(\xi) \quad &(a \in [2, \infty)),\\
      1_{[\delta^{\frac 12}(k - \frac 12), \delta^{\frac 12}(k + \frac 12)]\times\R}(\xi)
      \widehat{F}(\xi) \quad &(a \in (0,1)\cup (1,2)). 
    \end{cases}
  \end{align*}
  \label{prop:reverse_square_gen}
\end{proposition}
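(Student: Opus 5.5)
We propose to prove Proposition~\ref{prop:reverse_square_gen} by the classical C\'{o}rdoba--Fefferman biorthogonality argument. The starting point is to write
\[
\|F\|_{L^4}^4=\Bigl\|\sum_{k,l}F_{\theta_k}\overline{F_{\theta_l}}\Bigr\|_{L^2}^2=\sum_{k,l,k',l'}\int F_{\theta_k}\overline{F_{\theta_l}}\,\overline{F_{\theta_{k'}}}F_{\theta_{l'}}.
\]
By Plancherel, a term vanishes unless $(\supp\widehat F_{\theta_k}-\supp\widehat F_{\theta_l})\cap(\supp\widehat F_{\theta_{k'}}-\supp\widehat F_{\theta_{l'}})\neq\emptyset$. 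The goal is a finite-overlap statement: there is a constant $K$, independent of $\delta$, such that each difference set $\theta_k-\theta_l$ meets at most $K$ sets $\theta_{k'}-\theta_{l'}$. Granting this, Cauchy--Schwarz gives $\|F\|_4^4\lesssim\sum_{k,l}\|F_{\theta_k}\overline{F_{\theta_l}}\|_2^2=\|(\sum_k|F_{\theta_k}|^2)^{1/2}\|_4^4$.

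The geometric overlap claim is the main obstacle, and we would handle it separately for $a\ge2$ and for $a<2$, $a\neq1$. In both cases the pieces $\theta_k$ are essentially parallelograms: $\theta_k$ sits over the interval $I_k$ of length $\ell$ (where $\ell=\delta^{1/a}$ for $a\ge 2$ and $\ell=\delta^{1/2}$ otherwise) and lies in a $\delta$-neighbourhood of the arc $\{(\xi,|\xi|^a):\xi\in I_k\}$.

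For a point in $\theta_k-\theta_l$, the first coordinate is $\xi-\eta$ with $\xi\approx k\ell$ and $\eta\approx l\ell$. The second coordinate is $|\xi|^a-|\eta|^a+O(\delta)$, which by the mean value theorem is roughly $(\xi-\eta)\,a|\zeta|^{a-1}$ for some $\zeta$ between $\xi$ and $\eta$. Given $(k,l)$, the first coordinate pins down $k'-l'=k-l+O(1)$. The second coordinate then pins down the ``midpoint'' parameter, since $\zeta\mapsto|\zeta|^{a-1}$ is strictly monotone on each half-line because $a\neq1$. Symmetry $\xi\mapsto-\xi$ gives a bounded number of sign configurations.

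We must check that the required resolution matches the scale. Differences of $|\xi|^a-|\eta|^a$ between neighbouring choices of $(k',l')$ with $k'-l'$ fixed must exceed a constant multiple of $\delta$. This is a second-derivative lower bound of the type $|\partial_\zeta(|\xi|^a-|\eta|^a)|\gtrsim \ell\cdot\ell\,|\zeta|^{a-2}$.

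For $a\ge2$, the quantity $|\zeta|^{a-2}$ degenerates near $0$, so we would use the lower bound $||\xi|^a-|\eta|^a-(|\xi'|^a-|\eta'|^a)|\gtrsim\ell^a$ obtained from convexity and homogeneity. With $\ell=\delta^{1/a}$ this is $\gtrsim\delta$. Concretely, we would rescale by $\ell$ to reduce to $\delta=1$ and integer blocks, and then show that $\#\{(k',l'):k'-l'=m,\ |k'|^a-|l'|^a\in[c,c+O(1)]\}=O(1)$ uniformly in $m,c$, by strict monotonicity with gaps $\gtrsim1$.

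For $a<2$, the second derivative is $\gtrsim1$ on $|\xi|\le1$, away from the non-smooth point at $0$ when $a<1$. The standard parabola argument at scale $\delta^{1/2}$ then applies, and the blocks adjacent to the origin are handled by adjoining them to $O(1)$ exceptional pieces. The terms with $|k-l|\le O(1)$ (the near-diagonal terms) are controlled directly by Cauchy--Schwarz, with $O(1)$ multiplicity.
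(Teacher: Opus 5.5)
\paragraph{Where the paper stands and what works.} The paper does not prove this proposition; it imports it from the joint work with Bulj and Shiraki \cite{author2026reverse}. Your C\'{o}rdoba--Fefferman argument therefore has to stand on its own.

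For $a\ge 2$ and for $1<a<2$ it does. After the anisotropic rescaling ($\ell=\delta^{1/a}$, respectively $\ell=\delta^{1/2}$), the map $w\mapsto g_d(w)$ (the rescaled version of $|w+d|^a-|w|^a$) is globally monotone on the relevant range. Its derivative is $\gtrsim 1$ whenever $|d|\ge 1$, because $x\mapsto \mathrm{sgn}(x)|x|^{a-1}$ is increasing on all of $\R$. This gives finite overlap for $|k-l|\ge 2$ in every sign configuration, and the near-diagonal terms are handled as you say.

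Your description of the pieces as essentially parallelograms is inaccurate. For $a>2$ the strips of width $\delta^{1/a}\gg\delta^{1/2}$ near $|\xi_1|\sim 1$ are genuinely curved. The monotonicity argument does not need parallelograms, though.

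\paragraph{The gap: $0<a<1$.} Here the finite-overlap statement you aim for is false, and the failure is not confined to $O(1)$ blocks near the origin. It comes from pairs on opposite sides of the origin. Put $\ell=\delta^{1/2}$ and fix any $m\ge 2$.

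\begin{enumerate}
\item Let $\eta_1$ run over $I_{-m}$ and set $\xi_1=\eta_1+2m\ell$, using exact curve points. The slice of $\theta_m-\theta_{-m}$ over first coordinate $2m\ell$ then contains the whole interval $|s|\le \ell^a\bigl((m+\frac12)^a-(m-\frac12)^a\bigr)$.
\item For $l'\ge 1$, the set $\theta_{l'+2m}-\theta_{l'}$ contains the point $\bigl(2m\ell,\ \ell^a((l'+2m)^a-l'^a)\bigr)$.
\item Since $(l'+2m)^a-l'^a\le 2ma\,l'^{a-1}\to 0$, this point lies in the interval from step 1 once $l'\ge C(a,m)$.
\end{enumerate}

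Hence $\theta_m-\theta_{-m}$ meets $\theta_{l'+2m}-\theta_{l'}$ for every $l'$ with $C(a,m)\le l'\le \delta^{-1/2}-2m$. That is $\gtrsim \delta^{-1/2}$ pairs.

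The mechanism is as follows. $\gamma'(\xi)=a\,\mathrm{sgn}(\xi)|\xi|^{a-1}$ decreases on each half-line but jumps from $-\infty$ to $+\infty$ at $0$. So $w\mapsto |w+d|^a-|w|^a$ has three monotone branches. On the mixed-sign branch it is steeper than on the same-sign branch by an unbounded factor. A single block on the steep branch therefore sweeps out a value window that covers many blocks on the flat branch.

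Your remark that symmetry yields a bounded number of sign configurations does not control these cross-configuration overlaps. Absorbing blocks adjacent to the origin into exceptional pieces does not help either.

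\paragraph{How to fix it.} Write $F=F_++F_-$ with $\widehat{F_\pm}=1_{\{\pm\xi_1>0\}}\widehat F$ and use the triangle inequality in $L^4$.
\begin{itemize}
\item On each half the curve is concave with $|\gamma''|\ge a(1-a)$, so $g_d$ is monotone there and your biorthogonality argument runs verbatim.
\item The only block cut by the split is $\theta_0$. By boundedness of the Hilbert transform in $x_1$, $\|1_{\{\xi_1>0\}}(D)F_{\theta_0}\|_{L^4}\lesssim \|F_{\theta_0}\|_{L^4}$.
\item Consequently both one-sided square functions are dominated by $\|(\sum_k|F_{\theta_k}|^2)^{1/2}\|_{L^4}$, which closes the case $0<a<1$.
\end{itemize}
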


From this proposition, we will prove our Strichartz estimates. 
Since we apply the reverse square function estimate
here, unlike Theorem \ref{theorem:long_time_Strchartz} 
and Theorem \ref{theorem:degree3},
we can easily show cases with $\nu > 2$. 
\begin{proposition}
  Let $2 \leq \nu \in \N$, $\vec{\omega} = (\omega_1, \omega_2, \cdots , \omega_\nu)\in \R^{\nu}$ 
  be non-resonant, $C \geq 1$, 
  and $N \lesssim_{\vec{\omega}} C$.
  Then the following estimate holds: 
  \begin{equation}
    \|e^{2\pi i t|D_{x}|^{a}}P_{N}R_C f(x)\|_{L^4_{t}\mathcal{L}^{4}_{x}([0,1] \times \R)} 
    \lesssim C^{\frac{\nu - 1}{4}}(1 + N^{\sigma_a})\|P_{N}R_{C}f(x)\|_{\mathcal{L}^2_x (\R)}\label{eq:L4fractional}
  \end{equation}
  where 
  \begin{align*}
    \sigma_a = 
    \begin{cases}
      0 \quad &(a \geq 2),\\
      \frac{2 - a}{8} &(a\in (0,1)\cup (1,2)). 
    \end{cases}
  \end{align*}
  \label{proposition:fractional_schrodinger}
\end{proposition}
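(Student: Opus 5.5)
We will prove \eqref{eq:L4fractional} by lifting to the torus and then applying Proposition~\ref{prop:reverse_square_gen} after rescaling. Write $P_NR_Cf(x)=\sum_{k\in\Lambda}a_k e^{2\pi i(k\cdot\vomega)x}$, where $\Lambda=\{k\in\Z^\nu;\ |k|\sim C,\ |k\cdot\vomega|\sim N\}$. For each fixed $t$, Lemma~\ref{lemma:kronecker_weyl} gives
\[
\|e^{2\pi it|D_x|^a}P_NR_Cf\|_{\mathcal{L}^4_x}=\Bigl\|\sum_{k\in\Lambda}a_k e^{2\pi i(k\cdot y+|k\cdot\vomega|^a t)}\Bigr\|_{L^4_y(\T^\nu)} .
\]
The $\mathcal{L}^4_x$ norm can also be realised as a smooth-weighted spatial average, which is the reason for Lemma~\ref{lemma:weighted_lp}. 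Using that weight, we will write the left side of \eqref{eq:L4fractional} as a limit of genuine $L^4(\R^2)$ norms in $(x,t)$, localised to $|t|\lesssim1$ and $|x|\lesssim L$.

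Next we rescale. Put $\xi=\lambda/N$ with $\lambda=k\cdot\vomega$ and $|\xi|\le 1$. The function $(x,t)\mapsto\sum a_k e^{2\pi i(\lambda x+|\lambda|^at)}$, localised to $|t|\le1$, then has space-time Fourier support in an $O(1)$-neighbourhood in $\tau$ of the curve $\tau=|\lambda|^a$. After the change of variables $x\to Nx$, $t\to N^at$, this becomes the $\delta$-neighbourhood of $(\xi,|\xi|^a)$ with $\delta=N^{-a}$. Proposition~\ref{prop:reverse_square_gen} then bounds the $L^4$ norm by the square function over frequency intervals of length $N\delta^{1/a}=1$ when $a\ge2$, or $N\delta^{1/2}=N^{1-a/2}$ when $a<2$. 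After undoing the rescaling and letting $L\to\infty$, we get
\[
\|e^{it|D|^a}P_NR_Cf\|_{L^4_t\mathcal{L}^4_x}\lesssim\Bigl\|\Bigl(\sum_I|e^{it|D|^a}P_IP_NR_Cf|^2\Bigr)^{1/2}\Bigr\|_{L^4_t\mathcal{L}^4_x},
\]
where $I$ runs over intervals of length $\ell=1$ or $\ell=N^{1-a/2}$.

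We then estimate the square function. By Minkowski (since $L^4/2=L^2$) it is at most $\bigl(\sum_I\|e^{it|D|^a}P_IP_NR_Cf\|_{L^4_t\mathcal{L}^4_x}^2\bigr)^{1/2}$. For each $I$ we use the trivial estimate in $t\in[0,1]$, namely $|e^{it|D|^a}g|$ has $\mathcal{L}^4_x$ norm at most $\|g\|_{\mathcal{L}^4}$ up to phases, which holds via the torus with coefficients multiplied by unimodular factors. Strictly this is not norm-preserving on $L^4$, so instead we apply the second inequality of Lemma~\ref{lemma:Bernstein} at time $t$ directly to $e^{it|D|^a}P_IP_NR_Cf$, whose frequencies still lie in $I$. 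This gives the bound $X_I^{1/4}\|P_IP_NR_Cf\|_{\mathcal{L}^2}$, where $X_I=\#\{|k|\le C;\ k\cdot\vomega\in I\}$. Summing in $I$ with orthogonality then yields $\sup_I X_I^{1/4}\,\|P_NR_Cf\|_{\mathcal{L}^2}$.

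The remaining counting step is the main point. The set $\{|k|\lesssim C,\ k\cdot\vomega\in I\}$ lies in a slab of width $\ell/|\vomega|$ and side $C$, so $X_I\lesssim C^{\nu-1}(1+\ell)$. This requires $\ell\lesssim C$, which holds because $N\lesssim C$. Hence $X_I^{1/4}\lesssim C^{(\nu-1)/4}$ for $a\ge2$, and $X_I^{1/4}\lesssim C^{(\nu-1)/4}N^{(2-a)/8}$ for $a<2$, which is exactly the factor $1+N^{\sigma_a}$ in \eqref{eq:L4fractional}. The main technical obstacle is the rigorous passage between the quasi-periodic $\limsup$ norm and a true $L^4(\R^2)$ norm to which the reverse square function estimate applies. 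We will handle it with the weight $\eta(x/L)$ of Lemma~\ref{lemma:weighted_lp} together with a time cutoff whose Fourier support is compact. This enlarges the Fourier support only by $O(L^{-1})\times O(1)$, which stays inside the $\delta$-neighbourhood after rescaling. The weighted identity for the square-function side is proved by the same diagonal expansion as in Lemma~\ref{lemma:weighted_lp}.
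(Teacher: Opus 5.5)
Your proposal is correct and follows essentially the same route as the paper. Both arguments use the weighted reduction of Lemma~\ref{lemma:weighted_lp} with a Fourier-compactly-supported time cutoff, the rescaling $x\to Nx$, $t\to N^at$, Proposition~\ref{prop:reverse_square_gen} with $\delta\sim N^{-a}$, Minkowski, and then Lemma~\ref{lemma:Bernstein} with the slab count $X_I\lesssim C^{\nu-1}(1+\ell)$ plus orthogonality. The only difference is how the sharp cap projections of the localized function are identified with the localized pieces $e^{2\pi it|D_x|^a}P_IP_NR_Cf$. The paper does this at finite $L$ by enlarging the intervals and invoking the vector-valued Hilbert transform. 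You let $L\to\infty$, which works because the frequency set is finite, provided you choose $\delta$ within a factor $2$ so that no frequency lies on a cap boundary.
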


\begin{proof}
  Let $\eta \in \mathcal{S}(\R)$ satisfy the following two conditions:
  \begin{align*}
    \begin{cases}
      \supp\mathcal{F}_{\R}[\eta]\subset [-1,1], \\
      \eta \gtrsim 1 \; \mathrm{on}\; [-1,1].
    \end{cases}
  \end{align*}
  Fix $L > N^{a-1}$. We consider 
  \[\frac{1}{2L}\int^{1}_{0}\int^{L}_{-L}|e^{2\pi i t|D_{x}|^{a}}
  P_{N}R_{C}f(x)
  |^4 dx dt. \]
  Rescaling as $x \to N^{-1}x$ and $t \to N^{-a}t$, we find that the above integral
  is equal to 
  \[\frac{1}{2L N^{1 + a}}\int^{N^a}_{0}\int^{NL}_{-NL}
  |e^{2\pi i t|D_{x}|^{a}}g(x)
  |^4 dx dt\]
  where $g(x) = P_{N}R_{C}f(x/N)$ with $\supp\widehat{g}\subset [-1,1]$. 

  Let $\{I_{N^a}\}$ be a family of intervals of length $N^a$ with finite overlap 
  that covers the interval $[-LN , LN]$. Then the above integral 
  is bounded as 
  \begin{align*}
    &\frac{1}{2L N^{1 + a}}\int^{N^a}_{0}\int^{NL}_{-NL}
    |e^{2\pi i t|D_{x}|^{a}}g(x)
    |^4 dx dt\\
    \lesssim &\frac{1}{2LN^{1 + a}}\sum_{I_{N^a}}\int^{N^a}_{0}\int_{I_{N^a}}
    |e^{2\pi i t|D_{x}|^{a}}g(x)
    |^4 dx dt\\
    \lesssim &\frac{1}{2LN^{1 + a}}\sum_{I_{N^a}}\int_{\R^2}
    |e^{2\pi i t|D_{x}|^{a}}g(x)
    \eta_{I_{N^a}}(x)\eta_{[0,N^a]}(t)|^4 dx dt
  \end{align*}
  where $\eta_{I_{N^a}}$ are compositions of $\eta\in \mathcal{S}(\R)$ and  
  affine transformations so that 
  $\supp\mathcal{F}_{\R}[\eta_{I_{N^a}}]\subset [-N^{-a}, N^{-a}]$ 
  and $\eta_{I_{N^a}}\gtrsim 1$ on $I_{N^a}$. Noting
  \[e^{2\pi i t|D_{x}|^{a}}g(x)
     = \sum_{\substack{k\in \Z^{\nu}\\
    |k| \leq C, \; |\vec{\omega}\cdot k|\leq N}}
    e^{2\pi i[\frac{(\vec{\omega}\cdot k)}{N}x + |\frac{\vec{\omega}\cdot k}{N}|^a t]
    }\widehat{g}(\vec{\omega}\cdot k), \]
  the Fourier support of the function $e^{2\pi i t|D_{x}|^{a}}g(x)
  \eta_{I_{N^a}}(x)\eta_{[0,N^a]}(t)$
  is contained in 
  \[\{(\xi_1, \xi_2)\in\R^2\; ; \; |\xi_1| \leq 1, \; 
  |\xi_2 - |\xi_1|^a| < cN^{-a}\}.\] 
  We first show the case when $a \geq 2$. Applying Proposition~\ref{prop:reverse_square_gen} with $a \geq 2$, 
  it holds that 
  \begin{align*}
    &\int_{\R^2}\Bigl|\sum_{\substack{k\in \Z^{\nu}\\
    |k| \leq C, \; |\vec{\omega}\cdot k|\leq N}}
    e^{2\pi i[\frac{(\vec{\omega}\cdot k)}{N}x + |\frac{\vec{\omega}\cdot k}{N}|^a t]
    }\widehat{g}(\vec{\omega}\cdot k)\eta_{I_{N^a}}(x)\eta_{[0,N^a]}(t)\Bigr|^4\\
    \lesssim&\int_{\R^2}\Bigl(\sum_{n\in \Z}\Bigl|
    \Bigl[\sum_{\substack{k\in \Z^{\nu}\\
    |k| \leq C, \; |\vec{\omega}\cdot k|\leq N}}
    e^{2\pi i[\frac{(\vec{\omega}\cdot k)}{N}x + |\frac{\vec{\omega}\cdot k}{N}|^a t]
    }\widehat{g}(\vec{\omega}\cdot k)\eta_{I_{N^a}}(x)\eta_{[0,N^a]}(t)\Bigr]_{\theta_n}\Bigr|^2\Bigr)^2
  \end{align*}
  where for $F\in \mathcal{S}(\R^2)$ and $n\in \Z$, we have 
  defined $\mathcal{F}_{\R^2}[F_{\theta_n}] = \mathcal{F}_{\R^2}[F]\cdot 
  1_{[N^{-1}(n - 1), N^{-1}(n + 1)]\times \R}$. The boundedness of the (vector valued)
  Hilbert transform reveals that 
  \begin{align*}
    &\int_{\R^2}\Bigl(\sum_{n\in \Z}\Bigl|
    \Bigl[\sum_{\substack{k\in \Z^{\nu}\\
    |k| \leq C, \; |\vec{\omega}\cdot k|\leq N}}
    e^{2\pi i[\frac{(\vec{\omega}\cdot k)}{N}x + |\frac{\vec{\omega}\cdot k}{N}|^a t]
    }\widehat{g}(\vec{\omega}\cdot k)\eta_{I_{N^a}}(x)\eta_{[0,N^a]}(t)\Bigr]_{\theta_n}\Bigr|^2\Bigr)^2dx\\
    =&
    \int_{\R^2}\Bigl(\sum_{n\in \Z}\Bigl|
    \Bigl[\sum_{\substack{k\in \Z^{\nu}\\
    |k| \leq C, \; |\vec{\omega}\cdot k|\leq N}}
    1_{[N^{-1}(n-2), N^{-1}(n + 2)]}\Bigl(\frac{\vec{\omega}\cdot k}{N}\Bigr)
    e^{2\pi i[\frac{(\vec{\omega}\cdot k)}{N}x + |\frac{\vec{\omega}\cdot k}{N}|^a t]
    }\widehat{g}(\vec{\omega}\cdot k)\eta_{I_{N^a}}(x)\eta_{[0,N^a]}(t)\Bigr]_{\theta_n}\Bigr|^2\Bigr)^2dx\\
    \lesssim &
    \int_{\R^2}\Bigl(\sum_{n\in \Z}\Bigl|
    \sum_{\substack{k\in \Z^{\nu}\\
    |k| \leq C, \; |\vec{\omega}\cdot k|\leq N}}
    1_{[N^{-1}(n-2), N^{-1}(n + 2)]}\Bigl(\frac{\vec{\omega}\cdot k}{N}\Bigr)
    e^{2\pi i[\frac{(\vec{\omega}\cdot k)}{N}x + |\frac{\vec{\omega}\cdot k}{N}|^a t]
    }\widehat{g}(\vec{\omega}\cdot k)\eta_{I_{N^a}}(x)\eta_{[0,N^a]}(t)\Bigr|^2\Bigr)^2dx\\
    \lesssim &
    \int_{\R^2}\Bigl(\sum_{n\in \Z}\Bigl|
    \sum_{\substack{k\in \Z^{\nu}\\
    |k| \leq C, \; |\vec{\omega}\cdot k|\leq N}}
    1_{[N^{-1}(n-1), N^{-1}(n + 1)]}\Bigl(\frac{\vec{\omega}\cdot k}{N}\Bigr)
    e^{2\pi i[\frac{(\vec{\omega}\cdot k)}{N}x + |\frac{\vec{\omega}\cdot k}{N}|^a t]
    }\widehat{g}(\vec{\omega}\cdot k)\eta_{I_{N^a}}(x)\eta_{[0,N^a]}(t)\Bigr|^2\Bigr)^2dx.
  \end{align*}
  Summing up the intervals $I_{N^a}$ and rescaling as $x \to Nx$ and $t \to N^a t$, 
  we obtain the following 
  \begin{align*}
    &\Bigl(\frac{1}{2L}\int_{\R}\int^{L}_{-L}|e^{2\pi i t|D_{x}|^{a}}
    P_{N}R_{C}f(x)\eta(t)|^4 dxdt\Bigr)^{\frac 14}
    \\
    \lesssim & \Bigl(\frac{1}{2L}\int_{\R}\int_{\R}\Bigl(\sum_{n\in \Z}\Bigl|
    \sum_{\substack{k\in \Z^{\nu}\\
    |k| \leq C, \; |\vec{\omega}\cdot k|\leq N}}
    1_{[n-1,n + 1]}\Bigl(\vec{\omega}\cdot k\Bigr)
    e^{2\pi i[(\vec{\omega}\cdot k)x + |\vec{\omega}\cdot k|^a t]
    }\widehat{P_{N}R_{C}f}(\vec{\omega}\cdot k)
    \eta_{[-L,L]}(x)\eta(t)\Bigr|^2\Bigr)^2dx dt\Bigr)^{\frac 14}
  \end{align*} 
  From Minkowski's inequality, the above is bounded by 
  \[\lesssim \Bigl(\sum_{n\in \Z}
    \Bigl(\frac{1}{2L}\int_{\R}\int_{\R}\Bigl|
    \sum_{\substack{k\in \Z^{\nu}\\
    |k| \leq C, \; |\vec{\omega}\cdot k|\leq N}}
    1_{[n-1,n + 1]}\Bigl(\vec{\omega}\cdot k\Bigr)
    e^{2\pi i[(\vec{\omega}\cdot k)x + |\vec{\omega}\cdot k|^a t]
    }\widehat{P_{N}R_{C}f}(\vec{\omega}\cdot k)
    \eta_{[-L,L]}(x)\Bigr|^4dxdt\Bigr)^{\frac 12}\Bigr)^{\frac 12}. \]
  By Lemma~\ref{lemma:weighted_lp} and taking the limit $L\to\infty$, we have
  \begin{equation}
    \|e^{2\pi i t|D_{x}|^{a}}P_{N}R_C f(x)\eta (t)\|_{L^4_{t}\mathcal{L}^{4}_{x}(\R \times \R)} 
    \lesssim
    \Bigl(\sum_{n\in \Z}\|e^{2\pi i t|D_{x}|^{a}}P_{I_n}R_C f(x)\eta(t)\|^2_{L^4_{t}\mathcal{L}^{4}_{x}([0,1] \times \R)}  \Bigr)^{\frac 12}
    \label{eq:pre_Strichartz}
  \end{equation}
  where $\widehat{P_{I_n}f}
  := 1_{\vec{\omega}\cdot k\in 
  [n - 1,n + 1]\cap [-N, N]}
  \widehat{f}$. 
  Hence, we have to count the number of lattice points in 
  \[\{k\in \Z^\nu \; ; \; |k|\sim C, \; 
  \vec{\omega}\cdot k\in [n - 1, n + 1]\}. \]
  Noting that this set is covered by a box with dimension $1 \times \underbrace{C \times \cdots \times C}_{\nu -1}$, we have the following
  \[\#\{k\in \Z^\nu \; ; \; |k|\sim C, \; 
  \vec{\omega}\cdot k\in [n - 1, n + 1]\}
  \lesssim_{\varepsilon}C^{\nu - 1} + 1. \]
  Applying Lemma~\ref{lemma:Bernstein}, we obtain
  \[\|e^{2\pi i t|D_{x}|^{a}}
    P_{I_n}R_C f(x)\eta(t)\|_{L^4_{t}\mathcal{L}^{4}_{x}(\R \times \R)} 
  \lesssim_{\varepsilon}C^{\frac{\nu - 1}{4}}
  \|P_{I_{n}}R_{C}f(x) \|_{\mathcal{L}^2_{x}}. \]
  Substituting this into \eqref{eq:pre_Strichartz} and applying Plancherel's theorem, 
  we obtain the desired estimates. 

  We now turn to the case when $a\in (0,1)\cup(1,2)$. The computation is almost the same as before. Applying Proposition~\ref{prop:reverse_square_gen} with $a\in (0,1)\cup(1,2)$, 
  it holds that 
  \begin{align*}
    &\int_{\R^2}\Bigl|\sum_{\substack{k\in \Z^{\nu}\\
    |k| \leq C, \; |\vec{\omega}\cdot k|\leq N}}
    e^{2\pi i[\frac{(\vec{\omega}\cdot k)}{N}x + |\frac{\vec{\omega}\cdot k}{N}|^a t]
    }\widehat{g}(\vec{\omega}\cdot k)\eta_{I_{N^a}}(x)\eta_{[0,N^a]}(t)\Bigr|^4\\
    \lesssim&\int_{\R^2}\Bigl(\sum_{n\in \Z}\Bigl|
    \Bigl[\sum_{\substack{k\in \Z^{\nu}\\
    |k| \leq C, \; |\vec{\omega}\cdot k|\leq N}}
    e^{2\pi i[\frac{(\vec{\omega}\cdot k)}{N}x + |\frac{\vec{\omega}\cdot k}{N}|^a t]
    }\widehat{g}(\vec{\omega}\cdot k)\eta_{I_{N^a}}(x)\eta_{[0,N^a]}(t)\Bigr]_{\theta_n}\Bigr|^2\Bigr)^2
  \end{align*}
  where for $F\in \mathcal{S}(\R^2)$ and $n\in \Z$, we have 
  defined $\mathcal{F}_{\R^2}[F_{\theta_n}] = \mathcal{F}_{\R^2}[F]\cdot 
  1_{[N^{-\frac 1a}(n - 1), N^{-\frac 1a}(n + 1)]\times \R}$. Then by the boundedness of the Hilbert transform, the rescaling $x\to Nx$, $t\to N^{a}$, and Lemma~\ref{lemma:weighted_lp}, we obtain 
  \begin{equation*}
    \|e^{2\pi i t|D_{x}|^{a}}P_{N}R_C f(x)\eta (t)\|_{L^4_{t}\mathcal{L}^{4}_{x}(\R \times \R)} 
    \lesssim
    \Bigl(\sum_{n\in \Z}\|e^{2\pi i t|D_{x}|^{a}}P_{I_n}R_C f(x)\eta(t)\|^2_{L^4_{t}\mathcal{L}^{4}_{x}([0,1] \times \R)}  \Bigr)^{\frac 12}
  \end{equation*}
  where $\widehat{P_{I_n}f}
  := 1_{\vec{\omega}\cdot k\in 
  [N^{1- \frac{a}{2}}(n - 1),N^{1- \frac{a}{2}}(n + 1)]\cap [-N, N]}
  \widehat{f}$. We need to count the number of lattice points in 
  \[\{k\in \Z^\nu \; ; \; |k|\sim C, \; 
  \vec{\omega}\cdot k\in [N^{1- \frac{a}{2}}(n - 1),N^{1- \frac{a}{2}}(n + 1)]\}. \]
  Noting that this set is covered by a box with dimensions $N^{1-\frac{a}{2}} \times \underbrace{C \times \cdots \times C}_{\nu -1}$, we have the following
  \[\#\{k\in \Z^\nu \; ; \; |k|\sim C, \; 
  \vec{\omega}\cdot k\in [N^{1- \frac{a}{2}}(n - 1),N^{1- \frac{a}{2}}(n + 1)]\}
  \lesssim_{\varepsilon}N^{1-\frac{a}{2}}C^{\nu - 1} + 1. \]
  Finally, applying the Bernstein type inequality (Lemma~\ref{lemma:Bernstein}), we obtain the desired estimates. 
\end{proof}

\begin{proof}[Proof of Theorem~\ref{thm:endpoint_schippa}]
  Now, we are ready to prove Theorem~\ref{thm:endpoint_schippa}. 
  Noting that $N \lesssim_{\vomega}C$, from Lemma~\ref{lem:LP_Schippa}, Proposition~\ref{proposition:fractional_schrodinger}
  becomes the following form: 
  \[
  \|e^{2\pi i t|D_{x}|^{a}}R_C f(x)\|_{L^4_{t}\mathcal{L}^{4}_{x}([0,1] \times \R)} 
    \lesssim C^{\frac{\nu - 1}{4} + \sigma_a}\|R_{C}f(x)\|_{\mathcal{L}^2_x (\R)}. 
  \]
  Then 
  \begin{align*}
    \|e^{2\pi i t|D_{x}|^{a}}f(x)\|_{L^4 _{t}\mathcal{L}^{4}_x([0,1]
  \times \R)} &\lesssim \Bigl(\sum_{C:\;\mathrm{dyadic}}\|e^{2\pi i t|D_{x}|^{a}}
  R_{C}f(x)\|^2_{L^4 _{t}\mathcal{L}^{4}_x([0,1]\times \R)}\Bigr)^{\frac 12}\\
  &\lesssim \Bigl(\sum_{C:\; \mathrm{dyadic}}C^{\frac{\nu -1}{2} + 2\sigma_a}\|R_{C}f(x)\|^{2}_{\mathcal{L}^2_{x}(\R)}\Bigr)^{\frac 12}\\
  &\sim \|f\|_{\mathcal{H}^{\frac{\nu - 1}{4} + \sigma_a}_{\vomega}(\R)}. 
  \end{align*}
  Finally, applying the density argument, we obtain the inequality \eqref{eq:endpoint_schippa}
  with $f\in \mathcal{H}^{s}_{\vomega} (\R)$ ($s \geq \frac{\nu - 1}{4} + \sigma_a$). 
\end{proof}

\begin{center}
    {\textsc{Declaration of}} {\large AI} {\textsc{Usage}}
    \par
\end{center}
The author used OpenAI's ChatGPT Plus to check portions of the proofs of Lemma~\ref{lem:littlewood_paley}, Lemma~\ref{lemma:weighted_lp}, and the Case 2-1 of Lemma~\ref{lemma:additive_energy}. ChatGPT Plus and DeepSeek were also used while the author was writing the manuscript for assistance with English proofreading, TeX syntax, and finding errors. The ideas, the proofs, and the actual writing of the manuscript are entirely the work of the author, who assumes full responsibility for the content. 

\bibliographystyle{abbrv}
\bibliography{ref}

\bigskip

\noindent\textit{Email}: inami@sustech.edu.cn

\noindent Department of Mathematics, Southern University of Science and Technology, Shenzhen, 518055, China

\end{document}